\newtheorem{theorem}{Theorem}[section]
\newtheorem{corollary}[theorem]{Corollary}
\newtheorem{lemma}[theorem]{Lemma}
\newtheorem{proposition}[theorem]{Proposition}
\theoremstyle{remark}
\definecolor{dp}{RGB}{255,0,0}
\definecolor{kb}{RGB}{0,255,0}
\definecolor{tj}{RGB}{255,0,255}
\numberwithin{equation}{section}
\numberwithin{theorem}{section}
\numberwithin{figure}{section}
\newcommand{\rf}[1]{\eqref{#1}}
\newcommand{\bbfR}{{\mathbb R}}
\newcommand{\bbfN}{{\mathbb N}}
\newcommand{\ve}{{\varepsilon }}
\newcommand{\R}{{\mathbb R}}
\date{\rule{0pt}{15pt}\today}
\newcommand{\Rd}{{{\mathbb R}^d}}
\newcommand{\RR}{\mathbb{R}}
\newcommand{\NN}{\mathbb{N}}
\newcommand{\tEe}{{\tilde{\mathcal E}}}
\DeclareMathOperator{\supp}{supp}
\def \eps{\varepsilon}
\def \tp{\tilde{p}}
\title{Fractional Laplacian with Hardy potential}
\author[K. Bogdan]{Krzysztof Bogdan}
\address{Faculty of Pure and Applied Mathematics,
Wroc\l aw University of Science and Technology,
Wyb. Wyspia\'nskiego 27, 50-370 Wroc\l aw, Poland}
\email{krzysztof.bogdan@pwr.edu.pl}
\author[T. Grzywny]{Tomasz Grzywny}
\email{tomasz.grzywny@pwr.edu.pl}
\author[T. Jakubowski]{Tomasz Jakubowski}
\email{tomasz.jakubowski@pwr.edu.pl}
\author[D. Pilarczyk]{Dominika Pilarczyk}
\email{dominika.pilarczyk@pwr.edu.pl}
\thanks{The first and the second authors were partially supported by the NCN grant 2014/14/M/ST1/00600. The third author was supported by the NCN grant 2015/18/E/ST1/00239}
\subjclass[2010]{Primary 47D08, 60J35; Secondary 31C05, 46E35}
\keywords{fractional Laplacian, Hardy inequality, heat kernel}
\begin{document} 

\maketitle

\begin{abstract}
We give sharp two-sided estimates of the semigroup generated by the fractional Laplacian plus the Hardy potential on $\Rd$, including the case of the critical constant. We use Davies' method  back-to-back with a new method of integral analysis of Duhamel's formula.
\end{abstract}

\section{Main result and Introduction}
Let $d\in \bbfN:=\{1,2,\ldots\}$, $\alpha\in (0,2)$ and $\alpha<d$. We 
consider the following Schr\"odinger operator 
on $\Rd$ 
with the Hardy  potential,
   \begin{equation}
   \label{eq:SchrOp}
   \Delta^{\alpha/2}+\kappa|x|^{-\alpha}.
   \end{equation} 
Here $\Delta^{\alpha/2}$ is the fractional Laplacian  and 
 $\kappa$ is a positive number.
Let
\begin{equation*}
   \kappa^*=\frac{2^{\alpha} \Gamma((d+\alpha)/4)^2 }{\Gamma((d-\alpha)/4)^{2}}.
\end{equation*}
This is the best constant in the Hardy inequality for the quadratic form of $\Delta^{\alpha/2}$:
\begin{equation}\label{eq:Hi}
\mathcal E [f]\ge \int_\Rd f(x)^2\kappa^* |x|^{-\alpha}dx,\qquad f\in L^2(\Rd),
\end{equation}
see below for definitions;  see Herbst \cite[Theorem~2.5]{MR0436854}, Frank and Seiringer \cite[Theorem~1.1]{MR2469027} and Bogdan, Dyda and Kim \cite[Proposition~5]{MR3460023} for the result. 
If $0<\kappa\le \kappa^*$, then there is  a unique number $\delta$ such that 
$$
0<\delta\leq (d-\alpha)/2 \quad \text{ and } \quad
\kappa=\frac{2^\alpha\Gamma(\frac{\delta+\alpha}{2}) \Gamma(\frac{d-\delta}{2})}{\Gamma(\frac{\delta}{2})\Gamma(\frac{d-\delta-\alpha}{2})},
$$
see Figure~\ref{fig:k}. Our main theorem is as follows.
\begin{theorem}\label{thm:main}
Let $0<\alpha<2\wedge d$. If $0<\kappa\leq \kappa^*$, then the Schr\"odinger operator \eqref{eq:SchrOp} has the heat kernel $\tilde{p}$ which is jointly continuous on $(0,\infty)\times (\Rd\setminus\{0\})^2$ and satisfies
\begin{equation*}
\label{eq:mainThmEst}
\tilde{p}(t,x,y)\approx \left(1+t^{\delta/\alpha}|x|^{-\delta}  \right)\left(1+t^{\delta/\alpha}|y|^{-\delta}  \right) \left( t^{-d/\alpha}\wedge \frac{t}{|x-y|^{d+\alpha}} \right),\quad x,y\neq0,\ t>0.
\end{equation*}
\end{theorem}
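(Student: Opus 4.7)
The plan is to prove the upper and lower bounds of the displayed two-sided estimate separately, combining the two techniques advertised in the abstract. The structural fact that drives everything is that $h(x):=|x|^{-\delta}$ satisfies, on $\Rd\setminus\{0\}$, the pointwise identity $(\Delta^{\alpha/2}+\kappa|x|^{-\alpha})h=0$; indeed, the classical formula $\Delta^{\alpha/2}|x|^{-\delta}=-c(d,\alpha,\delta)|x|^{-\delta-\alpha}$ with exactly the same ratio of $\Gamma$-functions as in the statement is what singles out $\delta$ from $\kappa$. Thus $h$ plays the role of a ground state for \eqref{eq:SchrOp}, and the correction $H(t,x):=1+t^{\delta/\alpha}|x|^{-\delta}$ is the parabolic combination of the constant harmonic $1$ and the singular harmonic $h$, saturated at the heat scale $|x|\sim t^{1/\alpha}$. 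The target estimate $\tilde p(t,x,y)\approx H(t,x)H(t,y)p(t,x,y)$, with $p$ the free fractional-Laplacian heat kernel, then has a clean structural meaning, and the proof splits into controlling this multiplicative correction from above and below.

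For the upper bound, the natural route is Davies' method in the form of a ground-state transform. Consider the (formally) sub-Markovian semigroup $Q_t f:=h^{-1}\tilde P_t(hf)$ on $L^2(\Rd, h^2\,dx)$, whose Dirichlet form has an explicit jump representation obtained by weighting the carré-du-champ of $\Delta^{\alpha/2}$ by $h$. A Nash-type inequality on this weighted space, combined with the natural parabolic scaling, yields on-diagonal ultracontractivity bounds for $Q_t$, which translate back to $\tilde p(t,x,y)\lesssim H(t,x)H(t,y)\,t^{-d/\alpha}$ after cutting $h$ off at the saturation value $t^{-\delta/\alpha}$. Off-diagonal decay of type $t|x-y|^{-d-\alpha}$ then follows from the standard nonlocal Davies perturbation argument: insert bounded exponential weights, decompose the jump kernel into small and large jumps in Meyer's fashion, and run a Grönwall argument on the weighted norm.

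For the lower bound, I would analyze Duhamel's identity
\[
\tilde p(t,x,y) \;=\; p(t,x,y)+\kappa\int_0^t\!\!\int_\Rd \tilde p(s,x,z)\,|z|^{-\alpha}\,p(t-s,z,y)\,dz\,ds
\]
starting from the trivial a priori bound $\tilde p\ge p$. Substituting this into the right-hand side and estimating the inner integral carefully — by splitting the $z$-domain into the regions $|z|\lesssim s^{1/\alpha}$ and $|z|\gtrsim s^{1/\alpha}$ and using the explicit two-sided bounds for $p$ — produces, whenever $|x|\ll t^{1/\alpha}$, an extra multiplicative factor of order $t^{\delta/\alpha}|x|^{-\delta}$ on top of $p(t,x,y)$. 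Symmetry of $\tilde p$ in $(x,y)$ then delivers the corresponding factor in $y$, yielding the full $H(t,x)H(t,y)p(t,x,y)$ lower bound. This quantitative lower-bounding of the Duhamel integral is the ``new method of integral analysis of Duhamel's formula'' referred to in the abstract, and joint continuity on $(0,\infty)\times(\Rd\setminus\{0\})^2$ is a by-product once the two-sided estimates are in place.

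The main obstacle is the critical case $\kappa=\kappa^*$, equivalently $\delta=(d-\alpha)/2$. Here $h$ sits just at the edge of local square integrability at the origin; the weighted Nash inequality that underpins Davies' method degenerates exactly in the way that the fractional Hardy inequality \eqref{eq:Hi} is saturated; and the Duhamel integrals become logarithmically sensitive near $z=0$, so the naive pointwise bootstrapping breaks. Implementing both methods uniformly up to $\kappa=\kappa^*$, producing matching constants on the two sides, and verifying that the same exponent $\delta$ emerges from the upper and lower analyses — rather than, say, an accidental overshoot to $e^{c t^{\delta/\alpha}|x|^{-\delta}}$ from unchecked iteration of Duhamel — is where most of the technical effort will go.
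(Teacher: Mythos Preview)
Your high-level picture is right --- $h(x)=|x|^{-\delta}$ is the ground state, $H(t,x)=1+t^{\delta/\alpha}h(x)$ is the correct parabolic weight, and the target estimate is $\tilde p\approx H\otimes H\cdot p$ --- but you have largely \emph{swapped the roles} of the two methods relative to the paper, and your lower-bound argument has a genuine gap.

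\textbf{Lower bound.} Your plan is to feed $\tilde p\ge p$ once into Duhamel and claim the inner integral produces a factor $t^{\delta/\alpha}|x|^{-\delta}$. It does not: for $|x|\ll t^{1/\alpha}$ and $|y|\sim t^{1/\alpha}$ the single-iteration gain
\[
\kappa\int_0^t\!\!\int_{\Rd} p(s,x,z)\,|z|^{-\alpha}\,p(t-s,z,y)\,dz\,ds
\]
is of order $p(t,x,y)\log(t|x|^{-\alpha})$, not $p(t,x,y)\,|x|^{-\delta}$ (integrate $\int p(s,x,z)|z|^{-\alpha}dz\approx (s^{1/\alpha}\vee|x|)^{-\alpha}$ in $s$). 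To reach the exponent $\delta$ you would need the full perturbation series, i.e.\ essentially $\tilde p$ itself, so the bootstrap is circular. The paper does the opposite: it proves the \emph{upper} bound first, then obtains the lower bound by Chapman--Kolmogorov chaining of the conditioned kernel $q(t,x,y)=\tilde p(t,x,y)/(H(x)H(y))$, using the invariance $\tilde P_t h=h$ to show $q$ has uniformly positive mass on an annulus and then propagating the free lower bound $p$ through two or three semigroup steps.

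\textbf{Upper bound and the integral analysis.} In the paper the Duhamel (``integral'') analysis is the engine for the \emph{upper} bound, not the lower. Its first job is to prove that $h$ is genuinely $\tilde p$-invariant and that $H$ is \emph{nearly supermedian and bounded below}: $\int\tilde p(t,x,y)H(t,y)\,dy\le (M+1)H(t,x)$. This is what makes Davies' method run: Milman--Semenov's desingularizing-weight theorem needs exactly such a $\varphi$ with $\varphi,\varphi^{-1}\in L^2_{\mathrm{loc}}$, which $h$ alone does \emph{not} satisfy. So even your Davies' route is not self-contained --- it rests on the integral analysis. The Sobolev input is the Frank--Lieb--Seiringer inequality on $L^2(\Rd)$, not a Nash inequality on $L^2(h^2\,dx)$; and the off-diagonal decay is obtained not by exponential weights/Meyer decomposition (which would have to be reworked in the presence of the singular potential) but again by a direct Duhamel argument splitting $\{|z|<R\}$ versus $\{|z|\ge R\}$ with $R\sim(2\kappa)^{1/\alpha}$.

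\textbf{Continuity.} This is not a by-product of the two-sided estimates; the paper proves it separately from Duhamel's formula and dominated convergence, using the upper bound as a majorant. Finally, in the critical case $\delta=(d-\alpha)/2$ the paper does not face a logarithmic blow-up in Duhamel but rather a limiting argument: it establishes \eqref{eq:1} for all subcritical $\beta$ uniformly (via the $\tilde p^*$-majorant) and then lets $\beta\uparrow\delta^*$ under a dominated-convergence majorant $|y|^{-\delta}+1$.
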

Here and below
we write $f \approx g$ if $f,g\ge 0$ and $c^{-1} g \le f \le c g$
for some positive number $c$ (comparability
constant). Such estimates are called sharp or two-sided.
We call $\kappa$, and $\Delta^{\alpha/2}+\kappa|x|^{-\alpha}$, subcritical if $0<\kappa<\kappa^*$, critical  if $\kappa=\kappa^*$ and supercritical  if $\kappa>\kappa^*$.

The subject of the paper can be tracked down to Baras and Goldstein \cite{MR742415}, who proved the existence of nontrivial nonnegative solutions of the classical heat equation
$\partial_t=\Delta+\kappa|x|^{-2}$ in $\Rd$ for $0\le \kappa\le (d-2)^2/4$, and nonexistence of such solutions, that is explosion, for bigger constants $\kappa$.
Vazquez and Zuazua \cite{MR1760280} studied the Cauchy problem and spectral properties of the operator in bounded subsets of $\Rd$ using the improved Hardy-Poincar\'e inequality, and they used weighted Hardy-Poincar\'e inequality in the
more delicate case of the whole of $\Rd$.
Sharp upper and lower bounds for the heat kernel of the Schr\"odinger operator $\Delta+\kappa|x|^{-2}$ were obtained by Liskevich and Sobol \cite[p.~365, Example 3.8, 4.4 and 4.10]{MR1953267}  for $0<\kappa< (d-2)^2/4$. 
Milman and Semenov proved the upper and lower
bounds  
for $\kappa\le (d-2)^2/4$, see 
\cite[Theorem 1]{MR2064932}, \cite{MR2180080} and \cite[Section 10.4]{MR2218016}. Note that
Moschini and Tesei \cite[Theorem 3.10]{MR2328115} gave estimates for the subcritical case  in bounded domains, and
Filippas, Moschini and Tertikas \cite{MR2308757} obtained the critical case  in bounded domains. 
Recently Metafune, Sobajima and Spina \cite{Metafune2017} extended the results of Milman and Semenov to sigular  gradient-and-Schr\"odinger perturbations of $\Delta$.
Because of the borderline singularity  of the function
$\Rd\ni x\mapsto\kappa |x|^{-2}$   at the origin, the choice of $\kappa$ influences the growth rate of the heat kernel at the origin. 

The operators $\Delta+\kappa |x|^{-2}$ play distinctive roles in limiting and self-similar phenomena in probability \cite{2017-LM-EP} and partial differential equations \cite{MR3020137}. This results in part from the scaling of the corresponding heat kernel, which is similar to that of the Gauss-Weierstrass kernel. Analogous applications are expected for the Hardy perturbations of the fractional Laplacian, see \cite{2017-DP-hjm} for first attempts.
We also note that the effect of Schr\"odinger perturbations of $\Delta$ is much milder if $\kappa|x|^{-2}$ is replaced by  functions 
in appropriate Kato classes. 
We refer to  Bogdan and Szczypkowski \cite[Section~1, 4]{MR3200161} for references and results on  Gaussian bounds for Schr\"odinger heat kernels along with an approach based on the so-called 4G inequality. The case when even the Gaussian constants do not deteriorate is discussed by Bogdan, Dziuba\'nski and Szczypkowski \cite{2017arXiv170606172B}. 
Estimates of Schr\"odinger perturbations of general operators and their transition semigroups are given by Bogdan, Hansen and Jakubowski in \cite{MR2457489}, with focus on situations with 3G inequality. Bogdan, Jakubowski and Sydor \cite{MR3000465} and  Bogdan, Butko and Szczypkowski \cite{MR3514392} estimate Schr\"odinger perturbations of integral kernels which are not necessarily semigroups.
The results of these paper give comparability or near comparability of the perturbed kernel with the original one. In fact, the explicit estimate in \cite[Theorem~3]{MR3000465} suffices for many applications.

The 
operator \eqref{eq:SchrOp}
cannot be handled by the methods of these papers  -- as we see in Theorem~\ref{thm:main}, the heat kernel of $\Delta^{\alpha/2}+\kappa|x|^{-\alpha}$ has  different growth than the heat kernel of $\Delta^{\alpha/2}$.
It is proved 
by Abdellaoui, Medina, Peral and Primo
 \cite{MR3492734, MR3479207} that for $\kappa>\kappa^*$ the operator has no weak positive supersolutions and the phenomenon of complete and instantaneous blow-up occurs, whereas for $\kappa\le \kappa^*$ nontrivial nonnegative solutions of \eqref{eq:SchrOp} exists and have certain Sobolev regularity.
Concerning the estimates of the heat kernel of $\Delta^{\alpha/2}+\kappa|x|^{-\alpha}$,
the main contributions to date are by Frank, Lieb and Seiringer
\cite[Proposition~5.1]{MR2425175}
and BenAmor \cite[Theorem~3.1]{2016arXiv160601784B}.
 The first paper gives estimates of the heat kernel of  the 
 operator $|x|^{-\beta}\left(\Delta^{\alpha/2}+\kappa|x|^{-\alpha}-1\right)$, with suitable $\beta>0$. These, however, do not translate directly into heat kernel estimates  for
$\Delta^{\alpha/2}+\kappa|x|^{-\alpha}$.
The second paper \cite{2016arXiv160601784B} gives the upper bound for 
the heat kernel of $\Delta+\kappa|x|^{-\alpha}$
with the Dirichlet conditions on bounded open subsets of $\bbfR^d$. This case is 
qualitatively different from the case of the whole of $\Rd$ because available supermedian  functions used for Doob's conditioning in Davies' method are  locally, but not globally bounded from below.
In the classical setting of $\Delta$, the distinction is  known since \cite{MR1760280}, \cite{MR2328115} and \cite{MR2034290}. In fact Milman and Semenow \cite[Theorem~A]{MR2064932} give a general framework for heat kernel estimates, with the lower-boundedness of the nearly supermedian function being an important assumption. 

In the present paper we show that one can use $x\mapsto |x|^{-\delta}+1$ to estimate $\tp$.
To prove Theorem~\ref{thm:main} 
we develop new integral analysis of the perturbation (Duhamel's) formula using this function.
The integral analysis does not use quadratic forms, but the reader may ask a legitimate question, which we asked ourselves,
whether the usual method based on quadratic forms, namely Davies' method, can be employed to the same end.
The outcome of our integral analysis was announced at the conference Probability and Analysis in B\c{e}dlewo, 15-19 May 2017, but we struggled long to apply Davies' method in  the critical case.
We are now able to present both methods back-to-back in the full range of $\kappa$. This adds a new approach to the toolbox of heat kernel estimates and  may also shed  some light on Davies' method, especially  concerning the domain of the  quadratic form of $\Delta^{\alpha/2}+\kappa^*|x|^{-\alpha}$. We note that the integral analysis automatically gives the continuity of the heat kernel and the estimates hold everywhere, rather than $a.e.$. The reader may even be surprised to see by inspection how smooth this works in the critical case.

On a general level we use
Doob-type conditioning both  for the integral analysis of the heat kernel and in the analysis of its quadratic form.
In doing so we rely on the explicit construction of supermedian functions proposed by Bogdan, Dyda and Kim \cite{MR3460023}. In fact, for $\Delta^{\alpha/2}$ we refine the findings of \cite{MR3460023}, to prove that  the function $|x|^{-\delta}$ is invariant (a ground state) for $\tilde{p}$. Then we prove that
$|x|^{-\delta}+1$ is nearly supermedian for $\tp$,
see Corollary~\ref{cor:sh}.
The latter function is bounded from below, which is 
crucial 
for both methods of analysis of $\tp$ presented in this paper. The importance of such functions for 
Davies' method is known at least since Milman and Semenov \cite[Theorem~A]{MR2064932}.

The structure of the paper is as follows. In Section~\ref{sec:prel} we define the heat kernel $p$ of $\Delta^{\alpha/2}$ and its Schr\"odinger perturbation $\tp$ by the Hardy potential $q(x)=\kappa|x|^{-\alpha}$. In Section~\ref{sec:ia} we analyze auxiliary  integrals, e.g. those of the form $\int_{\Rd} \tp(t,x,y)|y|^{-\beta}dy$ and use them to control $\tp$. The main results of this section are Theorem \ref{thm:1} and Proposition  \ref{lem:int_tpinfty} on nearly supermedian functions, summarized in Corollary~\ref{cor:sh}. In Section~\ref{sec:pt1} we prove Theorem \ref{thm:main}. We also reprove the instantaneous blow-up: $\tp\equiv\infty$ in the supercritical case $\kappa>\kappa^*$. In Section~\ref{sec:P} we discuss the quadratic form $\tilde{\mathcal{E}}$ of $\tp$ and  
 apply Davies' method for the upper bound of $\tp$ in Section~\ref{sec:Dm}. 

\section*{Acknowledgements}
We thank Rupert Frank for detailed comments on \cite{MR2425175}. We thank Rodrigo Ba\~nuelos, Bart\l{}omiej Dyda, Jerome Goldstein, Martin Hairer, Panki Kim, Giorgio Metafune and Edwin Perkins for helpful discussions.

\section{Preliminaries}\label{sec:prel}

Our setting is as follows. Let 
$$
\nu(y)
=\frac{ \alpha 2^{\alpha-1}\Gamma\big((d+\alpha)/2\big)}{\pi^{d/2}\Gamma(1-\alpha/2)}|y|^{-d-\alpha}\,,\quad y\in \Rd\,.
$$
The 
coefficient  above is chosen in such a way that 
\begin{equation}
  \label{eq:trf}
  \int_{\Rd} \left[1-\cos(\xi\cdot y)\right]\nu(y)dy=|\xi|^\alpha\,,\quad
  \xi\in \Rd\,.
\end{equation}
The fractional Laplacian for (smooth compactly supported) test functions $\varphi\in C^\infty_c(\Rd)$ is
\begin{equation*}
  \Delta^{\alpha/2}\varphi(x) = 
  \lim_{\varepsilon \downarrow 0}\int_{|y|>\varepsilon}
  \left[\varphi(x+y)-\varphi(x)\right]\nu(y)dy\,,
  \quad
  x\in \Rd\,.
\end{equation*}
In terms of the Fourier transform \cite[Section~1.1.2]{MR2569321},
$\widehat{\Delta^{\alpha/2}\varphi}(\xi)=-|\xi|^{\alpha}\hat{\varphi}(\xi)$.

We use ``:='' to indicate definitions, e.g. $a\land b := \min\{a, b\}$ and $a\vee b := \max\{a, b\}$. 
In statements and proofs we let $c_i$ denote constants whose exact values are unimportant. 
 These are determined anew
in each statement and each proof. We only consider Borel measurable  functions.

\subsection{The semigroup of $\Delta^{\alpha/2}$}
We consider the convolution semigroup of functions
\begin{equation}
  \label{eq:dpt}
  p_t(x):=\frac{1}{(2\pi)^d}\int_ \Rd e^{-t|\xi|^\alpha}e^{-ix\cdot\xi}\,dx\,,\quad\ t>0,\ x\in
  \Rd\,.
\end{equation}
According to 
(\ref{eq:trf}) 
and the L\'evy-Khinchine formula,
each $p_t$ is a probability density function and $\nu(y)dy$ is the L\'evy measure of the semigroup, 
see, e.g., 
\cite{MR2569321}.
From (\ref{eq:dpt}) we have 
\begin{equation}
  \label{eq:sca}
  p_t(x)=t^{-\frac{d}{\alpha}}p_1(t^{-\frac{1}{\alpha}}x)\,.
\end{equation}
It is well-known  \cite{MR2569321} that 
$p_1(x)
\approx1\land |x|^{-d-\alpha}$, hence
\begin{equation}\label{eq:oppt}
p_t(x)
\approx t^{-d/\alpha}\land \frac{t}{|x|^{d+\alpha}}
\,,\quad t>0,\,x\in \Rd\,.
\end{equation}
We denote 
$$
p(t,x,y)=p_t(y-x), \quad t>0,\ x,y\in \Rd.
$$
Clearly, $p$ is symmetric,
\begin{equation*}\label{pt}
   p(t,x, y) = p(t,y, x) \,,
\end{equation*}   
and satisfies the Chapman-Kolmogorov equations:
\begin{equation*}
     \int_{\Rd} p(s,x, y)p(t,y, z)dy = p(s+t,x, z), \quad x, z \in  \Rd,\, s, t > 0.
\end{equation*}     
We denote, as usual, $P_t g(x) = \int_{\Rd} p(t,x, y) g(y){\rm d}y$.
The fractional Laplacian extends to the generator of the semigroup $\{P_t\}_{t\geq0}$ on many Banach spaces, see, e.g., \cite{2015arXiv150707356K}.

\subsection{Schr\"odinger perturbations by Hardy potential}
For $\alpha < d$ and $\beta \in (0, d)$, 
let 
$$f(t) = c_1t^{(d-\alpha -\beta)/\alpha },\quad t>0.$$ 
Here $c_1\in (0, \infty)$ is a normalizing constant so chosen that 
\begin{equation}\label{h_beta}
   h_\beta(x) := \int_0^\infty f(t) p_t(x) {\rm d}t =|x|^{-\beta}, \quad x\in \Rd.
\end{equation}
The existence of such $c_1$ is a consequence of \eqref{eq:sca}.
The exact value of $c_1$ does not affect the subsequent definition of $q_\beta$.
For $\beta \in (0, d-\alpha)$ we let
\begin{equation*}\label{def_q}
     q_\beta(x) = \frac{1}{h_\beta(x)}\int_0^{\infty }f'(t) p_t(x) {\rm d}t,\quad x\in \Rd,
\end{equation*}
By \cite[Section~4]{MR3460023}\footnote{Note that the exponent $(d-\alpha -\beta)/\alpha$ in the definition of $f$ is denoted $\beta$ in \cite[Corollary~6]{MR3460023}.},
\begin{equation*}\label{q_beta}
  q_\beta(x) = \kappa_\beta |x|^{-\alpha },
\end{equation*}
where
$$
\kappa_\beta=\frac{2^\alpha\Gamma(\frac{\beta+\alpha}{2}) \Gamma(\frac{d-\beta}{2})}{\Gamma(\frac{\beta}{2})\Gamma(\frac{d-\beta-\alpha}{2})}.
$$
Put differently, 
\begin{equation}\label{eq:toz}
\kappa_\beta |x|^{-\alpha-\beta} = \int_0^\infty f'(t) p_t(x)  {\rm d}t.
\end{equation}
In  \cite{MR3460023} the above objects have applications to both in the integral test for Schr\"odinger semigroups  \cite[Theorem~1]{MR3460023}
and in Hardy inequality for Dirichlet forms  \cite[Theorem~2]{MR3460023}. Similarly
they play a double  role here:
in 
the integral analysis method and in Davies' method. We further note that
 \eqref{eq:toz} is an integral analogue of \cite[(3.6)]{2017arXiv170503310B}.
The function $\beta \mapsto \kappa_\beta$ is increasing on $(0,(d-\alpha)/2]$ and decreasing on $[(d-\alpha)/2,d-\alpha)$, see  \cite[Proof of Proposition~5]{MR3460023}. Furthermore, $\kappa_\beta = \kappa_{d-\alpha-\beta}$. The maximal or {\it critical} value of $\kappa_\beta$ is $\kappa^*=\kappa_{(d-\alpha)/2}$.
For convenience, let $\kappa_0 = 0$.
In what follows, we fix $\delta \in [0,(d-\alpha)/2]$. Let
\begin{equation}\label{eq:dk}
h(x)=h_\delta(x)=|x|^{-\delta},\quad 
\kappa = \kappa_\delta,\quad
q(x) = q_\delta(x)=\kappa |x|^{-\alpha}.
\end{equation}
The notation will be used throughout the paper.
\begin{figure}
\caption{The function $\beta\mapsto \kappa_\beta$.}
{\protect\label{fig:k}}
\includegraphics[width=8cm]{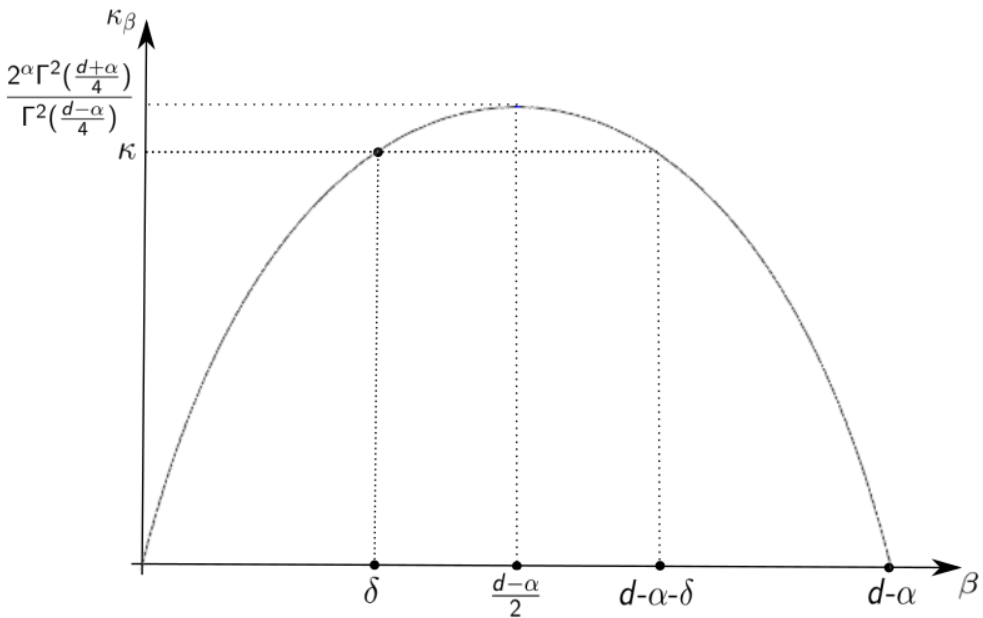}
\end{figure}
Also, let $\delta^*=(d-\alpha)/2$ and $q^*=q_{\delta^*}$.

We define the Schr\"odinger perturbation of $p$ by $q=q_\delta$:
\begin{equation}\label{def_p_tilde}
   \tilde p = \sum_{n=0}^{\infty }p_n.
\end{equation}
Here for $t>0$ and $x,y\in \Rd$ we let $p_0(t,x, y) = p(t,x, y)$ and
\begin{align}\label{pn}
     p_n(t,x, y) &= \int_0^t \int_{\Rd} p(s,x, z) q(z)p_{n-1}(t-s, z, y)  {\rm d}z  {\rm d}s\\
     &=\int_0^t \int_{\Rd} p_{n-1}(s,x, z) q(z)p(t-s, z, y)  {\rm d}z  {\rm d}s
     , \quad n \geq 1.\notag
\end{align}     
If $q=q^*$, then we may write $\tp^*$ for $\tp$. 
From the general theory \cite{MR3460023}, $\tilde p$ is a symmetric transition density, i.e. the following Chapman-Kolmogorov equation holds:
\begin{align}\label{eq:ChK}
\int_\Rd \tp(s,x,z) \tp(t,z,y)\, dz = \tp(t+s,x,y)\,.
\end{align} 
Clearly, 
the following Duhamel's formula holds,
\begin{align}\label{eq:Df}
\tp(t,x,y) &= p(t,x,y) + \int_0^t \int_{\bbfR^d} p(s,x,z) q(z) \tp(t-s,z,y) dz ds\\
&= p(t,x,y) + \int_0^t \int_{\bbfR^d} \tp(s,x,z) q(z) p(t-s,z,y) dz ds,\quad t>0,\ x,y\in \bbfR^d. \label{eq:Df2}
\end{align}
The finiteness of $\tilde p(t,x, y)$ for all $t>0$, $x\neq 0$ and $a.e.$~$y\in \Rd$ is secured as follows.
\begin{lemma}\cite[Theorem~1]{ MR3460023} \label{pt_h}
We have $\int_{\Rd} \tilde p(t,x, y)h(y){\rm d}y \leq  h(x)$.
\end{lemma}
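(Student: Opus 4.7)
The plan is to extract from the definitions a Duhamel-type identity for $h$ itself and then compare it, term by term, with the perturbation series defining $\tp$. The key structural inputs are the paired representations
\[
h(x) = \int_0^\infty f(u)\, p_u(x)\, du, \qquad q(x) h(x) = \int_0^\infty f'(u)\, p_u(x)\, du,
\]
with $f(u) = c_1 u^{(d-\alpha-\delta)/\alpha}$, so that $f(0) = 0$, $f' \ge 0$, and both $f(u) p_u(x)$ and $f'(u) p_u(x)$ are integrable in $u$ on $(0,\infty)$ for each $x \ne 0$ (as follows from $0 < \delta < d - \alpha$ and \eqref{eq:oppt}).

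The main step is the identity
\begin{equation}\label{eq:planstar}
h(x) = P_t h(x) + \int_0^t \!\!\int_{\Rd} p(t-s, x, z)\, q(z) h(z)\, dz\, ds, \qquad t > 0,\ x\ne 0.
\end{equation}
For the first term, the semigroup property $\int p(t, x, y) p_u(y)\, dy = p_{t+u}(x)$ together with Fubini (legitimate by nonnegativity) gives $P_t h(x) = \int_0^\infty f(u) p_{t+u}(x)\, du$, so after a change of variable
\[
h(x) - P_t h(x) = \int_0^t f(u) p_u(x)\, du + \int_t^\infty [f(u) - f(u-t)] p_u(x)\, du.
\]
For the double integral on the right of \eqref{eq:planstar}, inserting the representation of $qh$, applying Fubini, and using the semigroup identity followed by the substitution $v = t - s + r$ in the inner integral yields
\[
\int_0^t\!\!\int_{\Rd} p(t-s, x, z)\, q(z) h(z)\, dz\, ds = \int_0^\infty f'(r) \int_r^{r+t} p_v(x)\, dv\, dr = \int_0^\infty p_v(x) \int_{(v-t)_+}^{v} f'(r)\, dr\, dv,
\]
and the inner integral equals $f(v)$ for $v \le t$ (using $f(0) = 0$) and $f(v) - f(v-t)$ for $v > t$, matching the previous display term for term. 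This proves \eqref{eq:planstar}.

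Once \eqref{eq:planstar} is in hand, the rest is a monotone induction on the partial sums $S_n(t,x) := \sum_{k=0}^n \int_{\Rd} p_k(t, x, y) h(y)\, dy$. The base case $S_0 = P_t h \le h$ is immediate from \eqref{eq:planstar} and nonnegativity. For the inductive step, \eqref{pn} and a change of variable give
\begin{align*}
S_n(t,x) &= P_t h(x) + \int_0^t \!\! \int_{\Rd} p(t-s, x, z)\, q(z)\, S_{n-1}(s, z)\, dz\, ds \\
&\le P_t h(x) + \int_0^t \!\! \int_{\Rd} p(t-s, x, z)\, q(z) h(z)\, dz\, ds = h(x),
\end{align*}
invoking $S_{n-1} \le h$ and then \eqref{eq:planstar}. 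Passing to the limit via \eqref{def_p_tilde} and monotone convergence yields $\int_{\Rd} \tp(t, x, y) h(y)\, dy = \lim_n S_n(t, x) \le h(x)$.

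The only delicate step is the calculus behind \eqref{eq:planstar}; everything else is monotone bookkeeping, and the argument sidesteps any quadratic-form or distributional calculus for $\Delta^{\alpha/2} + q$. The standing assumption $0 < \delta \le (d-\alpha)/2$ enters only to justify Fubini and the vanishing of boundary terms at $u = 0$ and $u = \infty$ in the integrals involving $f$ and $f'$.
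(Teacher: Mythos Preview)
Your argument is correct. In the paper this lemma is not proved but merely cited from \cite[Theorem~1]{MR3460023}; you have supplied a self-contained proof whose key identity \eqref{eq:planstar} is exactly the case $\beta=\delta$ of the paper's later Lemma~\ref{lem:p_id}, and your induction on the partial sums $S_n$ is the standard mechanism behind the cited result. The paper's proof of Lemma~\ref{lem:p_id} reaches the same identity by writing $\int_0^t\int_0^\infty p_{s+r}(x)f'(r)\,dr\,ds=-\int_0^t\int_0^\infty \partial_s p_{s+r}(x)\,f(r)\,dr\,ds$ and integrating in $s$, whereas you swap the order of integration directly; both routes are equally short, and yours has the minor advantage of avoiding the differentiability of $s\mapsto p_{s+r}(x)$.
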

In the sense of Lemma~\ref{pt_h}, $h$ is supermedian. The scaling  of $\tp$ is the same as that of $p$.
\begin{lemma}\label{ss th}
The kernel $\tilde p$ is self-similar,
\begin{equation}\label{ss form}
  \tilde p(t,x, y)=t^{-\frac{d}{\alpha }}\tilde p\big(1, xt^{-\frac{1}{\alpha }}, yt^{-\frac{1}{\alpha }}\big), \qquad  t>0,  \ x,y \in \bbfR^d.
\end{equation}
\end{lemma}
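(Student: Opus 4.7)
The plan is to prove the scaling identity by induction on $n$ for each term $p_n$ in the Duhamel series \eqref{def_p_tilde}, and then sum. The base case $n=0$ is precisely \eqref{eq:sca}, which says $p(t,x,y)=t^{-d/\alpha}p(1,xt^{-1/\alpha},yt^{-1/\alpha})$.

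The inductive step combines the $n=0$ scaling with the homogeneity of the Hardy potential, $q(\lambda z)=\lambda^{-\alpha}q(z)$ for $\lambda>0$, which is the decisive algebraic input. Assuming $p_{n-1}$ satisfies \eqref{ss form}, I would apply the change of variables $s=tu$ and $z=t^{1/\alpha}w$ in
$$p_n(t,x,y)=\int_0^t\!\!\int_{\Rd}p(s,x,z)\,q(z)\,p_{n-1}(t-s,z,y)\,dz\,ds.$$
The factor $t^{-d/\alpha}$ produced by the scaling of $p$, the factor $t^{-1}$ produced by $q$, the factor $t^{-d/\alpha}$ produced by the scaling of $p_{n-1}$ (via the inductive hypothesis applied to the triple $(t(1-u),t^{1/\alpha}w,y)$), and the Jacobians $ds=t\,du$ and $dz=t^{d/\alpha}\,dw$ collapse to a single $t^{-d/\alpha}$, leaving exactly the integral that defines $p_n(1,xt^{-1/\alpha},yt^{-1/\alpha})$. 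This closes the induction.

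To conclude, sum over $n\geq 0$. Term-by-term summation is justified because each $p_n\geq 0$ and $\sum_n p_n=\tilde p$ is finite (e.g.\ $a.e.$ via Lemma~\ref{pt_h}, which gives $\int \tilde p(t,x,y)h(y)\,dy\leq h(x)<\infty$ for $x\neq 0$). I do not anticipate any real obstacle: the proof is a routine homogeneity argument, and its only content is the careful bookkeeping of the powers of $t$ contributed by $p$, by $q$, and by the two Jacobian factors. The same scheme, incidentally, is exactly why the scaling would fail for potentials with any other homogeneity degree, highlighting the matching of the $|x|^{-\alpha}$ singularity with the $\alpha$-stable time scaling $t\mapsto t^{1/\alpha}$.
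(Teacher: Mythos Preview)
Your proposal is correct and follows essentially the same route as the paper: induction on $n$ for the scaling of each $p_n$, with the base case being \eqref{eq:sca} and the inductive step using the change of variables $s=tu$, $z=t^{1/\alpha}w$ together with the homogeneity $q(t^{1/\alpha}w)=t^{-1}q(w)$. Your explicit remark on summing the nonnegative series is a small addition the paper leaves implicit, but otherwise the arguments coincide.
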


\begin{proof}
By the definition \rf{def_p_tilde} of 
$\tilde p$, it is enough to show that for all $n\geq 0$ and $t>0$,
\begin{equation}\label{ss form2}
    p_n(t,x,y) = t^{-\frac{d}{\alpha }}p_n\big(1, xt^{-\frac{1}{\alpha }}, yt^{-\frac{1}{\alpha }}\big),
\end{equation}
or, equivalently,  for all $n\geq 0$ and $t,u>0$,
\begin{equation*}\label{ss form3}
   p_n(tu,x,y)=t^{-\frac{d}{\alpha }}p_n\big(u, xt^{-\frac{1}{\alpha }}, yt^{-\frac{1}{\alpha }}\big).
\end{equation*} 
By \eqref{eq:sca}, the equality \rf{ss form2} holds true for $n=0$. Let $n \geq 0$ be an integer, and assume that \rf{ss form2} holds for $n$. By this, 
the definition \rf{pn} applied to $n+1$ instead of $n$ and by the change of variables $s = tu$ and $w = zt^{-\frac{1}{\alpha }}$,
\begin{align*}
    p_{n+1}(t,x, y) &= \int_0^t \int_{\bbfR^d} p(s,x, z) q(z)p_n(t-s,z, y)  dz  ds \\
    &= t\int_0^1 \int_{\bbfR^d} p(tu,x, z) q(z)p_n(t(1-u),z, y)  dz  du \\
 & = t^{1-\frac{2d}{\alpha }}\int_0^1 \int_{\bbfR^d} p(u,xt^{-\frac{1}{\alpha }}, zt^{-\frac{1}{\alpha }}) q(z)p_n(1-u,zt^{-\frac{1}{\alpha }}, yt^{-\frac{1}{\alpha }})  dz  du \\
    & = t^{-\frac{d}{\alpha }}\int_0^1 \int_{\bbfR^d} p(u,xt^{-\frac{1}{\alpha }}, w) \kappa |w|^{-\alpha } p_n(1-u,w, yt^{-\frac{1}{\alpha }})  dw  du\\
    &=t^{-\frac{d}{\alpha }}p_{n+1}\big(1, xt^{-\frac{1}{\alpha }}, yt^{-\frac{1}{\alpha }}\big).
\end{align*}
By induction, we obtain \rf{ss form2} for all $n \ge 0$, and so  \rf{ss form} follows.
\end{proof}
We need to go beyond the integrals in \cite{MR3460023}, as follows.
\begin{lemma}\label{lem:h}
Let $\beta \in (0,d)$. Then,
\begin{align} 
\int_{\RR^d} p(s,x,z) |z|^{-\beta} dz &\approx  (s^{-\beta/\alpha} \land |x|^{-\beta}), \label{eq1:h}\\
\int_0^t \int_{\RR^d} p(s,x,z) |z|^{-\beta} dz ds &\approx 
\begin{cases}
 t(t^{-\beta/\alpha} \land |x|^{-\beta}) & \mbox{for } \beta<\alpha, \\
\ln(1 + t|x|^{-\alpha}) & \mbox{for } \beta =\alpha, \\
 (t \land |x|^\alpha)|x|^{-\beta} & \mbox{for } \beta>\alpha.
\end{cases}\label{eq2:h}
\end{align}
\end{lemma}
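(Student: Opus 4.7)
The plan is to reduce \eqref{eq1:h} to a dimensionless statement by scaling, then to deduce \eqref{eq2:h} by integrating against $ds$.

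First, using \eqref{eq:sca} and the change of variables $z = s^{1/\alpha} w$, $u := s^{-1/\alpha} x$, one computes
\begin{equation*}
\int_{\R^d} p(s,x,z) |z|^{-\beta}\,dz \;=\; s^{-\beta/\alpha}\, I(u), \qquad I(u):=\int_{\R^d} p_1(w-u)\,|w|^{-\beta}\,dw,
\end{equation*}
and the claimed bound $\approx s^{-\beta/\alpha}\wedge |x|^{-\beta}$ translates into $I(u)\approx 1\wedge|u|^{-\beta}$. So the whole of \eqref{eq1:h} reduces to a single elliptic statement about convolving the Riesz-type weight $|w|^{-\beta}$ against $p_1$.

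To obtain $I(u)\approx 1\wedge|u|^{-\beta}$ the plan is to exploit the sharp two-sided bound $p_1(y)\approx 1\wedge|y|^{-d-\alpha}$ and split the integration domain according to the relative sizes of $|w|$, $|u|$ and $|w-u|$. For $|u|\le 2$ I would estimate $I(u)$ from above by using boundedness of $p_1$ on $\{|w|\le 4\}$ (and $\beta<d$ to integrate $|w|^{-\beta}$), together with the polynomial decay $p_1(w-u)\lesssim |w|^{-d-\alpha}$ on $\{|w|>4\}$; a matching lower bound follows from $|w|^{-\beta}\ge c$ on the unit ball around $u$ and $\int_{|v|\le 1}p_1(v)\,dv>0$. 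For $|u|>2$ the integral splits naturally into three regions $\{|w|\le |u|/2\}$, $\{|u|/2\le|w|\le 2|u|\}$ and $\{|w|\ge 2|u|\}$. In the outer two regions $p_1(w-u)\lesssim (|u|\vee|w|)^{-d-\alpha}$, which yields an upper bound of order $|u|^{-\alpha-\beta}\le |u|^{-\beta}$; in the middle region $|w|^{-\beta}\approx |u|^{-\beta}$, and using $\int p_1(w-u)\,dw=1$ together with the positivity estimate $\int_{|v|\le |u|/2} p_1(v)\,dv\ge c>0$ gives the matching two-sided bound of order $|u|^{-\beta}$.

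For \eqref{eq2:h} the plan is to plug \eqref{eq1:h} into the $s$-integral and split at the scaling threshold $s_0:=|x|^{\alpha}$, where $s^{-\beta/\alpha}$ crosses $|x|^{-\beta}$. On $(0, t\wedge s_0)$ the integrand equals $|x|^{-\beta}$ up to constants, contributing $(t\wedge s_0)|x|^{-\beta}$; on $(s_0,t)$ (nonempty only if $t>s_0$) the integrand equals $s^{-\beta/\alpha}$, and a direct computation of $\int_{s_0}^{t} s^{-\beta/\alpha}\,ds$ produces the three cases: an algebraic term of order $t^{1-\beta/\alpha}$ for $\beta<\alpha$, a logarithm $\log(t/s_0)$ for $\beta=\alpha$, and a bounded remainder dominated by $|x|^{\alpha-\beta}$ for $\beta>\alpha$. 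Matching these against the right-hand sides is elementary. The only subtlety is the borderline case $\beta=\alpha$, where one checks that $(t\wedge s_0)|x|^{-\alpha}+\log^+(t/s_0)\approx \log(1+t|x|^{-\alpha})$ by considering the regimes $t\le s_0$ and $t>s_0$ separately. No genuine obstacle is anticipated; the main care is bookkeeping constants across the three $\beta$-regimes.
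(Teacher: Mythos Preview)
Your proposal is correct and complete, but the route to \eqref{eq1:h} differs from the paper's. The paper does not estimate the convolution $I(u)=\int p_1(w-u)|w|^{-\beta}\,dw$ directly. Instead it invokes the potential representation \eqref{h_beta}, writing $|z|^{-\beta}=c_\beta\int_0^\infty p_u(z)\,u^{(d-\alpha-\beta)/\alpha}\,du$, and then uses Chapman--Kolmogorov to collapse the spatial integral into the one-dimensional expression $h_\beta(s,x)=c_\beta\int_0^\infty p_{s+u}(x)\,u^{(d-\alpha-\beta)/\alpha}\,du$. After a scaling, this becomes $|x|^{-\beta}$ times a function of the single variable $s|x|^{-\alpha}$, and the two regimes $s\gtrless|x|^\alpha$ are handled by the scalar estimate $p(t,1)\approx t^{-d/\alpha}\wedge t$. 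Your argument, by contrast, stays in $\R^d$ and relies on the pointwise bound $p_1(y)\approx 1\wedge|y|^{-d-\alpha}$ together with the standard three-annulus decomposition. The paper's method is slicker in that it reduces everything to a one-variable integral and avoids the geometric case analysis; your method is more elementary in that it bypasses the identity \eqref{h_beta} altogether and would work for any radially decreasing kernel with the same two-sided pointwise bounds. Both approaches reach \eqref{eq2:h} the same way, by integrating \eqref{eq1:h} in $s$ and splitting at $s_0=|x|^\alpha$.
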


\begin{proof}
Note that by \eqref{h_beta},
$$
h_\beta(s,x) := \int_{\RR^d} p(s,x,z) |z|^{-\beta} dz = c_\beta \int_0^\infty p(s+u,x) u^{(d-\alpha-\beta)/\alpha} du 
$$
for some constant $c_\beta$.
By scaling,
\begin{align*} 
h_\beta(s,x) &= c_\beta |x|^{-d} \int_0^\infty p(s|x|^{-\alpha}+r,1) |x|^{d-\alpha-\beta} r^{(d-\alpha-\beta)/\alpha} |x|^\alpha dr\\
&= c_\beta |x|^{-\beta} \int_0^\infty p(s|x|^{-\alpha}+r,1)  r^{(d-\alpha-\beta)/\alpha}  dr.
\end{align*}
For $|x|^\alpha < s$, by \eqref{eq:oppt} we have $p(s|x|^{-\alpha}+r,1) \approx (s|x|^{-\alpha}+r)^{-d/\alpha}$,
hence
\begin{align*} 
h_\beta(s,x) &\approx |x|^{-\beta} \int_0^\infty (s|x|^{-\alpha})^{-d/\alpha} (1+u)^{-d/\alpha} (s|x|^{-\alpha})^{(d-\alpha-\beta)/\alpha}  u^{(d-\alpha-\beta)/\alpha} (s|x|^{-\alpha}) du\\
&= s^{-\beta/\alpha} \int_0^\infty (1+u)^{-d/\alpha} u^{(d-\alpha-\beta)/\alpha} du = cs^{-\beta/\alpha}.
\end{align*}
For $|x|^\alpha \ge s$, again by \eqref{eq:oppt} we have $$c^{-1}p(r,1)\leq p(s|x|^{-\alpha}+r,1) \leq c( p(r,1)+p(r+1,1)).$$ Consequently $h_\beta(s,x) \approx |x|^{-\beta}$.
Now, \eqref{eq2:h} follows by integrating \eqref{eq1:h}.
\end{proof}
Let $\tilde P_t f(x) = \int_\Rd \tilde p(t,x, y) f(y) dy$.
\begin{proposition}\label{prop:SCP}If $0<\kappa\leq\kappa^*$, then
$\{\tilde P_t\}_{t>0}$ is a strongly continuous semigroup of contractions on $L^2(\Rd)$.
\end{proposition}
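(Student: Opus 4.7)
The three defining properties---semigroup identity, contractivity on $L^2(\Rd)$, and strong continuity at $t=0$---will be verified in that order. The semigroup identity $\tilde P_{t+s}=\tilde P_t \tilde P_s$ is the content of the Chapman--Kolmogorov equation \eqref{eq:ChK} combined with Fubini's theorem (justified by $\tilde p\ge 0$).

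For contractivity I would apply Schur's test with weight $h(x):=|x|^{-\delta}$. Lemma~\ref{pt_h} furnishes $\int \tilde p(t,x,y) h(y)\,dy\le h(x)$; the symmetry of $\tilde p$ yields the dual estimate $\int \tilde p(t,x,y) h(x)\,dx\le h(y)$. Writing $\tilde p = \tilde p^{1/2}\cdot \tilde p^{1/2}$ and applying Cauchy--Schwarz one obtains
\begin{equation*}
|\tilde P_t f(x)|^2 \le \Bigl(\int \tilde p(t,x,y)\, h(y)\,dy\Bigr)\Bigl(\int \tilde p(t,x,y)\,\frac{|f(y)|^2}{h(y)}\,dy\Bigr)\le h(x)\int \tilde p(t,x,y)\,\frac{|f(y)|^2}{h(y)}\,dy.
\end{equation*}
Integrating in $x$, applying Fubini, and invoking the dual bound yields $\|\tilde P_t f\|_2^2\le \|f\|_2^2$.

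For strong continuity at $t=0$ I would use a squeeze argument. From the series \eqref{def_p_tilde} and $q\ge 0$ we have $\tilde p\ge p$, so whenever $f\in L^2(\Rd)$ is nonnegative the pointwise inequality $\tilde P_t f\ge P_t f\ge 0$ holds. Set $A_t:=\|\tilde P_t f\|_2^2$, $B_t:=\langle \tilde P_t f, P_t f\rangle$ and $C_t:=\|P_t f\|_2^2$. Contractivity (just proved) gives $A_t\le \|f\|_2^2$, while the standard strong continuity of $\{P_t\}$ on $L^2(\Rd)$ gives $C_t\to \|f\|_2^2$. The pointwise comparison $\tilde P_t f-P_t f\ge 0$ together with $P_t f\ge 0$ gives $B_t\ge C_t$, whereas Cauchy--Schwarz gives $B_t\le \|f\|_2\sqrt{C_t}$, so $B_t\to \|f\|_2^2$. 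Expanding $0\le \|\tilde P_t f-P_t f\|_2^2 = A_t-2B_t+C_t$ then forces $A_t\to \|f\|_2^2$, whence $\|\tilde P_t f-P_t f\|_2\to 0$; combined with $\|P_t f-f\|_2\to 0$ this gives $\tilde P_t f\to f$ in $L^2$. General $f\in L^2(\Rd)$ is handled by linearity via $f=f_+-f_-$, and strong continuity at arbitrary $t_0>0$ follows from the semigroup identity and the uniform contraction bound.

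The main obstacle is the strong continuity step: the singularity of $q$ at the origin obstructs any direct dominated-convergence argument on the Duhamel expansion, and at this point in the paper no pointwise control of $\tilde p$ near $t=0$ is available. The three-quantity squeeze---powered only by the comparison $\tilde p\ge p$ and the contractivity just established---bypasses both difficulties cleanly, which is why contractivity is proved first.
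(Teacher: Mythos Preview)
Your proof is correct. The contractivity argument via Schur's test with weight $h$ is identical to the paper's. For strong continuity, however, the paper takes a different route: it reduces by density to $\varphi\in C_c^\infty(\Rd)$, writes $\tilde P_t\varphi=P_t\varphi+T_t\varphi$ via Duhamel's formula \eqref{eq:Df}, and shows $\|T_t\varphi\|_{L^2}\to0$ by dominating $\varphi\le Ch^*$, applying Lemma~\ref{pt_h} to the inner $\tilde p$-integral, and then invoking the explicit bound of Lemma~\ref{lem:h} on $\int_0^t\int p(s,x,z)q(z)h^*(z)\,dz\,ds$.

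Your squeeze argument is more elementary: it uses only the pointwise comparison $\tilde p\ge p$, the contractivity just established, and the known strong continuity of $\{P_t\}$, avoiding Lemma~\ref{lem:h} entirely. It would work verbatim for any nonnegative Schr\"odinger perturbation for which the perturbed semigroup is already known to be an $L^2$-contraction. The paper's approach, by contrast, yields an explicit $L^2$-bound on the Duhamel remainder $T_t\varphi$ (roughly $\int(t^2\wedge|x|^{2\alpha})|x|^{-d-\alpha}dx$), which is slightly more quantitative but requires the auxiliary integral calculus of Lemma~\ref{lem:h}.
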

\begin{proof}
By the symmetry of $\tilde p$, Lemma \ref{pt_h} and the Schur test, $\tilde P_t$ is a contraction on $L^2(\Rd)$ for every $t>0$.  
To prove the strong continuity, let $g \in L^2(\Rd)$ and $\varphi \in C_c^{\infty } ( \Rd)$.
We have
\begin{equation*}
   \|\tilde P_t g- g \|_{L^2} \leq \| \tilde P_t g- \tilde P_t \varphi\|_{L^2} + \| \tilde P_t \varphi - \varphi\|_{L^2} + \| \varphi - g\|_{L^2}
   \leq\| \tilde P_t \varphi - \varphi\|_{L^2} + 2\| \varphi - g\|_{L^2} .
\end{equation*}
By the density of $C^\infty_c(\Rd)$ in $L^2(\Rd)$, it suffices to verify that $\| \tilde P_t \varphi - \varphi\|_{L^2}\to 0$ as $t\to 0$.
To this end we consider \eqref{eq:Df} and let $T_t$ be the integral operator with the kernel
$$
\int_0^t \int_{\bbfR^d} p(s,x,z) q(z) \tp(t-s,z,y) dz ds.
$$ 
Since $\{P_t\}_{t\geq0}$ is strongly continuous on $L^2(\Rd)$, it is enough to prove that
\begin{equation}\label{eq:T_t0}
\lim_{t\to 0^+}\|T_t\varphi\|_{L^2}=0,
\end{equation}
for every non-negative function $\varphi\in C^\infty_c(\Rd)$. Since the kernel of $T_t$ is maximal for $\delta=(d-\alpha)/2$, it suffices to consider 
this case. 
Then there is a constant $C$, depending on $\varphi$, such that
$\varphi\leq C h^*$. By Lemma \ref{pt_h} and Lemma \ref{lem:h}, 
\begin{align*}
\|T_t\varphi\|^2_{L^2}&\leq C\|T_t h^*\|^2_{L^2} \leq C\int_{\Rd}\left(\int^t_0\int_{\Rd}p(s,x,z)q(z)h^*(z)dzds\right)^2dx\\&\approx \int_{\RR^d}(t^2 \land |x|^{2\alpha})|x|^{-d-\alpha} dx,
\end{align*}
and \eqref{eq:T_t0} follows by the dominated convergence theorem.
\end{proof}

\section{Integral analysis}\label{sec:ia}
In this section we propose a new method of estimating the heat kernel of  Schr\"odinger heat kernels.
The method picks up on the  ideas of Bogdan, Dyda and Kim \cite[Section~2]{MR3460023} and develops a suitable integral calculus to handle $\tp$ by means of the Duhamel's formula.
By our conventions, $\tp=p$ if $\delta=0$.
We will study the integrals
$$
\int_{\bbfR^d} \tp(t,x,y) |y|^{-\beta} dy, \qquad 0\le\beta<d-\alpha.
$$
For $\beta=\delta$, from Lemma \ref{pt_h} we have
\begin{align}\label{eq:tp_bas_est}
	\int_{\bbfR^d} \tp(t,x,y) |y|^{-\delta} dy \le |x|^{-\delta}, \qquad t>0,\; x \in \Rd\,.
\end{align}

One of our main results is the following.
\begin{theorem}\label{thm:1}
For $\delta \in [0,\frac{d-\alpha}{2}]$, 
\begin{align}\label{eq:2}
	\int_{\bbfR^d} \tp(t,x,y)  |y|^{-\delta} dy = |x|^{-\delta}, \qquad t>0,\; x \in \Rd\,.
\end{align}
Furthermore for $\beta \in [0, d-\alpha-\delta)$, $t>0$, $x \not=0$,
\begin{align}\label{eq:1}
	\int_{\bbfR^d} \tp(t,x,y)  |y|^{-\beta} dy = |x|^{-\beta} + (\kappa - \kappa_\beta) \int_0^t \int_{\bbfR^d} \tp(s,x,y) |y|^{-\beta-\alpha}\,dy\,ds<\infty.
\end{align}
\end{theorem}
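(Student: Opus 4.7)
The plan is to expand $\tp = \sum_{n\ge 0} p_n$ via \eqref{def_p_tilde} and iterate a free-kernel counterpart of \eqref{eq:1}. The base identity is: for $\beta \in (0, d-\alpha)$,
\begin{equation*}
\int_{\Rd} p(t,x,y)\, |y|^{-\beta}\,dy \;=\; |x|^{-\beta} \;-\; \kappa_\beta \int_0^t \!\!\int_{\Rd} p(s,x,y)\, |y|^{-\beta - \alpha}\,dy\,ds.
\end{equation*}
I would derive this from the representation $|x|^{-\beta} = \int_0^\infty f(u)\, p_u(x)\,du$ with $f(u) = c_1 u^{(d-\alpha-\beta)/\alpha}$ from \eqref{h_beta}, together with \eqref{eq:toz}: writing $P_s h_\beta(x) = \int_s^\infty f(u-s)\, p_u(x)\,du$, differentiating in $s$ (using $f(0)=0$ since $\beta < d-\alpha$) gives $\partial_s P_s h_\beta = -\kappa_\beta P_s(|\cdot|^{-\beta-\alpha})$, and integrating from $0$ to $t$ yields the identity. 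Lemma~\ref{lem:h} supplies the necessary finiteness.

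To prove \eqref{eq:1}, introduce
\[
a_n(t,x) := \int_{\Rd} p_n(t,x,y)\, |y|^{-\beta}\,dy, \qquad A_n(t,x) := \int_0^t \!\!\int_{\Rd} p_n(s,x,y)\, |y|^{-\beta-\alpha}\,dy\,ds.
\]
The base identity then yields $a_0 = |x|^{-\beta} - \kappa_\beta A_0$. For $n \ge 1$, Fubini applied to the Duhamel recursion \eqref{pn} gives $a_n(t,x) = \int_0^t \!\int p(s,x,z) q(z) a_{n-1}(t-s,z)\,dz\,ds$, and another Fubini rearrangement shows that $\int_0^t \!\int p(s,x,z) q(z) A_k(t-s,z)\,dz\,ds = A_{k+1}(t,x)$ for every $k \ge 0$. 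A single induction then delivers $a_n = \kappa A_{n-1} - \kappa_\beta A_n$ for $n \ge 1$. Summing over $n$ and re-indexing collapses the $A_n$-contributions, giving $\sum_n a_n = |x|^{-\beta} + (\kappa-\kappa_\beta)\sum_n A_n$, which, since $\tp = \sum_n p_n$, is exactly \eqref{eq:1}. Finiteness of every $a_n$ and $A_n$ (and of the summed double integral) is propagated inductively from the bound of Lemma~\ref{lem:h}.

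For \eqref{eq:2} with $\delta \in [0, \delta^*)$, setting $\beta = \delta$ in \eqref{eq:1} annihilates the prefactor $\kappa - \kappa_\delta = 0$. The critical case $\delta = \delta^*$ lies on the boundary of the $\beta$-range of \eqref{eq:1} and demands a limit argument. Apply \eqref{eq:1} to $\tp^*$ with $\beta \uparrow \delta^*$; fixing a small $\epsilon_0 > 0$ and taking $\beta \in [\delta^*-\epsilon_0, \delta^*)$, on $\{|y|>1\}$ we have $|y|^{-\beta} \le |y|^{-\delta^*+\epsilon_0}$, which is $\tp^*(t,x,\cdot)$-integrable by \eqref{eq:1} applied at $\beta = \delta^*-\epsilon_0$, while on $\{|y|\le 1\}$, $|y|^{-\beta} \uparrow |y|^{-\delta^*}$ monotonically. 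DCT and MCT therefore give $\int \tp^*(t,x,y)|y|^{-\beta}\,dy \to \int \tp^*(t,x,y)|y|^{-\delta^*}\,dy$. Since $|x|^{-\beta}\to|x|^{-\delta^*}$, the residual $(\kappa^*-\kappa_\beta)\int_0^t\!\int \tp^*(s,x,y) |y|^{-\beta-\alpha}\,dy\,ds$ also converges, to some $L\ge 0$, producing $\int \tp^*(t,x,y)|y|^{-\delta^*}\,dy = |x|^{-\delta^*}+L$. Lemma~\ref{pt_h} gives the opposing inequality $\int \tp^*(t,x,y)|y|^{-\delta^*}\,dy \le |x|^{-\delta^*}$, forcing $L=0$ and establishing \eqref{eq:2}.

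The main obstacle is the rigorous justification of Fubini in the induction for \eqref{eq:1}, which requires a priori finiteness of every $a_n$ and $A_n$; this is bootstrapped from Lemma~\ref{lem:h} together with the supermedian bound of Lemma~\ref{pt_h}. The second delicate point is the critical-case limit, where it is precisely the combination of \eqref{eq:1} with the supermedianness in Lemma~\ref{pt_h} that pins down $L=0$ without any quantitative control on how $\kappa^*-\kappa_\beta$ vanishes against the divergence of the remaining double integral.
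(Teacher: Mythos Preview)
Your telescoping scheme is exactly what the paper does (their Lemma~\ref{lem:p_id}, Lemma~\ref{lem:pn_id} and Corollary~\ref{cor:pn_id}), and your treatment of the critical case by letting $\beta\uparrow\delta^*$ and invoking Lemma~\ref{pt_h} to force $L=0$ matches the paper as well. The gap is in the passage from the partial sums to the limit. Summing $a_0=|x|^{-\beta}-\kappa_\beta A_0$ and $a_n=\kappa A_{n-1}-\kappa_\beta A_n$ gives
\[
\sum_{n=0}^{N} a_n \;=\; |x|^{-\beta} + (\kappa-\kappa_\beta)\sum_{n=0}^{N-1} A_n \;-\; \kappa_\beta A_N,
\]
so you need $A_N\to 0$. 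Your claim that ``finiteness of the summed double integral is propagated inductively from Lemma~\ref{lem:h} together with Lemma~\ref{pt_h}'' is where the argument breaks. The inductive bound coming from Lemma~\ref{lem:h} is $A_n\le (\kappa/\kappa_\beta)^n A_0/\kappa_\beta$, which is summable only when $\kappa_\beta>\kappa$, i.e.\ when $\beta\in(\delta,d-\alpha-\delta)$. For $\beta\in[0,\delta]$ the ratio $\kappa/\kappa_\beta\ge 1$ and the bound gives no decay at all; in the critical case $\delta=\delta^*$ the interval $(\delta,d-\alpha-\delta)$ is empty, so this route is unavailable for every $\beta$. Lemma~\ref{pt_h} alone does not close the gap either: it controls $\int\tp(t,x,y)|y|^{-\delta}\,dy$, but for $\beta<\delta$ one has $|y|^{-\beta}>|y|^{-\delta}$ on $\{|y|>1\}$, so the finiteness of $\int\tp\,|y|^{-\beta}\,dy$ (equivalently of $\sum_n A_n$ via the identity) still requires control of the total mass $\int\tp(t,x,y)\,dy$.

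That mass bound is exactly Proposition~\ref{lem:int_tpinfty} in the paper, proved by an independent argument (split $q$ at level $R$, absorb the far part via Chapman--Kolmogorov, and control the near part using $|y|^{-\delta}\ge (2R)^{-\delta}$ on $B(0,2R)$ together with Lemma~\ref{pt_h}). The paper flags the vanishing of $A_N$ as ``nontrivial'' and uses Proposition~\ref{lem:int_tpinfty} precisely to supply the integrable majorant for dominated convergence when $\beta\le\delta$. Your critical-case limit also relies on $\int_{|y|>1}\tp^*(t,x,y)|y|^{-\delta^*+\epsilon_0}\,dy<\infty$, which again reduces to $\int\tp^*\,dy<\infty$. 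So the missing ingredient throughout is this mass estimate; once you insert it (or reprove it), the rest of your outline goes through essentially as in the paper.
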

The proof of Theorem~\ref{thm:1} is given below after a sequence of auxiliary results.
Noteworthy, in \eqref{eq:1} and similar formulas we have two types of integrations: in space-time up to the terminal time $t$, and in space only, at the terminal time $t$. We also note that \eqref{eq:2} is an immediate consequence of \eqref{eq:1}, except in the critical case $\delta=\delta^*=(d-\alpha)/2$.

\begin{proposition}\label{lem:int_tpinfty}
There is a constant $M \ge 1$ such that for all $t \in (0,\infty)$ and $x \in \bbfR^d \setminus \{0\}$,
\begin{equation*}
	\int_{\bbfR^d} \tp(t,x,y) dy \le M (1+t^{\delta/\alpha}|x|^{-\delta}).
\end{equation*}
\end{proposition}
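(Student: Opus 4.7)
By Lemma~\ref{ss th}, the substitution $w=yt^{-1/\alpha}$ gives $\int_{\Rd}\tp(t,x,y)\,dy=\int_{\Rd}\tp(1,xt^{-1/\alpha},w)\,dw$, so it suffices to prove the bound at $t=1$: there exists $M$, independent of $z\neq 0$, such that $F(1,z):=\int \tp(1,z,y)\,dy\le M(1+|z|^{-\delta})$; the general $t$ case then follows since $1+|xt^{-1/\alpha}|^{-\delta}=1+t^{\delta/\alpha}|x|^{-\delta}$.

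The starting point is Theorem~\ref{thm:1}, equation \eqref{eq:1} applied with $\beta=0$ (for which $\kappa_0=0$):
\[
F(t,x)=1+\kappa\int_0^t\int_{\Rd}\tp(s,x,z)\,|z|^{-\alpha}\,dz\,ds.
\]
The plan is to estimate the inner integral by splitting at $|z|=1$. On $\{|z|>1\}$ one has $|z|^{-\alpha}\le 1$, so that part is at most $F(s,x)$. On $\{|z|\le 1\}$, in the regime $\alpha\le\delta$, the elementary comparison $|z|^{-\alpha}\le|z|^{-\delta}$ combined with the key identity \eqref{eq:2} of Theorem~\ref{thm:1} produces a contribution of at most $\int\tp(s,x,z)|z|^{-\delta}\,dz=|x|^{-\delta}$. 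Adding both parts yields
\[
F(t,x)\le 1+\kappa t\,|x|^{-\delta}+\kappa\int_0^t F(s,x)\,ds,
\]
and an application of Gronwall's inequality gives $F(t,x)\le(1+\kappa t\,|x|^{-\delta})e^{\kappa t}$, which at $t=1$ is the desired bound with $M=e^{\kappa}\max(1,\kappa)$.

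In the complementary regime $\alpha>\delta$, the comparison $|z|^{-\alpha}\le|z|^{-\delta}$ fails on $\{|z|\le 1\}$ and one has to work harder. Here the approach is to apply \eqref{eq:1} with $\beta=\alpha$ (admissible as soon as $\alpha<d-\alpha-\delta$) and to exploit the sign $\kappa-\kappa_\alpha\le 0$, which follows from $\kappa=\kappa_\delta$ together with the monotonicity of $\beta\mapsto\kappa_\beta$ on $(0,\delta^*]$ (using the reflection $\kappa_\beta=\kappa_{d-\alpha-\beta}$ when $\alpha>\delta^*$). This yields $\int\tp(s,x,z)|z|^{-\alpha}\,dz\le|x|^{-\alpha}$, a bound which is adequate for $|x|$ large but too crude for $|x|$ small. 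One then iterates \eqref{eq:1} at $\beta=\alpha,2\alpha,\ldots$ up to the largest admissible step, whose alternating signs produce a telescoping cancellation restoring the correct scaling $\lesssim|x|^{-\delta}$ near the origin; the residual at the final admissible $\beta$ is controlled by H\"older interpolation against \eqref{eq:2}.

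The principal obstacle is the case $\alpha>\delta$, where the naive Gronwall loop does not close by direct pointwise comparison and one must instead leverage the specific sign structure of the coefficients $\kappa-\kappa_{k\alpha}$ in the iterated version of \eqref{eq:1}. This sign structure is precisely what the subcritical placement $\delta\le\delta^*=(d-\alpha)/2$ and the identity $\kappa=\kappa_\delta$ provide, tying the proof back to the very choice of $h=|x|^{-\delta}$ as a ground state for $\Delta^{\alpha/2}+\kappa|x|^{-\alpha}$.
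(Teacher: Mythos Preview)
Your argument has two genuine problems.

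\textbf{Circularity and the Gronwall step.} In the paper, the proof of Theorem~\ref{thm:1} for $\beta<\delta$ (hence in particular $\beta=0$) explicitly invokes Proposition~\ref{lem:int_tpinfty}; the finiteness claim ``$<\infty$'' in \eqref{eq:1} is precisely what you are trying to prove. You can of course obtain the identity $F(t,x)=1+\kappa\int_0^t\int_{\Rd}\tp(s,x,z)|z|^{-\alpha}\,dz\,ds$ directly from Duhamel \eqref{eq:Df2} by integrating in $y$ (Tonelli), bypassing Theorem~\ref{thm:1}. But the Gronwall inequality you then apply requires $F(\cdot,x)$ to be finite (or at least locally integrable) \emph{a priori}; the bound $F(t,x)\le 1+\kappa t|x|^{-\delta}+\kappa\int_0^t F(s,x)\,ds$ is vacuously satisfied by $F\equiv+\infty$. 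A repair is possible---run the same argument for the partial sums $F_N(t,x)=\int\sum_{n=0}^N p_n(t,x,y)\,dy$, each finite by induction and Lemma~\ref{lem:h}, obtain the bound uniformly in $N$, and let $N\to\infty$---but as written the step is a gap. (Also, for the $\{|z|\le 1\}$ piece you only need the inequality \eqref{eq:tp_bas_est}, not the equality \eqref{eq:2}; the latter again lies downstream of this Proposition.)

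\textbf{The case $\alpha>\delta$.} This is not an afterthought: for $d=2$, $\alpha=1$ one has $\delta^*=1/2$, so $\alpha>\delta$ always. Your sketch invokes \eqref{eq:1} at $\beta=\alpha$, but the admissibility condition $\alpha<d-\alpha-\delta$ can fail (e.g.\ $d=2$, $\alpha=1.5$, $\delta=0.1$ gives $d-\alpha-\delta=0.4<\alpha$). The subsequent ``iterate at $\beta=\alpha,2\alpha,\ldots$ with telescoping cancellation and H\"older interpolation'' is not a proof; neither the sign pattern of $\kappa-\kappa_{k\alpha}$ nor the endpoint control is worked out, and the range obstruction above already blocks the first step.

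For contrast, the paper's proof avoids both issues by a single self-absorption trick that works uniformly in $\alpha,\delta$. Split the Duhamel integral in \eqref{eq:Df2} at $|z|=R$: on $|z|>R$ one has $q(z)\le\kappa R^{-\alpha}$ and Chapman--Kolmogorov gives a term $\le \frac{\kappa}{R^\alpha}\tp(1,x,y)$, which is absorbed into the left side for $R=(2\kappa)^{1/\alpha}$. Integrating the remaining inequality in $y$ reduces the problem to bounding $\int_0^1\int_{|z|<R}\tp(s,x,z)q(z)\,dz\,ds$; one then reinserts a factor $\int_{B(0,2R)}p(1-s,z,y)\,dy\ge c$ and uses \eqref{eq:Df2} again to bound this by $c^{-1}\int_{B(0,2R)}\tp(1,x,y)\,dy\le c^{-1}(2R)^\delta\int\tp(1,x,y)|y|^{-\delta}\,dy\le c^{-1}(2R)^\delta|x|^{-\delta}$, the last step by Lemma~\ref{pt_h} alone. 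No case distinction on $\alpha$ versus $\delta$ and no Gronwall are needed.
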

\begin{proof}
By Lemma \ref{ss th}, it suffices to consider $t=1$.
Fix $x \not= 0$. We note that for every $R>0$, by \eqref{eq:Df2} and \eqref{eq:ChK},
\begin{align}
\tp(1,x,y) &= p(1,x,y) + \int_0^1 \int_{|z|<R}\tp(s,x,z) q(z) p(1-s,z,y)\,dz\,ds \label{eq1:int_tpinfty}\\
 &+  \int_0^1 \int_{|z|>R}\tp(s,x,z) q(z) p(1-s,z,y)\,dz\,ds \notag\\
&\le p(1,x,y) + \int_0^1 \int_{|z|<R}\tp(s,x,z) q(z) p(1-s,z,y)\,dz\,ds \notag\\
 &+  \frac{\kappa}{R^\alpha}\int_0^1 \int_{|z|>R}\tp(s,x,z) p(1-s,z,y)\,dz\,ds \notag\\
&\le p(1,x,y) + \int_0^1 \int_{|z|<R}\tp(s,x,z) q(z) p(1-s,z,y)\,dz\,ds +  \frac{\kappa}{R^\alpha}\tp(1,x,y)\,. \notag
\end{align}
Hence, for $R\ge(2\kappa)^{1/\alpha}$,
\begin{align}\label{eq2:int_tpinfty}
	\tp(1,x,y) \le 2p(1,x,y) + 2\int_0^1 \int_{|z|<R}\tp(s,x,z) q(z) p(1-s,z,y)\,dz\,ds.
\end{align}
On the other hand, by the equality \eqref{eq1:int_tpinfty} we have
\begin{align}\label{eq3:int_tpinfty}
	\tp(1,x,y) \ge p(1,x,y) + \int_0^1 \int_{|z|<R}\tp(s,x,z) q(z) p(1-s,z,y)\,dz\,ds.
\end{align}
Integrating \eqref{eq2:int_tpinfty} and \eqref{eq3:int_tpinfty} with respect to $dy$, we get
\begin{align*}\label{eq4:int_tpinfty}
	\int_{\bbfR^d} \tp(t,x,y) dy \approx 1 + \int_0^1\int_{|z|<R}\tp(s,x,z) q(z) \,dz\,ds\,.
\end{align*}
By \eqref{eq:oppt}, there is $c \in (0,1)$ such that for $|z|<R$ and $s \in (0,1)$, 
$$
c \le \int_{B(0,2R)} p(1-s,z,y) dy \le 1\,.
$$
By \eqref{eq:Df2} and \eqref{eq:tp_bas_est},
\begin{align*}
\int_0^1\int_{|z|<R}\tp(s,x,z) q(z) \,dz\,ds &\le \frac{1}{c} \int_{B(0,2R)}\int_0^1 \int_{|z|<R} \tp(s,x,z) q(z) p(1-s,z,y)\,dz\,ds\,dy \\
& \le \frac{1}{c} \int_{B(0,2R)}\tp(1,x,y)\,dy \le \frac{(2R)^\delta}{c} \int_{B(0,2R)}\tp(1,x,y) |y|^{-\delta}\,dy \\
&\le \frac{(2R)^\delta}{c}|x|^{-\delta}<\infty.
\end{align*}
\end{proof}

The special case $\kappa =0$ of Theorem \ref{thm:1} can already be verified as follows. 

\begin{lemma}\label{lem:p_id} For $\beta \in [0,d-\alpha)$, $t>0$ and $x\in \bbfR^d$, 
$$
 \kappa_\beta \int_0^t \int_{\bbfR^d} p(s,x,y) |y|^{-\beta-\alpha} \,dy \,ds + \int_{\bbfR^d} p(t,x,y)|y|^{-\beta} dy = |x|^{-\beta}.
$$
\begin{proof}
For $\beta =0$, we simply get $1=1$, so suppose $\beta>0$ and $x\not=0$. Then, by \eqref{eq:toz},
\begin{align*}
& \kappa_\beta \int_0^t \int_{\bbfR^d} p(s,x,y) |y|^{-\beta-\alpha} \,dy \,ds  =  \int_0^t \int_0^\infty p_{s+r}(x) f'(r) dr ds\\
&= -\int_0^t \int_0^\infty \frac{\partial}{\partial s} p_{s+r}(x) f(r) dr ds
=  -\int_0^\infty [p_r(x) - p_{t+r}(x)] f(r) dr \\
&= |x|^{-\beta} - \int_{\bbfR^d} p(t,x,y)|y|^{-\beta} dy,
\end{align*}
as needed.
For $x=0$  the iterated integral in the statement diverges
by \eqref{eq:oppt}. 
\end{proof}
\end{lemma}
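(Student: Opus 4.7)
The plan is to split off trivial cases and then execute a single integral-analytic identity via the representation \eqref{eq:toz} applied in the variable $y$. For $\beta = 0$ both sides reduce to $1$, since $\kappa_0 = 0$ and $p(t,x,\cdot)$ is a probability density. For $x = 0$ and $\beta > 0$, the inner space integral $\int p_s(y)|y|^{-\beta-\alpha}\,dy$ equals $c_2\, s^{-(\beta+\alpha)/\alpha}$ by the scaling \eqref{eq:sca}, and its integral in $s$ over $(0,t)$ diverges, matching the right-hand side $|0|^{-\beta} = \infty$. So the substantive work is for $\beta \in (0, d-\alpha)$ and $x \neq 0$.

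For this case I would substitute
$$
\kappa_\beta |y|^{-\beta-\alpha} = \int_0^\infty f'(r)\, p_r(y)\, dr
$$
from \eqref{eq:toz} into the space-time integral on the left. Tonelli is legal because $f'(r) \geq 0$ on $(0,\infty)$ whenever $\beta < d - \alpha$, so the integrand is nonnegative throughout. Collapsing the $y$-integral by the convolution semigroup identity $\int p(s,x,y)\, p_r(y)\, dy = p_{s+r}(x)$ (using the symmetry $p_r(-y) = p_r(y)$), the triple integral reduces to
$$
\int_0^t \int_0^\infty f'(r)\, p_{s+r}(x)\, dr\, ds.
$$

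Next I would exploit the equality $\partial_r p_{s+r}(x) = \partial_s p_{s+r}(x)$ to move the derivative from $f$ onto $p$. Integrating by parts in $r$, the boundary terms vanish: $f(0) = 0$ since the exponent $(d-\alpha-\beta)/\alpha$ is positive, and $f(r)\, p_{s+r}(x) \sim r^{-(\alpha+\beta)/\alpha} \to 0$ as $r \to \infty$ by \eqref{eq:oppt}. Then $\int_0^t \partial_s p_{s+r}(x)\, ds = p_{t+r}(x) - p_r(x)$, leaving
$$
\int_0^\infty f(r)\bigl(p_r(x) - p_{t+r}(x)\bigr)\, dr.
$$
The first piece equals $|x|^{-\beta}$ directly by \eqref{h_beta}, and the second equals $\int p(t,x,y)|y|^{-\beta}\,dy$ after one more application of Chapman--Kolmogorov combined with \eqref{h_beta}, which yields the claimed identity.

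The only real obstacle is bookkeeping: verifying the Fubini swaps and the vanishing of the boundary terms in the integration by parts. The sign $f' \geq 0$ makes Tonelli automatic, and the explicit form $f(r) = c_1 r^{(d-\alpha-\beta)/\alpha}$ combined with the on- and off-diagonal bounds \eqref{eq:oppt} for $p_t$ supplies the required decay of every intermediate integrand at $0$ and at $\infty$. No estimate deeper than what is already recorded in the preliminaries is needed.
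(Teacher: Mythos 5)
Your proposal is correct and reproduces the paper's own argument step for step: substitute \eqref{eq:toz} for $\kappa_\beta|y|^{-\beta-\alpha}$, collapse the spatial integral by Chapman--Kolmogorov to get $\int_0^t\int_0^\infty p_{s+r}(x)f'(r)\,dr\,ds$, integrate by parts in $r$ using $\partial_r p_{s+r}=\partial_s p_{s+r}$ with vanishing boundary terms, and identify the two resulting integrals via \eqref{h_beta}. Your version is slightly more careful about justifying Tonelli and the boundary decay, and incidentally has the sign right in the penultimate display, where the paper's printed chain carries a spurious minus sign in front of $\int_0^\infty[p_r(x)-p_{t+r}(x)]f(r)\,dr$.
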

Although we do not need this observations later on, if we let $\beta \to 0$ in 
$$
\int_0^t \int_{\bbfR^d} p(s,x,y) |y|^{-\beta-\alpha} \,dy \,ds = \frac{1}{\kappa_\beta} \int_{\bbfR^d} p(t,x,y) (|x|^{-\beta} - |y|^{-\beta}) dy,
$$
then for $t>0$, $x\in \Rd\setminus\{0\}$, we get
\begin{equation}\label{eq:log}
\frac{\Gamma(\frac{\alpha}{2}) \Gamma(\frac{d}{2})}{2^{1-\alpha}\Gamma(\frac{d-\alpha}{2})}\int_0^t \int_{\bbfR^d} p(s,x,y) |y|^{-\alpha} \,dy \,ds =  \int_{\bbfR^d} p(t,x,y)\ln(|y|) dy - \ln(|x|). 
\end{equation}

Back to the proof of Theorem \ref{thm:1}, we recall that the functions $p_n$ are defined in \eqref{pn}.

\begin{lemma}\label{lem:int_pn_est}
Let $0 < \beta < d-\alpha$ and $n \ge 0$. For all $t>0$  and $x\not=0$, 
\begin{align}
\int_0^t \int_{\bbfR^d} p_n(s,x,y) |y|^{-\beta-\alpha} \,dy \,ds \le \frac{\kappa^n}{\kappa_\beta^{n+1}}|x|^{-\beta} < \infty . \label{eq:int_pn_est}
\end{align}
\end{lemma}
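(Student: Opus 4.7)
The plan is to prove \eqref{eq:int_pn_est} by induction on $n$, using Lemma~\ref{lem:p_id} as the base case and the recursive definition \eqref{pn} of $p_n$ for the inductive step.

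For $n=0$, recall that $p_0=p$, so Lemma~\ref{lem:p_id} gives
\begin{equation*}
\kappa_\beta \int_0^t\!\int_{\bbfR^d} p(s,x,y)|y|^{-\beta-\alpha}\,dy\,ds = |x|^{-\beta} - \int_{\bbfR^d} p(t,x,y)|y|^{-\beta}\,dy \le |x|^{-\beta}.
\end{equation*}
Dividing by $\kappa_\beta$ yields the $n=0$ case (with the finiteness coming from $|x|>0$).

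For the inductive step, assume \eqref{eq:int_pn_est} for some $n\ge 0$. Using \eqref{pn} and Tonelli's theorem (all integrands are nonnegative), together with the substitution $v=s-u$, I would write
\begin{align*}
\int_0^t\!\int_{\bbfR^d} p_{n+1}(s,x,y)|y|^{-\beta-\alpha}\,dy\,ds
&= \int_0^t\!\int_{\bbfR^d} p(u,x,z)\,q(z)\left[\int_0^{t-u}\!\int_{\bbfR^d} p_n(v,z,y)|y|^{-\beta-\alpha}\,dy\,dv\right]\!dz\,du.
\end{align*}
Apply the inductive hypothesis to the bracketed inner integral to bound it by $\tfrac{\kappa^n}{\kappa_\beta^{n+1}}|z|^{-\beta}$, so that
\begin{equation*}
\int_0^t\!\int_{\bbfR^d} p_{n+1}(s,x,y)|y|^{-\beta-\alpha}\,dy\,ds
\le \frac{\kappa^n}{\kappa_\beta^{n+1}} \int_0^t\!\int_{\bbfR^d} p(u,x,z)\,q(z)|z|^{-\beta}\,dz\,du.
\end{equation*}
Since $q(z)=\kappa|z|^{-\alpha}$, the remaining integral equals $\kappa$ times $\int_0^t\!\int p(u,x,z)|z|^{-\beta-\alpha}\,dz\,du$, which by the $n=0$ bound is at most $\kappa\,\kappa_\beta^{-1}|x|^{-\beta}$. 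Combining gives the desired bound with $n$ replaced by $n+1$.

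The proof is essentially a direct induction; there is no real obstacle since nonnegativity of all functions makes Tonelli automatic and the swap of the $\,ds$ and $\,dv$ integrations is routine. The only mild point is ensuring $|x|^{-\beta}<\infty$, which holds because $x\neq 0$, and that the constant improves geometrically in $n$ only when $\kappa<\kappa_\beta$, but the inequality itself is valid regardless; sharpness of the factor $\kappa/\kappa_\beta$ is what will later feed into the analysis of $\tp=\sum p_n$ via \eqref{def_p_tilde}.
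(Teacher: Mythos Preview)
Your proof is correct and follows essentially the same induction as the paper. The only cosmetic difference is that you expand $p_{n+1}$ via the first line of \eqref{pn} (so the inductive hypothesis is applied to the inner $p_n$-integral and the $n=0$ case to the outer $p$-integral), whereas the paper uses the second line of \eqref{pn} and applies the two bounds in the opposite order; the computations are otherwise identical.
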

\begin{proof}
Let $x \in \bbfR^d$. For $n=0$, \eqref{eq:int_pn_est} follows from Lemma \ref{lem:p_id}. Suppose \eqref{eq:int_pn_est} holds for some $n \ge 0$. By Lemma \ref{lem:p_id} and induction,
\begin{align*}
&\int_0^t \int_{\bbfR^d} p_{n+1}(s,x,y) |y|^{-\beta-\alpha}\, dy\, ds \\
&= \int_0^t \int_{\bbfR^d} \int_0^s \int_{\bbfR^d} p_{n}(r,x,w) q(w) p(s-r,w,y)|y|^{-\beta-\alpha} \,dy \,dr \,dw \,ds \\
&= \int_0^t \int_{\bbfR^d} \int_0^{t-r} \int_{\bbfR^d} p_{n}(r,x,w) q(w) p(s,w,y)|y|^{-\beta-\alpha} \,dy \,ds \,dw \,dr \\
&\le  \int_0^t \int_{\bbfR^d}  p_{n}(r,x,w) q(w) \frac{1}{\kappa_\beta}|w|^{-\beta} \,dw \,ds =  \frac{\kappa}{\kappa_\beta} \int_0^t \int_{\bbfR^d}  p_{n}(r,x,w) |w|^{-\beta-\alpha} \,dw \,ds \\
&\le \frac{\kappa^{n+1}}{\kappa_\beta^{n+2}}|x|^{-\beta},
\end{align*} 
which is finite if $x\not=0$.
\end{proof}
\begin{corollary}\label{cor:tp_bas_est2}
For $\delta<\beta<d-\alpha-\delta$, $t>0$ and $x\not=0$, we have
$$
\int_0^t \int_{\bbfR^d} \tp(s,x,y) |y|^{-\beta-\alpha} \,dy \,ds \le \frac{|x|^{-\beta}}{\kappa_\beta - \kappa}<\infty .
$$
\end{corollary}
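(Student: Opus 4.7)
The plan is to derive this bound directly from Lemma~\ref{lem:int_pn_est} by exploiting the series representation \eqref{def_p_tilde} of $\tp$. Since all the $p_n$ are nonnegative, Tonelli's theorem permits interchanging the sum with the space-time integral, giving
\begin{equation*}
\int_0^t\!\int_{\bbfR^d}\tp(s,x,y)|y|^{-\beta-\alpha}\,dy\,ds
=\sum_{n=0}^\infty \int_0^t\!\int_{\bbfR^d} p_n(s,x,y)|y|^{-\beta-\alpha}\,dy\,ds.
\end{equation*}
Then I would plug in the termwise bound $\kappa^n/\kappa_\beta^{n+1}\cdot|x|^{-\beta}$ supplied by Lemma~\ref{lem:int_pn_est} (whose hypothesis $0<\beta<d-\alpha$ is satisfied since $\delta<\beta<d-\alpha-\delta$ and $\delta\in[0,(d-\alpha)/2]$), sum the resulting geometric series, and obtain the claimed bound $|x|^{-\beta}/(\kappa_\beta-\kappa)$.

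The one nontrivial point is checking that the geometric series actually converges, i.e.\ that $\kappa=\kappa_\delta<\kappa_\beta$ strictly. For this I would invoke the monotonicity and symmetry of $\beta\mapsto\kappa_\beta$ recalled after \eqref{eq:toz}: this map is strictly increasing on $[0,\delta^*]$, strictly decreasing on $[\delta^*,d-\alpha)$, and satisfies $\kappa_\beta=\kappa_{d-\alpha-\beta}$. If $\beta\in(\delta,\delta^*]$ then $\kappa_\beta>\kappa_\delta$ by monotonicity; if $\beta\in(\delta^*,d-\alpha-\delta)$ then $d-\alpha-\beta\in(\delta,\delta^*)$ and $\kappa_\beta=\kappa_{d-\alpha-\beta}>\kappa_\delta=\kappa$ again by monotonicity. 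In either case $\kappa/\kappa_\beta<1$, so the geometric series sums to $1/(\kappa_\beta-\kappa)$ after factoring out $1/\kappa_\beta$.

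There is no serious obstacle here; the content has already been packaged into Lemma~\ref{lem:int_pn_est}. The only subtle ingredient is the strict inequality $\kappa<\kappa_\beta$, which is forced precisely by the assumption $\delta<\beta<d-\alpha-\delta$ excluding the endpoints. In particular the result fails at the critical endpoints $\beta=\delta$ or $\beta=d-\alpha-\delta$, where $\kappa_\beta=\kappa$ and the denominator $\kappa_\beta-\kappa$ vanishes; this is consistent with the more delicate nature of the critical case $\delta=\delta^*$, which is the reason Theorem~\ref{thm:1} requires the separate Proposition~\ref{lem:int_tpinfty} for the $\beta=\delta$ borderline.
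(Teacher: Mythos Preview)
Your argument is correct and follows exactly the paper's approach: the paper's proof simply notes that $\kappa_\beta>\kappa$ on the given range of $\beta$ and then invokes Lemma~\ref{lem:int_pn_est} together with the series~\eqref{def_p_tilde}. Your write-up merely unpacks these two lines, including the justification of $\kappa_\beta>\kappa$ via the monotonicity and symmetry of $\beta\mapsto\kappa_\beta$.
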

\begin{proof}
If $\delta<\beta<d-\alpha-\delta$, then $\kappa_\beta > \kappa$. 
Lemma \ref{lem:int_pn_est} and \eqref{def_p_tilde} yield the result.
\end{proof}

\begin{lemma}\label{lem:pn_id}
Let $0\le \beta < d-\alpha$ and $n \ge 1$. For all $t>0$  and $x\not=0$, we have
$$
\kappa_\beta\int_0^t\! \int_{\bbfR^d}\! p_n(s,x,y) |y|^{-\beta-\alpha} \,dy \,ds + \int_{\bbfR^d} \!p_n(t,x,y) |y|^{-\beta} dy = \kappa\int_0^t\! \int_{\bbfR^d} \!p_{n-1}(s,x,y) |y|^{-\beta-\alpha} dy ds.
$$
\end{lemma}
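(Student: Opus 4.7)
The plan is to peel off one layer of the recursive definition of $p_n$ and reduce the claim to Lemma \ref{lem:p_id} applied to the trailing factor of $p$. Specifically, writing
$$p_n(t,x,y) = \int_0^t \int_{\bbfR^d} p_{n-1}(r,x,z)\,q(z)\,p(t-r,z,y)\,dz\,dr,$$
I would multiply by $|y|^{-\beta}$, integrate in $y$, and apply Fubini's theorem to bring the $y$-integral inside. The inner integral
$$\int_{\bbfR^d} p(t-r,z,y)\,|y|^{-\beta}\,dy = |z|^{-\beta} - \kappa_\beta \int_0^{t-r}\int_{\bbfR^d} p(s,z,y)\,|y|^{-\beta-\alpha}\,dy\,ds$$
is then evaluated using Lemma \ref{lem:p_id} (applied for a.e.\ $z \in \bbfR^d$, which suffices).

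The $|z|^{-\beta}$ piece combines with $q(z) = \kappa|z|^{-\alpha}$ to give exactly
$$\kappa \int_0^t \int_{\bbfR^d} p_{n-1}(r,x,z)\,|z|^{-\beta-\alpha}\,dz\,dr,$$
which is the right-hand side of the claim. The remaining piece is
$$-\kappa_\beta \int_0^t \int_{\bbfR^d} p_{n-1}(r,x,z)\,q(z) \left[\int_0^{t-r}\int_{\bbfR^d} p(s,z,y)\,|y|^{-\beta-\alpha}\,dy\,ds\right]dz\,dr.$$
Swapping the order of integration via the substitution $\tau = r + s$ (so that $r \in (0,\tau)$, $\tau \in (0,t)$) converts this into
$$-\kappa_\beta \int_0^t \int_{\bbfR^d}\left[\int_0^\tau \int_{\bbfR^d} p_{n-1}(r,x,z)\,q(z)\,p(\tau-r,z,y)\,dz\,dr\right]|y|^{-\beta-\alpha}\,dy\,d\tau,$$
and the bracketed expression is precisely $p_n(\tau,x,y)$ by \eqref{pn}. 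Rearranging yields the stated identity.

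The main obstacle is the rigorous justification of Fubini at each swap. For $0 < \beta < d-\alpha$, everything in sight is nonnegative and Lemma \ref{lem:int_pn_est} applied at level $n-1$ gives
$$\int_0^t \int_{\bbfR^d} p_{n-1}(r,x,z)\,|z|^{-\beta-\alpha}\,dz\,dr \le \frac{\kappa^{n-1}}{\kappa_\beta^{\,n}}\,|x|^{-\beta} < \infty$$
whenever $x \neq 0$, which controls the right-hand side and hence every intermediate iterated integral. The edge case $\beta = 0$ is handled separately: both sides reduce to $\int_{\bbfR^d} p_n(t,x,y)\,dy = \kappa \int_0^t \int_{\bbfR^d} p_{n-1}(r,x,z)\,|z|^{-\alpha}\,dz\,dr$, which follows directly from Tonelli and $\int p(t-r,z,y)\,dy = 1$, and holds in $[0,\infty]$ regardless of finiteness.
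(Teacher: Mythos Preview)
Your proposal is correct and follows essentially the same route as the paper: expand $p_n$ via \eqref{pn}, apply Lemma~\ref{lem:p_id} to the trailing $p$-factor, change variables in the time integrals, and invoke Lemma~\ref{lem:int_pn_est} for the Fubini/finiteness justification. The only cosmetic difference is the starting point: the paper begins by computing $\kappa_\beta\int_0^t\int p_n(s,x,y)|y|^{-\beta-\alpha}\,dy\,ds$ and arrives at the other two terms, whereas you begin with $\int p_n(t,x,y)|y|^{-\beta}\,dy$; the manipulations are dual and the content is identical.
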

\begin{proof}
By Lemma \ref{lem:p_id}, 
\begin{align*}
&\kappa_\beta\int_0^t \int_{\bbfR^d} p_n(s,x,y) |y|^{-\beta-\alpha} dy ds  \\
&= \kappa_\beta\int_0^t \int_{\bbfR^d} \int_0^s \int_{\bbfR^d} p_{n-1}(u,x,w) q(w)  p(s-u,w,y) |y|^{-\beta-\alpha} \,dw\,du \,dy \,ds  \\
&= \kappa_\beta\int_0^t \int_{\bbfR^d} \int_0^{t-u} \int_{\bbfR^d} p_{n-1}(u,x,w) q(w)  p(s,w,y) |y|^{-\beta-\alpha} \,dy \,ds \,dw \,du \\
&= \int_0^t \int_{\bbfR^d}  p_{n-1}(u,x,w) q(w)  \left(|w|^{-\beta} - \int_{\bbfR^d} p(t-u,w,y)|y|^{-\beta} dy\right)  \,dw \,du \\
& =\kappa\int_0^t   \int_{\bbfR^d}  p_{n-1}(u,x,w) |w|^{-\beta-\alpha} \,dw \,du - \int_{\bbfR^d} p_n(t,x,y) |y|^{-\beta} dy .
\end{align*}
The integrals are convergent by Lemma \ref{lem:int_pn_est}. 
\end{proof}
Summing up the equalities from Lemma \ref{lem:pn_id} and \ref{lem:p_id} and using Corollary \ref{cor:tp_bas_est2} yields \eqref{eq:1} in Theorem \ref{thm:1} for $\delta <\beta <d-\alpha -\delta$. The full range of $\beta$ needs additional preparation.

\begin{corollary}\label{cor:pn_id}
For $N\in \bbfN$, $t>0$, $x\not=0$ and $\beta \in [0, d-\alpha)$,
\begin{align}
& \int_{\bbfR^d} \sum_{n=0}^N p_n(t,x,y) |y|^{-\beta} dy + \kappa_\beta \int_0^t \int_{\bbfR^d} p_N(s,x,y)  |y|^{-\beta-\alpha} \,dy \,ds \notag \\
&= |x|^{-\beta} + (\kappa - \kappa_\beta)\int_0^t \int_{\bbfR^d} \sum_{n=0}^{N-1} p_n(s,x,y) |y|^{-\beta-\alpha} dy ds. \label{eq:sum_pn_id}
\end{align}
\end{corollary}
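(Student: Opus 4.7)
The plan is to obtain the identity \eqref{eq:sum_pn_id} as a telescoping sum of the identities already established in Lemma \ref{lem:p_id} (for the base term $p_0=p$) and Lemma \ref{lem:pn_id} (for $p_n$ with $n\ge 1$). No new analytic input should be needed; the only thing to be careful about is that every integral we rearrange is finite, which is exactly what Lemma \ref{lem:int_pn_est} provides for $x\ne 0$.

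First, I would write down Lemma \ref{lem:p_id} in the form
\begin{equation*}
\kappa_\beta \int_0^t\!\int_{\bbfR^d} p_0(s,x,y)|y|^{-\beta-\alpha}\,dy\,ds + \int_{\bbfR^d} p_0(t,x,y)|y|^{-\beta}\,dy = |x|^{-\beta},
\end{equation*}
and for each $n\in\{1,\dots,N\}$, Lemma \ref{lem:pn_id} in the form
\begin{equation*}
\kappa_\beta \int_0^t\!\int_{\bbfR^d} p_n(s,x,y)|y|^{-\beta-\alpha}\,dy\,ds + \int_{\bbfR^d} p_n(t,x,y)|y|^{-\beta}\,dy = \kappa \int_0^t\!\int_{\bbfR^d} p_{n-1}(s,x,y)|y|^{-\beta-\alpha}\,dy\,ds.
\end{equation*}
Summing these $N+1$ equalities, the left-hand side becomes
\begin{equation*}
\kappa_\beta \sum_{n=0}^N \int_0^t\!\int_{\bbfR^d} p_n(s,x,y)|y|^{-\beta-\alpha}\,dy\,ds + \sum_{n=0}^N \int_{\bbfR^d} p_n(t,x,y)|y|^{-\beta}\,dy,
\end{equation*}
while the right-hand side becomes
\begin{equation*}
|x|^{-\beta} + \kappa \sum_{n=0}^{N-1}\int_0^t\!\int_{\bbfR^d} p_n(s,x,y)|y|^{-\beta-\alpha}\,dy\,ds,
\end{equation*}
after reindexing the shifted sum.

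Now I would split off the $n=N$ summand in the $\kappa_\beta$-sum on the left, leaving $\kappa_\beta \int_0^t\!\int p_N(s,x,y)|y|^{-\beta-\alpha}$ on the left and subtracting $\kappa_\beta \sum_{n=0}^{N-1} \int_0^t\!\int p_n(s,x,y)|y|^{-\beta-\alpha}$ from the right. All involved double integrals are finite by Lemma \ref{lem:int_pn_est} (applied with $\beta$ there equal to our $\beta$, for each fixed $n$), which justifies this rearrangement termwise. Combining the two resulting sums on the right yields the factor $(\kappa-\kappa_\beta)$ in front of $\sum_{n=0}^{N-1}\int_0^t\!\int p_n(s,x,y)|y|^{-\beta-\alpha}\,dy\,ds$, producing exactly \eqref{eq:sum_pn_id}.

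Since the argument is purely algebraic once the two preceding lemmas are in hand, I do not foresee any genuine obstacle; the only substantive point is the bookkeeping of integrability, and that was arranged precisely so that this step is legal. One could equivalently present this as induction on $N$, where the inductive step is a single application of Lemma \ref{lem:pn_id} with index $N+1$, but the telescoping presentation is slightly cleaner and avoids repeating the same rearrangement.
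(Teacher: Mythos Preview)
Your proposal is correct and matches the paper's proof: both sum the identities from Lemma~\ref{lem:p_id} and Lemma~\ref{lem:pn_id} for $n=1,\dots,N$ and then subtract the common $\kappa_\beta$-terms. The only difference is that the paper cites Corollary~\ref{cor:tp_bas_est2} for the finiteness needed to subtract, whereas you cite Lemma~\ref{lem:int_pn_est}; your choice is in fact the cleaner one, since Lemma~\ref{lem:int_pn_est} covers the full range $0<\beta<d-\alpha$ (and for $\beta=0$ one has $\kappa_\beta=0$, so no subtraction is needed).
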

\begin{proof}
For $n=1,2,..., N$ we sum up the identity from Lemma \ref{lem:pn_id}, add the identity from Lemma \ref{lem:p_id}, and use the finiteness from Corollary \ref{cor:tp_bas_est2} to justify subtraction.
\end{proof}
\begin{proof}[Proof of Theorem \ref{thm:1}]
We start with \eqref{eq:1}. In view of Corollary \ref{cor:pn_id}, 
we only need to show 
\begin{align}\label{eq:pNto0}
\lim_{N \to \infty}\int_0^t \int_{\bbfR^d} p_N(s,x,y)  |y|^{-\beta-\alpha} dy ds = 0.
\end{align}
Indeed, if we have \eqref{eq:pNto0}, then letting $N \to \infty$ in \eqref{eq:sum_pn_id}, by the monotone convergence theorem, we get
\begin{align*}
\int_{\bbfR^d} \tp(t,x,y) |y|^{-\beta} dy = |x|^{-\beta} + (\kappa - \kappa_\beta)\int_0^t \int_{\bbfR^d} \tp(s,x,y) |y|^{-\beta-\alpha} dy ds.
\end{align*}
The integrals are convergent for $\beta \in [0,\delta]$. We apply \eqref{eq:tp_bas_est} and Proposition \ref{lem:int_tpinfty} to the left integral, and for $\beta \in (\delta, d-\alpha -\delta)$ we apply Corollary \ref{cor:tp_bas_est2} to the integral on the right.

Proving \eqref{eq:pNto0} is nontrivial. Of course, by Lemma \ref{pt_h}, for $t,|x| >0$ we have $\tp(t,x,\cdot) <\infty$ $a.e.$ Hence, $p_N(t,x,y) \to 0$ as $N \to \infty$. To apply the dominated convergence theorem it suffices to give an integrable majorant for $p_N(t,x,y)$ in \eqref{eq:pNto0}.

First we let $\beta \in(\delta,d-\alpha-\delta)$. Since $p_N(t,x,y) \le \tp(t,x,y)$, Corollary \ref{cor:tp_bas_est2} yields \eqref{eq:pNto0}.
Now let $\beta < \delta$. By Lemma \ref{lem:int_pn_est} with $\beta = \delta$,  \eqref{eq:Df2} integrated in $dy$ 
and Proposition \ref{lem:int_tpinfty},
\begin{align*}
\int_0^t \!\int_{\bbfR^d} \! p_N(s,x,y)  |y|^{-\beta-\alpha} dy ds & \le \int_0^t \! \int_{\bbfR^d} \! p_N(s,x,y)  |y|^{-\delta-\alpha} dy ds + \int_0^t \!\int_{\bbfR^d} \!\tp(s,x,y)|y|^{-\alpha}   dy ds \\
& \le \frac{1}{\kappa}\left(|x|^{-\delta} + \int_{\bbfR^d} \! \tp(t,x,y)\,dy\right)<\infty .
\end{align*}
Hence, by Corollary \ref{cor:pn_id} with $\beta<\delta$,
\begin{align*}
  (\kappa - \kappa_\beta )\!\int\limits_0^t \!\int\limits_{\bbfR^d} \sum\limits_{n=0}^{N-1}  \!p_n(s,x,y)|y|^{-\beta-\alpha} dy ds \! \le \! \int\limits_{\bbfR^d}\!\tp(s,x,y)|y|^{-\beta} dy \!+\!\frac{\kappa_\beta}{\kappa}\left(\!|x|^{-\delta}\!+\!\int\limits_{\bbfR^d}\!\tp(t,x,y) dy\!\right).
\end{align*}
By passing to the limit and using $|y|^{-\beta}\le |y|^{-\delta}+1$ we get
\begin{align*}\label{eq:1tmp1}
(\kappa - \kappa_\beta)\! \int\limits_0^t\! \int\limits_{\bbfR^d}\! \tp(s,x,z)  |z|^{-\beta-\alpha} dz ds & \!\le\! \int\limits_{\bbfR^d} \!\tp(t,x,y)(1\!+\!|y|^{-\delta})\,dy\! +\! \frac{\kappa_\beta}{\kappa}\left(\!|x|^{-\delta}\!+\!\int\limits_{\bbfR^d}\!\tp(t,x,y) dy\!\right),
\end{align*}
which is finite by \eqref{eq:tp_bas_est}  and Lemma \ref{lem:int_tpinfty}. Since $p_N(t,x,y) \le \tp(t,x,y)$, we get \eqref{eq:pNto0}.

Next, let $\beta = \delta< \delta^* =\frac{d-\alpha}{2}$.
Clearly, $p_N\le \tp \le \tp^*$. By the previous case we get
\begin{align*}
 \int_0^t \int_{\bbfR^d} \tp^*(s,x,z) |z|^{-\delta-\alpha} \,dz \,ds <\infty,
\end{align*}
hence \eqref{eq:pNto0} follows. 
We see that \eqref{eq:1} is fully proven. This yields \eqref{eq:2} for $\delta \in[0,\frac{d-\alpha}{2})$. 
For $\delta = \delta^*$ we let $0\le \beta < \delta$. As usual,  $|y|^{-\beta}\le |y|^{-\delta}+1$, and 
$$
  \int_{\bbfR^d} \tp(t,x,y) (|y|^{-\delta} +1) dy < \infty.
$$
By \eqref{eq:1},
$$
\int_{\bbfR^d} \tp(t,x,y)|y|^{-\beta} dy \ge |y|^{-\beta},
$$
hence, letting $\beta \to \delta$, by the dominated convergence theorem we get  
$$
\int_{\bbfR^d} \tp(t,x,y) |y|^{-\delta} dy \ge |x|^{-\delta}.
$$
In view of  \eqref{eq:tp_bas_est} we get \eqref{eq:2} for $\delta = \delta^*$ and $\tp=\tp^*$, too.
\end{proof}

We denote
\begin{equation*}\label{H_func}
   H (t,x) = t^{ \frac{\delta}{\alpha }} |x|^{-\delta} + 1,\qquad  t>0, \quad x\in \Rd.
\end{equation*}
We also let $H(x)=H(1,x)$. Thus, 
$$H(x) = |x|^{-\delta} +1, \qquad x\in \Rd.$$ 
Proposition~\ref{lem:int_tpinfty} and Theorem~\ref{thm:1} yield that $H(t,x)$ is nearly supermedian, as follows.
\begin{corollary}\label{cor:sh}
For all $t \in (0,\infty)$ and $x \in \bbfR^d \setminus \{0\}$,
\begin{align*}\label{eq:sh}
	\int_{\bbfR^d} \tp(t,x,y) H(t,y)dy \le (M+1) H(t,x).
\end{align*}
\end{corollary}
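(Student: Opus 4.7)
The plan is to simply combine the two results that immediately precede the corollary, namely the identity \eqref{eq:2} in Theorem~\ref{thm:1} and the bound in Proposition~\ref{lem:int_tpinfty}, exploiting the linearity of $H(t,y)=t^{\delta/\alpha}|y|^{-\delta}+1$ in the two functions $|y|^{-\delta}$ and $1$.

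First, I would split the integral as
\begin{equation*}
\int_{\bbfR^d}\tp(t,x,y)H(t,y)\,dy = t^{\delta/\alpha}\int_{\bbfR^d}\tp(t,x,y)|y|^{-\delta}\,dy + \int_{\bbfR^d}\tp(t,x,y)\,dy.
\end{equation*}
The first term equals $t^{\delta/\alpha}|x|^{-\delta}$ by \eqref{eq:2} in Theorem~\ref{thm:1}, and the second is bounded by $M(1+t^{\delta/\alpha}|x|^{-\delta})$ by Proposition~\ref{lem:int_tpinfty}. Adding these gives
\begin{equation*}
\int_{\bbfR^d}\tp(t,x,y)H(t,y)\,dy \le t^{\delta/\alpha}|x|^{-\delta}+M\bigl(1+t^{\delta/\alpha}|x|^{-\delta}\bigr) \le (M+1)\bigl(t^{\delta/\alpha}|x|^{-\delta}+1\bigr) = (M+1)H(t,x),
\end{equation*}
which is exactly the desired inequality.

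There is no real obstacle here; both necessary ingredients have already been established. The only mild subtlety is that \eqref{eq:2} is an equality (not merely an upper bound), which is what allows the scaling factor $t^{\delta/\alpha}$ to combine cleanly with the bound from Proposition~\ref{lem:int_tpinfty} to produce a constant times $H(t,x)$ on the right, rather than something larger. Alternatively, one could first reduce to $t=1$ by the self-similarity \eqref{ss form} of $\tp$ (Lemma~\ref{ss th}), but this is not necessary since the two prior statements are already formulated for all $t>0$.
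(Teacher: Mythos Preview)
Your proof is correct and matches the paper's approach exactly: the paper simply states that the corollary follows from Proposition~\ref{lem:int_tpinfty} and Theorem~\ref{thm:1}, which is precisely the splitting you carry out. One minor remark: for this corollary you only need the inequality $\int_{\bbfR^d}\tp(t,x,y)|y|^{-\delta}\,dy \le |x|^{-\delta}$ (i.e., Lemma~\ref{pt_h} or \eqref{eq:tp_bas_est}), not the full equality \eqref{eq:2}, so your comment about the equality being essential is not quite accurate.
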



\section{Estimates and continuity of the Schr\"odinger heat kernel}\label{sec:pt1}

\subsection{Upper bound  of the heat kernel}

As usual, $\delta \in [0, (d-\alpha )/2]$. 
By Corollary \ref{cor:sh}, we immediately get the following result.

\begin{lemma}\label{cor:estintHbis}
If $\beta \in [0, \delta]$, then
$$
  \int_{\bbfR^d} \tp(1,x,z) |z|^{-\beta} dz \le CH(x) .
$$
\end{lemma}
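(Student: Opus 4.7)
The lemma is essentially a direct specialization of Corollary~\ref{cor:sh} at time $t=1$. The plan is to reduce the bound on $\int \tp(1,x,z)|z|^{-\beta}\,dz$ to the already-established bound on $\int \tp(1,x,z)H(1,z)\,dz$ by means of an elementary pointwise estimate on $|z|^{-\beta}$.

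The key observation is that for every $\beta\in[0,\delta]$ and every $z\neq 0$,
\begin{equation*}
|z|^{-\beta}\le |z|^{-\delta}+1 = H(1,z).
\end{equation*}
Indeed, when $|z|\ge 1$ one has $|z|^{-\beta}\le 1$ (since $\beta\ge 0$), and when $|z|<1$ one has $|z|^{-\beta}\le |z|^{-\delta}$ (since $\beta\le\delta$); in either case the right-hand side dominates.

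With this inequality in hand, the proof is immediate: integrating against $\tp(1,x,z)\,dz$ and applying Corollary~\ref{cor:sh} at $t=1$ gives
\begin{equation*}
\int_{\bbfR^d}\tp(1,x,z)|z|^{-\beta}\,dz \le \int_{\bbfR^d}\tp(1,x,z)H(1,z)\,dz \le (M+1)H(1,x) = (M+1)H(x),
\end{equation*}
so one may take $C = M+1$, with $M$ the constant from Proposition~\ref{lem:int_tpinfty}. There is no genuine obstacle here; the only thing to verify is the pointwise comparison $|z|^{-\beta}\le H(1,z)$, which is handled by splitting into the two cases $|z|\ge 1$ and $|z|<1$. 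Alternatively, one could avoid invoking Corollary~\ref{cor:sh} directly and split the integral into $\int \tp(1,x,z)|z|^{-\delta}\,dz \le |x|^{-\delta}$ (from \eqref{eq:2} of Theorem~\ref{thm:1}) plus $\int\tp(1,x,z)\,dz \le M(1+|x|^{-\delta})$ (from Proposition~\ref{lem:int_tpinfty}), arriving at the same conclusion.
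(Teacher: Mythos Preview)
Your proof is correct and takes essentially the same approach as the paper, which simply states that the lemma follows immediately from Corollary~\ref{cor:sh}; you have merely spelled out the pointwise inequality $|z|^{-\beta}\le H(1,z)$ that makes this immediate.
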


Let $g(y)=|y|^\alpha/(2^{\alpha+1}\kappa)$. Note that $|y| \le 2(2\kappa)^{1/\alpha}$ is equivalent to $g(y) \le 1$.

\begin{lemma}\label{lem:tp_est_prel}
There exists a constant $c$ such that for $y\in\Rd$ and $|x|\leq 2(2\kappa)^{1/\alpha}$,
\begin{align*}\tp(1,x,y)\leq& c H(x)p_1(y) +c\int^{1/2}_0\int_{2|z|\leq |y|}\tp(1-s,x,z)q(z)p_s(y)dzds\\&+2\int^1_{g(y)\wedge 1}\int_{2|z|>|y|} \tp(1-s,x,z)q(z)p(s,z,y)dzds.
\end{align*}
\end{lemma}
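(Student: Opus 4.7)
My plan is to start from Duhamel's formula in the form
\[
\tp(1,x,y) = p(1,x,y) + \int_0^1\!\int_{\bbfR^d} \tp(1-s,x,z)\,q(z)\,p(s,z,y)\,dz\,ds
\]
(obtained from \eqref{eq:Df2} by the change of variable $s\mapsto 1-s$), and to decompose the integral by splitting the spatial region at $\{2|z|\le|y|\}$ versus $\{2|z|>|y|\}$, and the time interval at either $1/2$ or $g(y)\wedge 1$. Call the resulting four pieces $J_1$ (small $z$, $s\in[0,1/2]$), $J_2$ (small $z$, $s\in[1/2,1]$), $J_3$ (large $z$, $s\in[0,g(y)\wedge 1]$) and $J_4$ (large $z$, $s\in[g(y)\wedge 1,1]$). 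The third term of the target bound is $2J_4$, so the task is to control $p(1,x,y)+J_1+J_2+J_3$ by the first two terms, with $J_3$ being self-absorbed into the left-hand side.

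For the small-$z$ pieces $J_1,J_2$, the condition $2|z|\le|y|$ forces $|y-z|\ge |y|/2$, so by \eqref{eq:oppt} there is $c$ (depending only on $d,\alpha$) such that $p(s,z,y)\le c\, p_s(y)$ on this region. Using this estimate in $J_1$ immediately yields the second term of the bound. For $J_2$, since $s\in[1/2,1]$ the two-sided estimate \eqref{eq:oppt} gives $p_s(y)\approx p_1(y)$, so after the change $u=1-s\in[0,1/2]$,
\[
J_2 \le c\,p_1(y)\int_0^{1/2}\!\int_{\bbfR^d}\tp(u,x,z)\,q(z)\,dz\,du.
\]
Theorem~\ref{thm:1} with $\beta=0$ rewrites the inner integral as $\int\tp(1/2,x,z)\,dz - 1$, which Proposition~\ref{lem:int_tpinfty} bounds by $MH(1/2,x)\le MH(x)$; hence $J_2\le c'H(x)p_1(y)$. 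The term $p(1,x,y)$ is handled by the hypothesis $|x|\le 2(2\kappa)^{1/\alpha}$: for $|y|\ge 4(2\kappa)^{1/\alpha}$ we have $|y-x|\ge |y|/2$ and the polynomial tail of $p_1$ gives $p_1(y-x)\le cp_1(y)$, while for $|y|< 4(2\kappa)^{1/\alpha}$ both $p_1(y-x)$ and $1/p_1(y)$ are bounded on the relevant compact set; thus $p(1,x,y)\le cp_1(y)\le cH(x)p_1(y)$ since $H(x)\ge 1$.

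For the large-$z$ piece $J_3$, the defining inequality $2|z|>|y|$ gives $q(z)=\kappa|z|^{-\alpha}\le 2^\alpha\kappa|y|^{-\alpha}$, so
\[
J_3 \le 2^\alpha\kappa\,|y|^{-\alpha}\int_0^{g(y)\wedge 1}\!\int_{\bbfR^d}\tp(1-s,x,z)\,p(s,z,y)\,dz\,ds.
\]
Because $p\le\tp$ (from $\tp=\sum_{n\ge 0}p_n$ with $p_0=p\ge 0$ and $p_n\ge 0$), the Chapman--Kolmogorov identity \eqref{eq:ChK} gives $\int\tp(1-s,x,z)p(s,z,y)\,dz\le\tp(1,x,y)$; using also $g(y)\wedge 1\le g(y)=|y|^\alpha/(2^{\alpha+1}\kappa)$,
\[
J_3\le 2^\alpha\kappa\,|y|^{-\alpha}\cdot g(y)\cdot\tp(1,x,y)=\tfrac{1}{2}\tp(1,x,y),
\]
which can be absorbed into the left-hand side, at the cost of doubling every other term. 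The factor $2$ in front of $J_4$ in the statement reflects precisely this absorption.

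I expect the trickiest ingredient to be not any single estimate but the bookkeeping: choosing the right time cut-offs so that the two ``diagonal'' pieces ($J_2$ and $J_3$) can be bounded by $H(x)p_1(y)$ and absorbed, respectively. The self-absorption in $J_3$ hinges on the algebraic identity behind the definition $g(y)=|y|^\alpha/(2^{\alpha+1}\kappa)$, while the $H(x)p_1(y)$ control of $J_2$ depends crucially on the integral identity of Theorem~\ref{thm:1} for $\beta=0$ together with the nearly supermedian estimate of Proposition~\ref{lem:int_tpinfty}; the hypothesis $|x|\le 2(2\kappa)^{1/\alpha}$ enters only to pass from $p(1,x,y)$ to $p_1(y)$ and to identify $H(1/2,x)$ with $H(x)$ up to constants.
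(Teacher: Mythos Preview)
Your proof is correct and follows essentially the same route as the paper: the decomposition into $J_1,\ldots,J_4$ matches the paper's $\mathrm{I}_1,\mathrm{I}_2,\mathrm{I}_3$ (with $\mathrm{I}_3$ split at $s=1/2$), the self-absorption of $J_3$ via $q(z)\le 2^\alpha\kappa|y|^{-\alpha}$ and Chapman--Kolmogorov is identical, and your bound for $J_2$ using Theorem~\ref{thm:1} with $\beta=0$ is exactly what the paper packages as \eqref{eq:tp_q} (obtained equivalently by integrating \eqref{eq:Df2} in $y$ and applying Proposition~\ref{lem:int_tpinfty}). The only cosmetic difference is that the paper states $p(1,x,y)\approx p_1(y)$ in one line from \eqref{eq:oppt}, whereas you spell out the two cases $|y|\lessgtr 4(2\kappa)^{1/\alpha}$.
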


\begin{proof}

 We will estimate
\begin{align*}
\tp(1,x,y) &=  p(1,x,y) + \mathrm{I}_1 + \mathrm{I}_2 + \mathrm{I}_3,
\end{align*}
where
\begin{align*}
\mathrm{I}_1 &= \int_0^{g(y)\wedge 1} \int_{2|z| > |y|} \tp(1-s,x,z) q(z) p(s,z,y) dz ds,\\
\mathrm{I}_2 &= \int^1_{g(y)\wedge 1} \int_{2|z| > |y|} \tp(1-s,x,z) q(z) p(s,z,y) dz ds,\\
\mathrm{I}_3& = \int_{0}^1 \int_{2|z| \le |y|} \tp(1-s,x,z) q(z) p(s,z,y) dz ds. 
\end{align*}
Clearly, by \eqref{eq:ChK},
$$\mathrm{I}_1\leq q(|y|/2)\int^{g(y)\wedge 1}_0\int_{\Rd} \tp(1-s,x,z)\tp(s,z,y)dzds\leq\frac{1}{2}\tp(1,x,y).$$
Due to Proposition \ref{lem:int_tpinfty}, by integrating \eqref{eq:Df2} against $y$, we get
\begin{equation}\label{eq:tp_q}
\int_{0}^1 \int_{\Rd} \tp(1-s,x,z) q(z) dz\leq M H(x).
\end{equation}
By \eqref{eq:oppt}, we have $p(s,z,y)\approx p_s(y)$ for $|z|\leq |y|/2$ and $p_s(y)\approx p_1(y)$ for $s\in[1/2,1]$. Hence,
\begin{align*}
\mathrm{I}_3\approx &\int_{0}^1 \int_{2|z| \le |y|} \tp(1-s,x,z) q(z) p_s(y) dz ds\\
\approx& \int_{0}^{1/2} \int_{2|z| \le |y|} \tp(1-s,x,z) q(z) p_s(y) dz ds+p_1(y)\,M\,H(x).
\end{align*}
Since $|x|\leq 2(2\kappa)^{1/\alpha}$, by \eqref{eq:oppt} we have $p(1,x,y)\approx p_1(y)$,  which ends the proof.

\end{proof}

\begin{lemma}\label{lem:ESTtp0}
There is $C>0$ such that for $|y| \ge 2(2\kappa)^{1/\alpha}$ and $x\in\Rd$,  
\begin{equation*}
	\tp(1,x,y) \le  CH(x)p(1,x,y).
\end{equation*}
\end{lemma}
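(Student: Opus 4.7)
My plan is to expand $\tp(1,x,y)$ via Duhamel, split the space integral at $|z|=|y|/2$, and handle the two halves separately. Set $\eta=2(2\kappa)^{1/\alpha}$; the hypothesis $|y|\ge\eta$ gives $q(z)\le 2^\alpha\kappa|y|^{-\alpha}\le 1/2$ on $\{|z|>|y|/2\}$. Combined with $p\le\tp$ and \eqref{eq:ChK} this absorbs the far region into $\tfrac12\tp(1,x,y)$, so from \eqref{eq:Df2} I obtain
\begin{equation*}
\tp(1,x,y)\le 2\,p(1,x,y)+2\!\int_0^1\!\!\int_{|z|\le|y|/2}\!\!\tp(1-s,x,z)\,q(z)\,p(s,z,y)\,dz\,ds,
\end{equation*}
and the analogous bound with the integrand $p(s,x,z)\,q(z)\,\tp(1-s,z,y)$ follows from \eqref{eq:Df}.

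A small elementary ingredient I shall need is the comparison $p_s(v)\le C_\eta p_1(v)$ for $|v|\ge\eta$ and $s\in(0,1]$, immediate from \eqref{eq:oppt} by examining $s\le|v|^\alpha$ and $s>|v|^\alpha$ separately. Since $|z|\le|y|/2$ implies $|y-z|\approx|y|$, this yields $p(s,z,y)\approx p_s(y)\le Cp_1(y)$ throughout the near region; symmetrically, whenever $|x-z|\approx|x|\ge\eta$, $p(s,x,z)\le Cp_1(x)$.

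In the case $|x|\le 2|y|$ one has $|y-x|\le 3|y|$, so radial monotonicity of $p_1$ gives $p_1(y)\le c^{-1}p(1,x,y)$, and hence $p(s,z,y)\le Cp(1,x,y)$ uniformly in $s\in(0,1]$ and $|z|\le|y|/2$. Pulling this factor out of the near integral and applying \eqref{eq:tp_q} yields the bound $CMH(x)p(1,x,y)$, which closes this case. In the case $|x|>2|y|$ one has instead $|y-x|\approx|x|$, so $p(1,x,y)\approx p_1(x)$, while $|z|\le|y|/2$ forces $|x-z|\ge 3|x|/4\approx|x|$ (with $|x|>2\eta$), so $p(s,x,z)\le Cp_1(x)\approx Cp(1,x,y)$. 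Using the companion bound together with the symmetry of $\tp$, \eqref{eq:tp_q} applied with $y$ in place of $x$, and $H(y)\le 1+\eta^{-\delta}$, the near integral is bounded by $Cp(1,x,y)$, which combined with $H(x)\ge 1$ closes this case as well.

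The main obstacle will be the case dichotomy itself. Neither form of Duhamel alone suffices: the estimate $p(s,z,y)\le Cp(1,x,y)$ used via \eqref{eq:Df2} fails when $x$ sits much farther from the origin than $y$, and the symmetric obstruction appears for \eqref{eq:Df} in the reverse regime. Matching the form of Duhamel to the relative size of $|x|$ and $|y|$, so that the deterministic factor of $p$ inside the integrand is always the one comparable to $p(1,x,y)$, is the key structural point; everything else amounts to playing the lower bound $|y|\ge\eta$ against \eqref{eq:tp_q} and \eqref{eq:oppt}.
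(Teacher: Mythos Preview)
Your argument is correct. The paper's proof is organized differently: it splits according to whether $|x|$ is above or below the fixed threshold $2(2\kappa)^{1/\alpha}$. For small $|x|$ it invokes the preparatory Lemma~\ref{lem:tp_est_prel} (noting that $g(y)\ge1$ kills the third term there), and for $|x|,|y|$ both large it uses \eqref{eq2:int_tpinfty} with $R=(2\kappa)^{1/\alpha}$ twice, by symmetry, to obtain $\tp(1,x,y)\le 2p(1,x,y)+c\big(p_1(y)H(x)\wedge p_1(x)H(y)\big)$ and then observes that the minimum is comparable to $p(1,x,y)$. Your route is more self-contained: you absorb at the moving radius $|z|=|y|/2$ rather than a fixed $R$, and you replace the paper's min-of-two-bounds step by a dichotomy on $|x|\lessgtr 2|y|$ in which you deliberately pick the form of Duhamel, \eqref{eq:Df2} or \eqref{eq:Df}, so that the \emph{unperturbed} factor in the integrand is the one comparable to $p(1,x,y)$. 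This avoids any appeal to Lemma~\ref{lem:tp_est_prel}. The paper's organization pays off elsewhere, since Lemma~\ref{lem:tp_est_prel} is reused in Lemmas~\ref{lem:ESTtp1} and \ref{lem:ESTtp2}; for the present lemma alone your argument is the shorter one. One cosmetic point: your inequality $p_s(v)\le C_\eta p_1(v)$ is actually needed for $|v|\ge \eta/2$ (since $|y-z|\ge|y|/2$ and, in the second case, $|x-z|\ge 3|x|/4>3\eta/2$), but this only changes the constant.
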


\begin{proof}
By \eqref{eq:oppt}, $p_s(y)\le c p_1(y)$ for $s\leq 1$. Hence by \eqref{eq:tp_q},
\begin{align*}
\int^{1/2}_0\int_{2|z|\leq |y|}\tp(1-s,x,z)q(z)p_s(y)dzds \leq  H(x) |y|^{-d-\alpha}.
\end{align*}
Since $g(y) \ge 1$, Lemma \ref{lem:tp_est_prel} implies $$\tp(1,x,y) \le  CH(x)p_1(y),\quad|x|\leq 2(2\kappa)^{1/\alpha}\leq|y|.$$
The symmetry of $\tp$,  \eqref{eq2:int_tpinfty} with $R=(2\kappa)^{1/\alpha}$, \eqref{eq:oppt} and \eqref{eq:tp_q} imply
\begin{align*}
\tp(1,x,y)& \leq 2p(1,x,y)+c[p_1(y)H(x)\wedge p_1(x)H(y)] \\
& \approx p(1,x,y) +\frac{1}{(|x|+|y|)^{d+\alpha}}\approx p(1,x,y), \qquad |x|,|y| \ge2(2\kappa)^{1/\alpha}.
\end{align*}
The proof is complete.

\end{proof}

\begin{lemma}\label{lem:ESTtp1} 
 For every $\gamma \in (0,\delta)$ there is a constant $C_\gamma$ such that 
\begin{align*}
	\tp(1,x,y) \le  C_\gamma\left(H(x)|y|^{\gamma-d} \land H(y)|x|^{\gamma-d} \right),\quad |x|, |y| \leq 2(2\kappa)^{1/\alpha}. 
\end{align*}
\end{lemma}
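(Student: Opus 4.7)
By the symmetry $\tp(1,x,y)=\tp(1,y,x)$ it suffices to prove the one-sided bound $\tp(1,x,y)\le C_\gamma H(x)|y|^{\gamma-d}$ for $|x|,|y|\le R:=2(2\kappa)^{1/\alpha}$; the other half of the $\wedge$ follows by swapping $x$ and $y$. A useful observation is that in our range and for $\gamma<\delta\le(d-\alpha)/2<d-\delta$, one has $H(y)\le C|y|^{\gamma-d}$, so any bound of the form $CH(x)H(y)$ (or $CH(x)$) is automatically of the desired form.

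The starting point is Lemma~\ref{lem:tp_est_prel}; since $|y|\le R$ forces $g(y)\le 1$, it gives $\tp(1,x,y)\le cH(x)p_1(y)+(\mathrm{B})+(\mathrm{C})$, where (B) is the inner integral on $\{|z|\le|y|/2,\,s\in(0,\tfrac12)\}$ and (C) the outer integral on $\{|z|>|y|/2,\,s\in(g(y),1)\}$. The term $cH(x)p_1(y)$ is immediate from $p_1(y)\le p_1(0)$ and $|y|^{\gamma-d}\ge R^{\gamma-d}$. For (B) the two elementary inequalities $p_s(y)\le c|y|^{-d}$ (from \eqref{eq:oppt}) and $|z|^{-\alpha}\le|y|^\gamma|z|^{-\alpha-\gamma}$ (because $|z|^\gamma\le(|y|/2)^\gamma$ on the region of integration) factor $|y|^{\gamma-d}$ out of the integral and leave $\int_0^1\!\!\int\tp(r,x,z)|z|^{-\alpha-\gamma}\,dz\,dr$. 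Applying Theorem~\ref{thm:1} with $\beta=\gamma$ (valid since $\gamma<\delta\le(d-\alpha)/2\le d-\alpha-\delta$ and $\kappa>\kappa_\gamma$) rewrites this as $(\kappa-\kappa_\gamma)^{-1}(\int\tp(1,x,w)|w|^{-\gamma}\,dw-|x|^{-\gamma})$, bounded by $CH(x)$ via Lemma~\ref{cor:estintHbis}.

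Piece (C) is the delicate one and is split three ways. First, on $\{|z|>2R\}$ the inequalities $|z-y|\ge|z|/2$, $q(z)\le\kappa(2R)^{-\alpha}$, $p(s,z,y)\le cs|z|^{-d-\alpha}\le cs(2R)^{-d-\alpha}$ together with Proposition~\ref{lem:int_tpinfty} and $\int_0^1 s\,ds=1/2$ produce a clean $CH(x)$ bound, absorbed into $CH(x)|y|^{\gamma-d}$. Second, on $\{|y|/2<|z|\le 2R,\,s\in[1/2,1]\}$, $p(s,z,y)$ is uniformly bounded, so the contribution is $\le c\int_0^1\!\!\int\tp(r,x,z)q(z)\,dz\,dr\le cMH(x)$ by \eqref{eq:tp_q}. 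The subtle third sub-case is $\{|y|/2<|z|\le 2R,\,s\in[g(y),1/2]\}$: here $s\ge g(y)=c|y|^\alpha$ forces $s^{1/\alpha}\ge c|y|$, so $p(s,z,y)\le cs(|y|+|z-y|)^{-d-\alpha}$, and the AM--GM–type inequality $(|y|+|z-y|)^{d+\alpha}\ge|y|^{d-\gamma}|z-y|^{\alpha+\gamma}$ pulls out the factor $|y|^{\gamma-d}$. After integrating $s$ and substituting $r=1-s$, the remaining integral
\[
\int_0^1\!\!\int_{|y|/2<|z|\le 2R}\tp(r,x,z)|z|^{-\alpha}|z-y|^{-\alpha-\gamma}\,dz\,dr
\]
is handled by splitting at $|z-y|\sim|y|$: on the far side $|z-y|^{-\alpha-\gamma}\le c|y|^{-\alpha-\gamma}$ and the AM--GM exponents may be re-chosen (e.g.\ $(|y|+|z-y|)^{d+\alpha}\ge|y|^{d+\alpha-\gamma}|z-y|^\gamma$) to cancel; on the near side $|z|\approx|y|$, so the weights rearrange into $|z|^{-\alpha-\gamma'}$ for some $\gamma'\le\delta$, and Theorem~\ref{thm:1} together with Lemma~\ref{cor:estintHbis} bounds the result by $CH(x)$.

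The main obstacle is precisely this last sub-case: the naive diagonal bound $p(s,z,y)\le cs^{-d/\alpha}$ loses a factor $|y|^\gamma$ and yields only the weaker estimate $\tp(1,x,y)\le CH(x)|y|^{-d}$. The correct strategy is to exploit the lower bound $s^{1/\alpha}\gtrsim|y|$ via the unified off-diagonal bound $p(s,z,y)\le cs(|y|+|z-y|)^{-d-\alpha}$, choose the AM--GM split of $(|y|+|z-y|)^{d+\alpha}$ whose exponents match the weights generated by $q(z)$ and by Theorem~\ref{thm:1}, and then recover the missing $|y|^\gamma$ via the space–time integral identity \eqref{eq:1}.
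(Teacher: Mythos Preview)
Your treatment of the first term and of (B) matches the paper. The gap is in (C), specifically your ``subtle third sub-case'' $\{|y|/2<|z|\le 2R,\ s\in[g(y),1/2]\}$. After your AM--GM split you are left with
\[
\int_0^1\!\!\int_{|y|/2<|z|\le 2R}\tp(r,x,z)\,|z|^{-\alpha}\,|z-y|^{-\alpha-\gamma}\,dz\,dr,
\]
and on the near side $|z-y|<|y|/4$ the weight $|z-y|^{-\alpha-\gamma}$ is singular at $z=y$. This singularity cannot be ``rearranged'' into a power of $|z|$: the only integral tools available at this stage (Theorem~\ref{thm:1}, Lemma~\ref{cor:estintHbis}, Corollary~\ref{cor:tp_bas_est2}) control $\int_0^1\!\int\tp(r,x,z)|z|^{-\beta}\,dz\,dr$ for powers centered at the origin, and you have no pointwise bound on $\tp(r,x,\cdot)$ to integrate against the $z=y$ singularity. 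The sentence ``the weights rearrange into $|z|^{-\alpha-\gamma'}$'' is therefore unjustified, and the argument does not close.

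More importantly, your diagnosis in the last paragraph is off: the naive diagonal bound $p(s,z,y)\le c\,s^{-d/\alpha}\le c\,|y|^{-d}$ (valid because $s\ge g(y)$) does \emph{not} lose the factor $|y|^\gamma$ if you choose the right spatial cut. The paper splits (C) at $|z|=2|y|$, not at $|z|=2R$. On $\{|z|<2|y|\}$ one uses the diagonal bound and recovers $|y|^\gamma$ from $q(z)=\kappa|z|^{-\alpha}\le c\,|y|^{\gamma}|z|^{-\gamma-\alpha}$; on $\{|z|>2|y|\}$ one uses $p(s,z,y)\approx p_s(z)\le c|z|^{-d}$ and $|z|^{-d-\alpha}\le c\,|y|^{\gamma-d}|z|^{-\gamma-\alpha}$. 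Both pieces then feed directly into $\int_0^1\!\int\tp(1-s,x,z)|z|^{-\gamma-\alpha}\,dz\,ds\le cH(x)$, which is exactly \eqref{eq1:ESTtp1}. No AM--GM, no $|z-y|$ weight, no near/far sub-split are needed.
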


\begin{proof}
We will use Lemma \ref{lem:tp_est_prel}. Let $x,y\in\Rd\setminus\{0\}$. By \eqref{eq:1} and Lemma \ref{cor:estintHbis},
\begin{align}
\int_0^1 \int_{\RR^d} \tp(s,x,z) |z|^{-\gamma-\alpha} dz ds 
&\le \frac{1}{\kappa-\kappa_\gamma}   \int_{\RR^d} \tp(1,x,z) |z|^{-\gamma} dz ds 
 \le  \frac{c}{\kappa-\kappa_\gamma}  H(x). \label{eq1:ESTtp1}
\end{align}
 By \eqref{eq:sca},
\begin{align}\label{eq:estp}
	p_s(y) \le C s^{(\beta-d)/\alpha}|y|^{-\beta}, \qquad s>0, \quad 0 \le \beta \le d+\alpha\, ,
\end{align}
in particular, $p_s(y) \le c \left(s^{-d/\alpha} \land \frac{1}{|y|^d} \land \frac{s}{|y|^{d+\alpha}}\right)$. 
This implies 
\begin{align*}
\int^{1/2}_0\int_{2|z|\leq |y|}\tp(1-s,x,z)q(z)p_s(y)dzds &\le c\int_0^1 \int_{|z|\le |y|} \tp(1-s,x,z)|y|^\gamma|z|^{-\gamma-\alpha} |y|^{-d} dz ds \\& \le \frac{c}{\kappa-\kappa_\gamma} |y|^{\gamma-d} H(x).
\end{align*}
We  also have $\sup_{z\in\Rd}p_s(z)\leq cs^{-d/\alpha}\leq c |y|^{-d}$ for $s\geq g(y)$. Furthermore, $p(s,z,y)\approx p_s(z)\leq c|z|^{d}$ for $|z| >2|y|$. Therefore, by \eqref{eq1:ESTtp1}, 
\begin{align*}
&\int^1_{g(y)\wedge 1}\int_{2|z|>|y|} \tp(1-s,x,z)q(z)p(s,z,y)dzds \\
 &\le c|y|^{-d}\int_{g(y)}^1 \int_{|z|<2|y|} \tp(1-s,x,z) q(z) dz ds  +\,c\int_{g(y)}^1 \int_{|z|> 2|y|} \tp(1-s,x,z) q(z) \frac{1}{|z|^{d}} dz ds \\ 
&\le \, c |y|^{\gamma-d}\int_{0}^1 \int_{\RR^d} \tp(1-s,x,z) |z|^{-\gamma-\alpha} dz ds \le  c H(x)|y|^{\gamma-d}.
\end{align*}
Since $p_1(y)\leq c |y|^{\gamma-d}$, we obtain the claim by Lemma \ref{lem:tp_est_prel}.
\end{proof}

\begin{lemma}\label{lem:inttp0} 
For all $\beta \in (\delta, d-\delta)$ and $\eta \in (0, \delta)$ there is a constant $C_\eta$ such that
$$
\int_{\RR^d} \tp(t,x,z) |z|^{-\beta} dz \le C_\eta |x|^{-\beta-\eta}, \qquad 0< t \le 1 , \quad |x| \le 2(2\kappa)^{1/\alpha}.
$$
\end{lemma}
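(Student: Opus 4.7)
\emph{Overview.} My plan is to reduce the statement to $t=1$ by the scaling of $\tp$ and then to bound $F(y):=\int_{\RR^d}\tp(1,y,w)|w|^{-\beta}\,dw$ in two regimes: $F(y)\le C_\eta|y|^{-\beta-\eta}$ when $|y|\le 2(2\kappa)^{1/\alpha}$, and $F(y)\le C|y|^{-\beta}$ when $|y|>2(2\kappa)^{1/\alpha}$. By Lemma~\ref{ss th} and the substitution $w=zt^{-1/\alpha}$, $\int_{\RR^d}\tp(t,x,z)|z|^{-\beta}dz=t^{-\beta/\alpha}F(xt^{-1/\alpha})$. Feeding in the two bounds: if $|xt^{-1/\alpha}|\le 2(2\kappa)^{1/\alpha}$, then $t^{-\beta/\alpha}F(xt^{-1/\alpha})\le C_\eta t^{\eta/\alpha}|x|^{-\beta-\eta}\le C_\eta|x|^{-\beta-\eta}$ using $t\le 1$; if $|xt^{-1/\alpha}|>2(2\kappa)^{1/\alpha}$, then $t^{-\beta/\alpha}F(xt^{-1/\alpha})\le C|x|^{-\beta}\le C'|x|^{-\beta-\eta}$ since $|x|\le 2(2\kappa)^{1/\alpha}$ is bounded.

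\emph{Bound at small $|y|$.} Fix $\gamma=\delta-\eta\in(0,\delta)$ and split $F(y)$ into the regions $\{|w|\le|y|\}$, $\{|y|<|w|\le 2(2\kappa)^{1/\alpha}\}$, and $\{|w|>2(2\kappa)^{1/\alpha}\}$. On the inner ball I use the branch $\tp(1,y,w)\le C_\gamma H(w)|y|^{\gamma-d}$ from Lemma~\ref{lem:ESTtp1}; on the annulus the branch $\tp(1,y,w)\le C_\gamma H(y)|w|^{\gamma-d}$; and on the outer region Lemma~\ref{lem:ESTtp0} applies (its large argument is now $|w|$), giving $\tp(1,y,w)\le CH(y)p(1,y,w)$. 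Polar integration yields $|y|^{\gamma-d}(|y|^{d-\delta-\beta}+|y|^{d-\beta})\approx|y|^{\gamma-\delta-\beta}$ from the inner ball (convergence at $w=0$ uses $\beta<d-\delta$), $H(y)\cdot|y|^{\gamma-\beta}$ from the annulus (since $\gamma<\beta$ the lower endpoint dominates), and $H(y)\int p(1,y,w)|w|^{-\beta}\,dw\lesssim H(y)$ from Lemma~\ref{lem:h}. Using $H(y)\approx|y|^{-\delta}$ for small $|y|$ and $\gamma<\delta<\beta$, each contribution is $\lesssim|y|^{\gamma-\delta-\beta}=|y|^{-\beta-\eta}$; the same bound absorbs the region $|y|\approx 2(2\kappa)^{1/\alpha}$ where $F(y)$ is simply uniformly bounded.

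\emph{Bound at large $|y|$.} For $|y|>2(2\kappa)^{1/\alpha}$, symmetry of $\tp$ and Lemma~\ref{lem:ESTtp0} applied with $y$ in the large (second) slot give $\tp(1,y,w)=\tp(1,w,y)\le CH(w)p(1,w,y)=CH(w)p(1,y,w)$ \emph{uniformly} in $w\in\RR^d$. Hence $F(y)\le C\int p(1,y,w)\bigl(|w|^{-\delta-\beta}+|w|^{-\beta}\bigr)dw$, and since $\delta+\beta<d$ Lemma~\ref{lem:h} bounds both integrals by $c|y|^{-\beta}$ for large $|y|$, as required.

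\emph{Main obstacle.} The delicate point is absolute convergence near $w=0$ in the small-$|y|$ case: the naive branch $H(y)|w|^{\gamma-d}$ of Lemma~\ref{lem:ESTtp1} would leave the non-integrable singularity $|w|^{\gamma-d-\beta}$ at the origin (since $\gamma<\delta<\beta$ forces the inner integral to diverge). The fix is to exploit \emph{both} entries of the minimum: switching to $H(w)|y|^{\gamma-d}$ on $\{|w|\le|y|\}$ trades the $w$-singularity for a mild $y$-singularity compatible with $\beta<d-\delta$. After this swap, the margin $\delta-\gamma=\eta$ promotes the resulting scaling $|y|^{\gamma-\delta-\beta}$ into exactly $|y|^{-\beta-\eta}$.
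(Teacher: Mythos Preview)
Your argument is correct and follows essentially the paper's route: set $\gamma=\delta-\eta$, apply the two branches of Lemma~\ref{lem:ESTtp1} on $\{|w|\le|y|\}$ and $\{|y|<|w|\le R\}$ (with $R=2(2\kappa)^{1/\alpha}$), control the outer region $\{|w|>R\}$, and then scale. Your scaling step is in fact more careful than the paper's---you separately prove $F(y)\le C|y|^{-\beta}$ for $|y|>R$ via Lemma~\ref{lem:ESTtp0} so as to cover the case $|t^{-1/\alpha}x|>R$, which the paper's written proof tacitly skips; for the outer region at $t=1$ the paper instead bounds $|z|^{-\beta}\le R^{-\beta}$ and invokes Proposition~\ref{lem:int_tpinfty}, a cosmetic difference from your use of Lemma~\ref{lem:ESTtp0}.
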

\begin{proof}
Let $\gamma = \delta-\eta$, so that $\gamma \in (0, \delta)$ and $\beta - \gamma >0$. Fix $R=2(2\kappa)^{1/\alpha}$. By Lemma \ref{lem:ESTtp1},
\begin{align*}
\int_{|z| \le R} \tp(1,x,z) |z|^{-\beta} dz &\le  C\int_{|z|\le|x| \le R} |x|^{\gamma-d} |z|^{-\delta-\beta} dz
 + C\int_{|x| < |z| \le R} |x|^{-\delta}  |z|^{\gamma-\beta-d} dz\\ &\le C |x|^{\gamma-\beta-\delta} = C |x|^{-\beta -\eta}.	
\end{align*}
Proposition \ref{lem:int_tpinfty} implies
\begin{align*}
\int_{|z| > R} \tp(1,x,z) |z|^{-\beta} dz \le  R^{-\beta}\int_{\RR^d} \tp(1,x,z) dz \le c H(x) \le c|x|^{-\beta -\eta}.
\end{align*}
This resolves the case of $t=1$. By the scaling of $\tp$ and a change of variables,
\begin{align*}
\int_{\RR^d} \tp(t,x,z) |z|^{-\beta} dz &= \int_{\RR^d} t^{-\beta/\alpha} \tp(1,t^{-1/\alpha}x,w) |w|^{-\beta} dw \\
&\le C t^{-\beta/\alpha} |t^{-1/\alpha}x|^{-\beta-\eta} 
\le C |x|^{-\beta-\eta}.
\end{align*}
\end{proof}

For $\beta >0$ we denote $H_\beta(x) = |x|^{-\beta} + 1$. We notice that $H_\delta=H$. By Lemma \ref{lem:inttp0} and Proposition \ref{lem:int_tpinfty}, for all $\beta \in (0, d-\delta)$ and $\eta \in (0, \delta)$ we have
\begin{align}
	\int_{\RR^d} \tp(t,x,y) H_{\beta}(y) dy \le C_\eta H_{(\beta+\eta)\vee \delta}(x), \qquad t\le1,\;   x\in \RR^d \label{eq:inttp0}.
\end{align}
In the next lemma we improve this result.

\begin{lemma}\label{lem:inttp}
Let $0 <\beta < d-\delta$. There is a constant $C_\beta$ such that 

\begin{align*}
	\int_{\RR^d} \tp(t,x,y) H_\beta(y) dy\le C_\beta t^{-\beta/\alpha}H(t^{-1/\alpha}x), \qquad t>0,\;   x\in \RR^d 
\end{align*}
\end{lemma}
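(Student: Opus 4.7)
The plan is to use the self-similarity of $\tp$ in Lemma \ref{ss th} to reduce to the case $t = 1$. Setting $\tilde x = t^{-1/\alpha}x$ and changing variables $w = t^{-1/\alpha}y$,
$$\int_{\RR^d} \tp(t,x,y)\,H_\beta(y)\, dy = t^{-\beta/\alpha}\int_{\RR^d} \tp(1,\tilde x, w)\,|w|^{-\beta}\, dw + \int_{\RR^d} \tp(1, \tilde x, w)\, dw,$$
so it suffices to prove the unscaled estimate $\int \tp(1, x, y)\, H_\beta(y)\, dy \le C_\beta H(x)$ for $x \ne 0$. Indeed, the second summand above is already $\le M H(\tilde x)$ by Proposition \ref{lem:int_tpinfty}, and the first becomes $\le C_\beta t^{-\beta/\alpha} H(\tilde x)$, which recovers the claim in the regime $t \le 1$ where the factor $t^{-\beta/\alpha} \ge 1$ dominates.

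For the $t = 1$ estimate, split $H_\beta(y) = |y|^{-\beta} + 1$. The constant piece contributes $\int \tp(1, x, y)\, dy \le M H(x)$ via Proposition \ref{lem:int_tpinfty}. For the singular piece $\int \tp(1, x, y)\, |y|^{-\beta}\, dy$, cut the integration at $|y| = 1$. On $\{|y| > 1\}$ use $|y|^{-\beta} \le 1$ and Proposition \ref{lem:int_tpinfty} again. On $\{|y| \le 1\}$ with $0<\beta \le \delta$, the pointwise comparison $|y|^{-\beta} \le |y|^{-\delta}$ combined with the exact identity \eqref{eq:2} yields $\int \tp(1, x, y)\, |y|^{-\delta}\, dy = |x|^{-\delta} \le H(x)$, so in this regime the bound is immediate.

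The main obstacle is the range $\delta < \beta < d-\delta$ on $\{|y| \le 1\}$, where the earlier bound \eqref{eq:inttp0} yields only $C_\eta H_{(\beta+\eta)\vee\delta}(x) \approx |x|^{-\beta-\eta}$ for small $|x|$ — strictly weaker than the target $H(x) \approx |x|^{-\delta}$. To bridge this gap we expand $\tp$ via Duhamel's formula \eqref{eq:Df2} and use Lemma \ref{lem:h} to estimate the inner integral as $\int p(1-s,z,y)|y|^{-\beta}\,dy \approx (1-s)^{-\beta/\alpha} \wedge |z|^{-\beta}$. Splitting the $s$-integral at $s = 1/2$: on $(0, 1/2)$ the factor $(1-s)^{-\beta/\alpha}$ is bounded, so the contribution reduces to $\int_0^{1/2}\int \tp(s, x, z) |z|^{-\alpha}\, dz\, ds$, controlled via \eqref{eq:1} with exponent $0$ and Proposition \ref{lem:int_tpinfty}; on $(1/2, 1)$ we use the semigroup identity $\tp(s, x, z) = \int \tp(s-1/4, x, w) \tp(1/4, w, z)\,dw$, apply \eqref{eq:inttp0} to the inner $(1/4, w, z)$-integral to transfer the singularity $|z|^{-\alpha-\beta}$ into a bound in $|w|$, and then close the estimate via Corollary \ref{cor:tp_bas_est2} in the range $\beta \in (\delta, d-\alpha-\delta)$, followed by an application of the $\beta\le\delta$ case already established.

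The hardest sub-case is $\beta \in [d-\alpha-\delta, d-\delta)$, where $\kappa_\beta \le \kappa$ and Corollary \ref{cor:tp_bas_est2} is unavailable. Here I would iterate the above Duhamel reduction once more, first passing to an auxiliary exponent $\beta' \in (\delta, d-\alpha-\delta)$ to which the previous case applies, and then closing via Proposition \ref{lem:int_tpinfty}. The crucial structural input throughout is the invariance \eqref{eq:2} of $|\cdot|^{-\delta}$ under $\tp$, which is what ultimately allows the integrated bounds to descend from $|x|^{-\beta-\eta}$ down to the sharp $|x|^{-\delta}$.
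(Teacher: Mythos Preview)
Your reduction to $t=1$ and the treatment of $\beta\le\delta$ are fine, and you correctly locate the difficulty in the range $\delta<\beta<d-\delta$. The gap is in how you handle the piece $\int_{1/2}^1\int\tp(s,x,z)\,|z|^{-\alpha-\beta}\,dz\,ds$ arising from Duhamel. After your semigroup split and \eqref{eq:inttp0}, you are left with $\int_{1/4}^{3/4}\int\tp(s',x,w)\,|w|^{-\alpha-\beta-\eta}\,dw\,ds'$, and you propose to close it with Corollary~\ref{cor:tp_bas_est2}. But that corollary uses \eqref{eq:1} in the regime $\kappa_\gamma>\kappa$ and yields only $c\,|x|^{-\beta-\eta}$, which for small $|x|$ is strictly larger than the target $H(x)\approx|x|^{-\delta}$. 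Your subsequent appeal to ``the $\beta\le\delta$ case already established'' does not repair this: the exponent you are stuck with is still above $\delta$, so nothing reduces it. In effect your argument reproduces the weak bound \eqref{eq:inttp0} rather than improving on it.

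The missing idea is to exploit \eqref{eq:1} in the \emph{opposite} sign regime. For $\beta'\in[0,\delta)$ one has $\kappa-\kappa_{\beta'}>0$, so \eqref{eq:1} together with Lemma~\ref{cor:estintHbis} gives
\[
\int_0^1\!\int_{\Rd}\tp(s,x,z)\,|z|^{-\beta'-\alpha}\,dz\,ds
\;=\;\frac{1}{\kappa-\kappa_{\beta'}}\Bigl(\int_{\Rd}\tp(1,x,y)\,|y|^{-\beta'}\,dy-|x|^{-\beta'}\Bigr)\;\le\;c\,H(x),
\]
i.e.\ the space--time integral with exponent $\gamma\in(\delta,\delta+\alpha)$ is controlled by $H(x)$, not by $|x|^{-\gamma+\alpha}$; this is \eqref{eq1:inttp} in the paper. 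The paper then writes $\tp(1,x,y)=\int_0^1\!\int\tp(s,x,z)\tp(1-s,z,y)\,dz\,ds$ (Chapman--Kolmogorov averaged over $s$, not Duhamel), pushes $H_\beta(y)$ through the inner integral via \eqref{eq:inttp0} to land on $H_{\beta+\eps}(z)$ with $\beta+\eps<\delta+\alpha$, and then applies \eqref{eq1:inttp} to obtain $CH(x)$. For $\beta\ge\delta+\alpha$ the same mechanism, with Corollary~\ref{cor:tp_bas_est2} now playing the role of a one-step exponent reduction by $\nu\alpha$, is iterated finitely many times until the exponent drops below $\delta+\alpha$. Your sketch for the range $[d-\alpha-\delta,d-\delta)$ inherits the same defect, since the base step it relies on does not deliver $H(x)$.
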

\begin{proof}
By the scaling of $\tp$ it suffices to prove the result for fixed $t>0$.
If $\beta \le \delta$, then we simply apply Lemma \ref{cor:estintHbis} and Proposition \ref{lem:int_tpinfty}, so suppose that $\beta = \delta +\xi \alpha$ for some $\xi >0$. 

First, let $\xi <1$. By Theorem \ref{thm:1} and   Lemma \ref{cor:estintHbis},
\begin{align}
	\int_0^1 \int_{\RR^d} \tp(s,x,z) H_{\gamma}(z) \,dz\,ds &\le c H(x),  \qquad \text{if} \quad \delta < \gamma < \delta+\alpha. \label{eq1:inttp}
\end{align}	
Let $0 < \eps < (1-\xi)\alpha\wedge \delta$, so that $\beta + \eps < \delta+\alpha$. By \eqref{eq:inttp0} and \eqref{eq1:inttp}, 
\begin{align}
\int_{\RR^d} \tp(1,x,y) H_{\beta}(y) dy & = \int_{\RR^d}\int_{\RR^d} \tp(s,x,z)\tp(1-s,z,y) H_{\beta}(y)\, dz\,dy \label{eq1.5:inttp} \\
&=\int_0^1\int_{\RR^d}\int_{\RR^d} \tp(s,x,z)\tp(1-s,z,y) H_{\beta}(y)\, dy\,dz\,ds \notag \\
&\le C \int_0^1\int_{\RR^d} \tp(s,x,z) H_{\beta+\eps}(z) \,dz\,ds  \le C H(x), \notag
\end{align}
as needed.
Next, let $\xi \ge 1$.  By Corollary \ref{cor:tp_bas_est2},
\begin{align}
	\int_0^1\int_{\RR^d} \tp(s,x,z) H_{\gamma}(z) \,dz\,ds &\le c H_{\gamma-\alpha}(x) \quad \text{if} \quad \delta +\alpha < \gamma < d-\delta. \label{eq2:inttp}
\end{align}
Let us fix $\nu \in (0,1)$  such that $\eta := (1-\nu)\alpha < d-\beta-\delta$ and $\eta<\delta$.
By \eqref{eq:inttp0} and \eqref{eq2:inttp},  we have for $\delta+\alpha\leq \gamma\leq \beta$,
\begin{align}
\int_{\RR^d} \tp(1,x,y) H_{\gamma}(y) dy  &=\int_0^1\int_{\RR^d}\int_{\RR^d} \tp(s,x,z)\tp(1-s,z,y) H_{\gamma}(y)\, dy\,dz\,ds  \label{eq3:inttp} \\
&\le C \int_0^1\int_{\RR^d} \tp(s,x,z) H_{\gamma+\eta}(z) \,dz\,ds  \le C H_{\gamma-\nu\alpha}(x). \notag
\end{align}
We choose $n \in \NN$ so that $(n-1)\nu +1 \leq  \xi < n \nu +1$.  By \eqref{eq3:inttp} and \eqref{eq1.5:inttp},
\begin{align*}
\int_{\RR^d} \tp(n+1,x,y) H_{\beta}(y) dy & = \int_{\RR^d}\ldots\int_{\RR^d} \tp(1,x,z_1) \ldots \tp(1,z_n,y) H_{\beta}(y)\, dy\, dz_n\ldots dz_1 \\
& \le C^n\int_{\RR^d} \tp(1,x,z_1) H_{\beta-n\nu\alpha}(z_1)\, dz_1 \\
& = C^n\int_{\RR^d} \tp(1,x,z_1) H_{\delta +(\xi-n\nu)\alpha}(z_1)\, dz_1 \\
&\le C^{n+1} H(x),
\end{align*}
as needed.
\end{proof}

We next improve the estimate from  Lemma \ref{lem:ESTtp1}.
\begin{lemma}\label{lem:ESTtp2} 
Let $0<|x|, |y| \leq 2(2\kappa)^{1/\alpha}$. For each $\eta \in (0,\delta)$ there is  $C_\eta$ such that
\begin{align*}
	\tp(1,x,y) \le C_\eta \left(H(x)|y|^{-\delta-\eta} \land H(y)|x|^{\delta-\eta} \right). 
\end{align*}
\end{lemma}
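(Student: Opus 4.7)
The approach is to retrace the proof of Lemma \ref{lem:ESTtp1} through the decomposition of Lemma \ref{lem:tp_est_prel}, but to replace the integral estimate \eqref{eq1:ESTtp1} by the sharper Lemma \ref{lem:inttp}. By symmetry of $\tp$, it is enough to establish $\tp(1,x,y) \le C_\eta H(x)|y|^{-\delta-\eta}$; the other inequality in the minimum then follows on swapping $x$ and $y$. Lemma \ref{lem:tp_est_prel} expresses $\tp(1,x,y)$ as a sum of three contributions; the first, $cH(x)p_1(y)$, is immediately dominated by $CH(x)|y|^{-\delta-\eta}$ since $|y|^{-\delta-\eta}$ is bounded below on the set $|y|\le 2(2\kappa)^{1/\alpha}$.

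For the middle contribution, the plan is to use $p_s(y) \le C s^{(\rho-d)/\alpha}|y|^{-\rho}$ from \eqref{eq:estp} with some $\rho > d-\alpha$ to secure integrability in $s$, combined with the elementary inequality $|z|^{-\alpha} \le (|y|/2)^{\zeta}|z|^{-\alpha-\zeta}$ valid on $|z|\le|y|/2$. The inner space integral $\int \tp(1-s,x,z)|z|^{-\alpha-\zeta}\,dz$ is then controlled by Lemma \ref{lem:inttp} with $\beta = \alpha+\zeta$; since $s\in[0,1/2]$, the factor $(1-s)^{-\beta/\alpha}H((1-s)^{-1/\alpha}x)$ collapses to $CH(x)$. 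Balancing exponents through $\rho = \zeta+\delta+\eta$ with $\zeta$ in the nonempty interval $(d-\alpha-\delta-\eta,\,d-\alpha-\delta)$ then yields the bound $CH(x)|y|^{-\delta-\eta}$.

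The third contribution is split along $\{|y|/2<|z|\le 2|y|\} \cup \{|z|>2|y|\}$, and the time integral further split at $s=1/2$. In the annular region $|z|\approx|y|$, the inequality $1 \le (2|y|)^{\beta}|z|^{-\beta}$ converts $\int \tp(1-s,x,z)\,dz$ into $|y|^\beta \int \tp(1-s,x,z)|z|^{-\beta}\,dz$, which Lemma \ref{lem:inttp} controls for $\beta \in (0,d-\delta)$. In the region $|z|>2|y|$ one uses $p(s,z,y) \le Cs^{(\rho-d)/\alpha}|z|^{-\rho}$ with $\rho = d-\alpha-\delta-\eta$ so that Lemma \ref{lem:inttp} applies to the resulting $|z|^{-\alpha-\rho}$. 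In the early time range $s\in[g(y),1/2]$, combining these with $\int_{g(y)}^{1/2}s^{-d/\alpha}\,ds \approx |y|^{\alpha-d}$ and the choice $\beta = d-\delta-\eta$ recovers exactly $|y|^{-\delta-\eta}$. In the late time range $s\in[1/2,1]$, the change of variable $u=1-s$ and the simple bound $p(1-u,z,y)\le C$ reduce matters to estimating $|y|^{\beta'-\alpha}\int_0^{1/2}u^{-\beta'/\alpha}H(u^{-1/\alpha}x)\,du$ for some $\beta' \in (0,\alpha)$, with a case split depending on whether $\alpha \le \delta+\eta$ (any small $\beta'>0$ works) or $\alpha > \delta+\eta$ (one must take $\beta' \ge \alpha-\delta-\eta$).

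The hardest part is precisely this late-time contribution: one must balance the singular factor $(1-s)^{-\beta/\alpha}$ from Lemma \ref{lem:inttp} against the $|y|^{\beta}$ gained from spatial localization, and the constraint $\beta<\alpha$ needed to make the time integral converge near $s=1$ is what forces the case split above. The remaining steps are routine power-counting, made possible by the fact that $\eta>0$ supplies exactly the slack needed for the parameter intervals involved to be nonempty.
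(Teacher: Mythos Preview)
Your argument is correct and uses the same scaffolding as the paper: the decomposition of Lemma~\ref{lem:tp_est_prel} fed by the sharpened integral bound of Lemma~\ref{lem:inttp}. The paper's execution is leaner in two places. For the middle term the paper takes your $\zeta$ at the left endpoint $d-\alpha-\delta-\eta$, so that the $z$-integral bound $c|y|^{d-\delta-\eta-\alpha}H(x)$ is independent of $s$; then $\int_0^{1/2}p_s(y)\,ds\le\int_0^\infty p_s(y)\,ds=c|y|^{\alpha-d}$ finishes it directly, with no need for the pointwise bound \eqref{eq:estp} on $p_s$ or the $\rho$-balancing. For the late-time portion $s\in[1/2,1]$ of the third term, the paper replaces your entire case-split argument by the single estimate $p(s,z,y)\le p_{1/2}(0)$ combined with the ready-made bound~\eqref{eq:tp_q}, yielding $\le p_{1/2}(0)MH(x)$; this already gives $\le CH(x)|y|^{-\delta-\eta}$ since $|y|^{-\delta-\eta}$ is bounded below on $|y|\le 2(2\kappa)^{1/\alpha}$.
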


\begin{proof}
We will use Lemma \ref{lem:tp_est_prel}.
By Lemma \ref{lem:inttp}, for $s \in (0,1/2)$ we have
\begin{align}
\int_{|z| < 2|y|} \tp(1-s,x,z) q(z) dz  &\le \kappa   |2y|^{d-\delta-\eta-\alpha} \int_{\RR^d} \tp(1-s,x,z) |z|^{\delta+\eta-d} dz \notag\\&\le c     |y|^{d-\delta-\eta-\alpha} H(x). \label{eq1:ESTtp2}
\end{align}
Therefore,
\begin{align}\label{eq:101}
\int^{1/2}_0\int_{2|z|\leq |y|}\tp(1-s,x,z)q(z)p_s(y)dzds & \le   c H(x) |y|^{d-\eta -\delta-\alpha} \int_0^\infty p_s(y)\,ds  \\  & \le  C H(x) |y|^{-\delta-\eta}\,.\nonumber
\end{align}
By \eqref{eq:oppt} and \eqref{eq:tp_q},
\begin{equation}\label{eq:102}
\int^1_{1/2}\int_{2|z|>|y|} \tp(1-s,x,z)q(z)p(s,z,y)dzds\leq p_{1/2}(0)MH(x).
\end{equation}
Let $g(y)  < 1/2$. For $|z| > 2|y|$, by \eqref{eq:estp} we have $p(s,z,y) \le s^{-(\delta+\eta+\alpha)/\alpha}/|z|^{d-\delta-\eta-\alpha}$. Hence, by \eqref{eq1:ESTtp2} and Lemma \ref{lem:inttp}, 
\begin{align}
&\int^{1/2}_{g(y)}\int_{2|z|>|y|} \tp(1-s,x,z)q(z)p(s,z,y)dzds  \notag \\
&\le  c\int_{g(y)}^{1/2} \int_{|z|<2|y|} \tp(1-s,x,z) q(z) s^{-d/\alpha} dz ds  \label{eq:103} \\
&+ c\int_{g(y)}^{1/2} \int_{|z|> 2|y|} \tp(1-s,x,z) q(z) \frac{s^{-(\delta+\eta+\alpha)/\alpha}}{|z|^{d-\delta-\eta-\alpha}} dz ds \le C H(x) |y|^{-\delta-\eta}\,.\notag 
\end{align}
Since $p_1(y) \le p_1(0)= c$, combining \eqref{eq:101} -- \eqref{eq:103} with Lemma \ref{lem:tp_est_prel} we see that
$\tp(1,x,y) \le CH(x)|y|^{-\delta-\eta}$. The symmetry of $\tp$ ends the proof.
\end{proof}
Our integral analysis comes to a fruition.
\begin{lemma}\label{lem:ub}
There exists a constant $C$ such that
\begin{align*}
\tp(t,x,y) \le C p(t,x,y) H(t,x)H(t,y), \qquad x,y \in \RR^d \setminus\{0\}.
\end{align*}
\end{lemma}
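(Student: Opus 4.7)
The plan is to reduce to $t=1$ by self-similarity and then split into cases according to the magnitudes of $|x|$ and $|y|$. Under the scaling $\tp(t,x,y)=t^{-d/\alpha}\tp(1,t^{-1/\alpha}x,t^{-1/\alpha}y)$ from Lemma \ref{ss th}, one has $H(t,x)=H(t^{-1/\alpha}x)$ and $t^{-d/\alpha}p(1,t^{-1/\alpha}x,t^{-1/\alpha}y)=p(t,x,y)$, so it suffices to prove $\tp(1,x,y)\le Cp(1,x,y)H(x)H(y)$ for every $x,y\ne 0$. Throughout I will set $R:=2(2\kappa)^{1/\alpha}$ and $R_0:=R/2^{1/\alpha}$.

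First, when $|x|\ge R$ or $|y|\ge R$, the value of $H$ at the large-argument position is bounded by $1+R^{-\delta}$, so Lemma \ref{lem:ESTtp0} together with the symmetry $\tp(1,x,y)=\tp(1,y,x)$ immediately yields $\tp(1,x,y)\le CH(x)H(y)p(1,x,y)$. In the remaining case $|x|,|y|\le R$ one has $p(1,x,y)\approx 1$ by \eqref{eq:oppt}, so the target reduces to $\tp(1,x,y)\le CH(x)H(y)$. If moreover $|y|\ge R_0$ (say), then Lemma \ref{lem:ESTtp2} gives $\tp(1,x,y)\le C_\eta H(x)|y|^{-\delta-\eta}\le CH(x)\le CH(x)H(y)$, since $|y|^{-\delta-\eta}$ is bounded by a constant and $H(y)\ge 1$; the case $|x|\ge R_0$ is symmetric.

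The only nontrivial case is therefore $|x|,|y|<R_0$. Here I apply Chapman-Kolmogorov, $\tp(1,x,y)=\int_{\Rd}\tp(1/2,x,z)\tp(1/2,z,y)\,dz$, and split the integral at $|z|=R_0$. On $\{|z|\ge R_0\}$ the scaled Lemma \ref{lem:ESTtp0} gives $\tp(1/2,x,z)\le CH(x)p(1/2,x,z)$ and symmetrically $\tp(1/2,z,y)\le CH(y)p(1/2,z,y)$, so this part is at most $CH(x)H(y)\int p(1/2,x,z)p(1/2,z,y)\,dz=CH(x)H(y)p(1,x,y)\le CH(x)H(y)$. On $\{|z|<R_0\}$ the scaled Lemma \ref{lem:ESTtp2} yields, for any $\eta\in(0,\delta)$, both $\tp(1/2,x,z)\le C_\eta H(x)|z|^{-\delta-\eta}$ and $\tp(1/2,z,y)\le C_\eta H(y)|z|^{-\delta-\eta}$, so the corresponding contribution is bounded by $CH(x)H(y)\int_{|z|<R_0}|z|^{-2\delta-2\eta}\,dz$.

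The main obstacle is the extra $\eta$ in the exponent of Lemma \ref{lem:ESTtp2}, which the Chapman-Kolmogorov splitting converts into the integrability condition $2\delta+2\eta<d$. Since $\delta\le(d-\alpha)/2$, we have $2\delta\le d-\alpha$ and it is enough to choose $\eta\in(0,\delta\wedge\alpha/2)$, making the inner integral finite; the trivial case $\delta=0$ corresponds to $\tp=p$ and $H\equiv 2$. Combining the three regimes completes the proof at $t=1$, and the general $t$ case follows by the scaling identity recalled at the start.
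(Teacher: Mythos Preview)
Your proof is correct and follows essentially the same route as the paper: reduce to a fixed time by scaling, handle the region where one argument is large via Lemma~\ref{lem:ESTtp0}, and in the small-argument region use Chapman--Kolmogorov together with Lemma~\ref{lem:ESTtp2} and the integrability condition $2\delta+2\eta<d$. The only cosmetic difference is that the paper splits $2=1+1$ and then rescales back, whereas you split $1=\tfrac12+\tfrac12$ and introduce the intermediate radius $R_0=R/2^{1/\alpha}$ to make the scaled lemmas applicable directly; your version is slightly tidier in this respect.
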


\begin{proof}
By scaling for $\tp$ (Lemma \ref{ss th}), it suffices to consider $t=1$.
Fix $\eta \in (0,d/2-\delta)$. Let $R=2(2\kappa)^{1/\alpha}$. By \eqref{eq:ChK}, Lemma \ref{lem:ESTtp2} and \ref{lem:ESTtp0},  for $|x|,|y|\leq 2(2\kappa)^{1/\alpha}$ we have 
\begin{align*}
\tp(2,x,y) & = \int_{\RR^d} \tp(1,x,z) \tp(1,z,y)\,dz \\&\le c\int_{|z| \le R} H(x) |z|^{-\delta-\eta} |z|^{-\delta-\eta}  H(y)\,dz + c\int_{|z| > R} H(x) |z|^{-d-\alpha} |z|^{-d-\alpha}  H(y)\,dz \\& \le CH(x)H(y).
\end{align*}
Since $p(1,x,y) \ge c/(1+ |x-y|^{d+\alpha})$, we obtain
\begin{align}
	\tp(1,x,y) \le Cp(1,x,y) H(x) H(y), \qquad |x|,|y| \leq R. \label{eq:tp_est_compact}
\end{align} 
By Lemma \ref{lem:ESTtp0}, we have
\begin{align}
	\tp(1,x,y) \le Cp(1,x,y) H(x) H(y), \qquad  R \le |y|, \label{eq:tp_est_compact1}
\end{align} 
By symmetry, \eqref{eq:tp_est_compact} and \eqref{eq:tp_est_compact1},
\begin{align*}
	\tp(1,x,y) \le C p(1,x,y)H(x)H(y), \qquad x,y \in \RR^d.
\end{align*}
\end{proof}

\subsection{Lower bound of the heat kernel}
For $r>0$ we denote $B_r = B(0,r)$. Let $q(t,x,y)=\tilde p(t,x,y)/(H(x)H(y))$ and $\mu(dy)=H^2(y)dy$. Note that $q$ is an integral kernel of a semigroup on $L^2(\mathbb{R}^d,\mu)$. By \eqref{eq:2} and Corollary \ref{cor:sh}, 
\begin{align*}
1\leq \int_{\Rd} q(1,x,y)\mu(dy)\leq M+1,\quad x\neq 0.
\end{align*}
Next, by Lemma \ref{lem:ub},
$$q(1,x,y)\leq C_1p(1,x,y),\quad x,y\neq 0.$$
Therefore, by \eqref{eq:oppt}, there is $R>2$ such that 
for $x\in B_1\setminus\{0\}$,
$$\int_{B_R^c}q(1,x,y)\mu(dy)\leq \frac14.$$
Furthermore, there is $0<r<1$ such that for all $x\neq 0$,
$$\int_{B_r}q(1,x,y)\mu(dy)\leq C_1p_1(0)\mu(B_r)\leq \frac 14.$$
Hence, for $x\in B_1\setminus\{0\}$,
\begin{align}
	\int_{B_R\setminus B_r}q(1,x,y)\mu(dy)\geq \frac12. \label{eq0:tp_lb}
\end{align}
By the semigroup property and \eqref{eq0:tp_lb}, for $x\in B_1\setminus\{0\}$ and $|y|\geq r$ we have
\begin{align}
q(2,x,y) &\geq \int_{B_R\setminus B_r}q(1,x,z)q(1,z,y)\mu(dz) \geq \int_{B_R\setminus B_r}q(1,x,z)\frac{p(1,z,y)}{H^2(r)}\mu(dz) \notag \\ 
&\geq c_2 \frac{(R+|y|)^{-d-\alpha})}{H^2(r)}\int_{B_R\setminus B_r}q(1,x,z)\mu(dz)
\geq c_3 p_1(y). \label{eq1:tp_lb}
\end{align}
In a similar way we get
\begin{align}
q(3,x,y) \geq c_4 p_1(y) \ge c_5 p(3,x,y), \qquad x\in B_1\setminus\{0\}, \; |y|\geq 1\,. \label{eq2:tp_lb}
\end{align}
Now, let $x,y\in B_1\setminus\{0\}$. Again by the semigroup property and \eqref{eq1:tp_lb}, 
\begin{align}q(3,x,y)& \geq \int_{B_R\setminus B_r}q(1,x,z)q(2,z,y)\mu(dz) \geq c_3 \int_{B_R\setminus B_r}q(1,x,z)p_1(z)\mu(dz) \notag \\
&\geq c_6 p_1(y)\int_{B_R\setminus B_r}q(1,x,z)\mu(dz) \geq \frac{c_6}{2} p_1(y) \ge c_7 p(3,x,y). \label{eq3:tp_lb}
\end{align}
If $|x|,|y| >1$, then $q(3,x,y) \ge H^2(1) \tp(3,x,y) \ge H^2(1) p(3,x,y)$. By \eqref{eq2:tp_lb}, \eqref{eq3:tp_lb} and symmetry, 
$$q(3,x,y)\geq c_8 p(3,x,y),\quad x,y\neq 0.$$
Thus,
$$\tilde p(3,x,y)\geq C p(3,x,y)H(x)H(y),\quad x\neq 0.$$
By the scaling of $\tp$ we get
\begin{equation}\label{eq:lb}
\tilde p(t,x,y)\geq C p(t,x,y)H(t,x)H(t,y),\quad x\neq 0.
\end{equation}

\subsection{Continuity of the heat kernel}

\begin{lemma}\label{lem:tp_cont1}
For every $x \in  \RR^d \setminus \{0\}$, the function $\RR^d \setminus \{0\}\ni y\mapsto \tp(1,x,\cdot)$ is continuous. 
\end{lemma}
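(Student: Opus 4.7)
The plan is to prove continuity of $y \mapsto \tp(1,x,y)$ on $\Rd \setminus \{0\}$ by working from the second form of Duhamel's formula \eqref{eq:Df2},
$$
\tp(1,x,y) = p(1,x,y) + \int_0^1 \!\int_{\Rd} \tp(s,x,z)\, q(z)\, p(1-s,z,y)\, dz\, ds,
$$
in which the $y$-dependence is isolated in the factor $p(1-s,z,y)=p_{1-s}(y-z)$, which is jointly continuous for $s<1$. The Gaussian-type tail and the on-diagonal bound \eqref{eq:oppt} will provide all the control we need once paired with the upper estimate Lemma~\ref{lem:ub} and the integral identity \eqref{eq:tp_q} coming from Proposition~\ref{lem:int_tpinfty}. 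The first term in Duhamel is trivially continuous in $y$, so the task reduces to the integral.

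Fix $y_0 \in \Rd\setminus\{0\}$, let $r=|y_0|/2$, and consider only $y$ with $|y|\geq r$. Pick $\epsilon \in (0,1/2)$ and split the integral into $I_\epsilon(y)+J_\epsilon(y)$ corresponding to $s\in(0,1-\epsilon)$ and $s\in(1-\epsilon,1)$. For $I_\epsilon$, on its domain $p(1-s,z,y)\le c\epsilon^{-d/\alpha}$ uniformly in $y$ and $z$, so $c\epsilon^{-d/\alpha}\tp(s,x,z)q(z)$ is an $n$-free dominating function whose $(s,z)$-integral is bounded by $c\epsilon^{-d/\alpha}MH(x)$ thanks to \eqref{eq:tp_q}. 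Continuity of $p_{1-s}$ and dominated convergence give continuity of $I_\epsilon$ in $y$.

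For $J_\epsilon(y)$ change variables $u=1-s$; since $1-u\ge 1/2$ the bounds $p(1-u,x,z)\le c$ and $H(1-u,x)\le H(x)$, $H(1-u,z)\le H(z)$ together with Lemma~\ref{lem:ub} give $\tp(1-u,x,z)\le C(x) H(z)$. Recalling $q(z)H(z)=\kappa(|z|^{-\alpha}+|z|^{-\alpha-\delta})$ and noting that $\alpha+\delta\le (d+\alpha)/2 <d$, Lemma~\ref{lem:h}\eqref{eq1:h} yields for every $u>0$ and $|y|\ge r$ the uniform bound
$$
\int_{\Rd}\bigl(|z|^{-\alpha}+|z|^{-\alpha-\delta}\bigr)p(u,z,y)\, dz\le c\bigl(|y|^{-\alpha}+|y|^{-\alpha-\delta}\bigr)\le C(r).
$$
Integrating in $u\in(0,\epsilon)$ gives $J_\epsilon(y)\le C(x,r)\,\epsilon$, uniformly for $|y|\ge r$.

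Given $\eta>0$, choose $\epsilon$ with $C(x,r)\epsilon<\eta/4$, so that $|J_\epsilon(y_n)|+|J_\epsilon(y_0)|<\eta/2$ as soon as $|y_n|\ge r$; then use continuity of $p(1,x,\cdot)$ and of $I_\epsilon$ to make the remaining error less than $\eta/2$ for $n$ large. This yields $\tp(1,x,y_n)\to \tp(1,x,y_0)$, proving the lemma. The only real obstacle is controlling the near-diagonal singularity of $p(1-s,z,y)$ as $s\uparrow 1$, and this is precisely what the factor $H(z)$ in Lemma~\ref{lem:ub} makes possible via Lemma~\ref{lem:h}.
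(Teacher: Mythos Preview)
Your proof is correct and follows essentially the same strategy as the paper's: split the Duhamel integral according to whether the time argument of $p$ is bounded away from $0$, use dominated convergence on the regular part, and show the singular part is $O(\epsilon)$ via Lemma~\ref{lem:ub} and Lemma~\ref{lem:h}. The one noteworthy difference is that on the singular piece the paper invokes the 3G inequality $p(1-s,x,z)p(s,z,y)\le c\,p(1,x,y)[p(1-s,x,z)+p(s,z,y)]$ to retain the factor $p(1,x,y)$, whereas you simply use $p(1-u,x,z)\le c$ for $1-u\ge 1/2$; your route is more elementary and entirely sufficient here, while the paper's sharper bound with the $p(1,x,y)$ factor is what gets recycled in the next lemma on joint continuity.
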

\begin{proof}
Fix $x,y \not=0$ and let $w \to y$. Then,
\begin{align*}
\tp(1,x,y) - \tp(1,x,w) &= p(1,x,y) - p(1,x,w) \\
&+ \int_0^1 \int_{\RR^d} \tp(1-s,x,z) q(z) [p(s,z,y) - p(s, z,w)]\,dz\,ds. 
\end{align*}
The  integral converges to 0. Indeed,  let $\eps >0$ be small. 
By Bogdan and Jakubowski \cite[Theorem 4]{MR2283957},
$$ p(1-s,x,z) p(s,z,y)\leq c p(1,x,y)[p(1-s,x,z) + p(s,z,y)],\quad 0<s<1,\, x,y,z\in\Rd.$$
Therefore,
\begin{align}
&\int_0^\eps \int_{\RR^d} \tp(1-s,x,z) q(z) p(s,z,y)\,dz\,ds  \label{eq:int_eps}\\
&\le c \int_0^\eps \int_{\RR^d} p(1-s,x,z) H(x)H(z) q(z) p(s,z,y)\,dz\,ds \notag\\
& \le    c_1 p(1,x,y) H(x) \int_0^\eps \int_{\RR^d} [p(1-s,x,z) + p(s,z,y)] H(z)q(z) \,dz\,ds \notag\\
&\le c_2 \eps H(x)[q(x)H(x)+q(y)H(y)] \,. \notag
\end{align}
For $s \in (\eps,1)$ we have $p(s,z,w) \approx p(s,z,y)$. By the dominated convergence theorem, 
\begin{align*}
\int_\eps^1 \int_{\RR^d} \tp(1-s,x,z) q(z) [p(s,z,y) - p(s, z,w)]\,dz\,ds \to 0 \quad \mbox{ as } \quad w \to y.
\end{align*}
This completes the proof.
\end{proof}

\begin{lemma}\label{lem:c}
The function $\tp(t,x,y)$ is jointly continuous in $t >0$ and $x,y \in \RR^d \setminus \{0\}$. 
\end{lemma}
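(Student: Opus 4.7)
The plan is to extend Lemma~\ref{lem:tp_cont1} to joint continuity via the Duhamel formula \eqref{eq:Df} and a dominated convergence argument, using Lemma~\ref{lem:ub} to produce a uniform integrable majorant. Fix $(t_0,x_0,y_0)\in(0,\infty)\times(\RR^d\setminus\{0\})^2$ and a sequence $(t_n,x_n,y_n)\to(t_0,x_0,y_0)$. Since $p$ is jointly continuous, the task reduces to showing
$$
I(t_n,x_n,y_n) \longrightarrow I(t_0,x_0,y_0), \qquad I(t,x,y):=\int_0^t\!\int_{\RR^d}\tp(s,x,z)\,q(z)\,p(t-s,z,y)\,dz\,ds.
$$

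First I would combine Lemma~\ref{lem:tp_cont1} with the symmetry of $\tp$ and the self-similarity \rf{ss form} to conclude that, for every fixed $s>0$ and $z\neq 0$, the map $x\mapsto\tp(s,x,z)$ is continuous on $\RR^d\setminus\{0\}$. Together with the joint continuity of $p$, this delivers pointwise convergence
$$
\mathbf{1}_{s<t_n}\,\tp(s,x_n,z)\,q(z)\,p(t_n-s,z,y_n) \longrightarrow \mathbf{1}_{s<t_0}\,\tp(s,x_0,z)\,q(z)\,p(t_0-s,z,y_0)
$$
for almost every $(s,z)\in(0,\infty)\times\RR^d$.

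Next, I would fix a compact neighborhood $K$ of $(t_0,x_0,y_0)$ on which $|x|,|y|\ge\eta>0$ and $T_1\le t\le T_2$ with $T_1>0$. Majorizing $\tp(s,x,z)$ via Lemma~\ref{lem:ub} and invoking the cross-estimate $p(s,x,z)p(t-s,z,y)\le c\,p(t,x,y)\bigl[p(s,x,z)+p(t-s,z,y)\bigr]$ from \cite[Theorem~4]{MR2283957}, as already exploited in \eqref{eq:int_eps}, I would obtain on $K$
$$
\tp(s,x,z)\,q(z)\,p(t-s,z,y) \le C\,p(t,x,y)\bigl[p(s,x,z)+p(t-s,z,y)\bigr]\bigl(|z|^{-\alpha}+|z|^{-\alpha-\delta}\bigr).
$$
The space-time integral of the right-hand side is bounded uniformly on $K$ by Lemma~\ref{lem:h}, because $\alpha+\delta<d$ and $|x|,|y|$ are bounded away from $0$. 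Dominated convergence then yields $I(t_n,x_n,y_n)\to I(t_0,x_0,y_0)$, and \eqref{eq:Df} completes the argument.

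The main obstacle, mirroring the proof of Lemma~\ref{lem:tp_cont1}, is the concentration of the heat kernel factors as $s\to 0$ or $s\to t$; the 3G-type inequality from \cite{MR2283957} is precisely the device that trades this concentration for the integrability of $|z|^{-\alpha}+|z|^{-\alpha-\delta}$ against $p$, which is secured by Lemma~\ref{lem:h} and the constraint $\delta\le(d-\alpha)/2$. A secondary, purely notational point is the separate-variable continuity of $\tp(s,\cdot,\cdot)$ needed to pass the limit inside the integrand; this is where the combined use of symmetry and self-similarity becomes essential, since Lemma~\ref{lem:tp_cont1} gives the result only for the second variable at $s=1$.
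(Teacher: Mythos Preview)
Your plan is essentially the paper's own argument: Duhamel, the upper bound Lemma~\ref{lem:ub}, the 3G inequality of \cite{MR2283957}, and Lemma~\ref{lem:tp_cont1} (transported by symmetry and scaling) for pointwise convergence of the integrand. One point needs care, however. The majorant
\[
C\,p(t,x,y)\bigl[p(s,x,z)+p(t-s,z,y)\bigr]\bigl(|z|^{-\alpha}+|z|^{-\alpha-\delta}\bigr)
\]
still depends on $(t_n,x_n,y_n)$ through $p(s,x_n,z)$ and $p(t_n-s,z,y_n)$, and there is no single integrable function of $(s,z)$ dominating it for all $n$ (for $s$ small, $p(s,x_n,\cdot)$ concentrates at $x_n$, not at $x_0$). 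So a direct application of dominated convergence is not available; what your estimate actually delivers is that the integrals over $s\in(0,\eps)$ and $s\in(t-\eps,t)$ are $O(\eps)$ \emph{uniformly} on $K$ (via Lemma~\ref{lem:h}), while on $[\eps,t-\eps]$ both $s$ and $t-s$ are bounded below and one can produce a fixed integrable majorant, so DCT applies there. You clearly anticipate this in your final paragraph, but the splitting should be made explicit.

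This $\eps$-splitting is exactly how the paper proceeds, after first reducing to $t=1$ by scaling. A small difference: in the middle region the paper also invokes the lower bound \eqref{eq:lb} to obtain $\tp(1-s,\tilde{x},z)\approx\tp(1-s,x,z)$ and then dominates by the fixed integrable function $c\,\tp(1-s,x,z)q(z)p(s,z,y)$; your route, which bounds $\tp$ from above by $p\cdot H\cdot H$ and then uses boundedness of $p$ on the middle strip, avoids the lower bound altogether.
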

\begin{proof}
By scaling, it suffices to show the continuity of  $\tp(1,\cdot,\cdot)$. Fix $x,y \in \RR^d \setminus \{0\}$ and let $\tilde{x} \to x$ and $\tilde{y} \to y$.
As in Lemma \ref{lem:tp_cont1}, we only need to show that
\begin{align*}
\int_0^1 \int_{\RR^d} |\tp(1-s,\tilde{x},z) p(s,z,\tilde{y}) - \tp(1-s, x,z)p(s,z,y)| q(z)\,dz\,ds \rightarrow 0.
\end{align*}
In addition to \eqref{eq:int_eps} we get
\begin{align}
&\int_{1-\eps}^1 \int_{\RR^d} \tp(1-s,x,z) q(z) p(s,z,y)\,dz\,ds \label{eq:int_1-eps}\\
&\le c \int_0^\eps \int_{\RR^d} p(s,x,z) H(s,x)H(s,z) q(z) p(1-s,z,y)\,dz\,ds \notag\\
& \le    c_1 p(1,x,y) \int_0^\eps \int_{\RR^d} [p(s,x,z) + p(1-s,z,y)] H(x)H(z)q(z) \,dz\,ds \notag\\
&\le c_2 \eps H(x)[q(x)H(x)+q(y)H(y)] \,. \notag
\end{align}
By \eqref{eq:int_eps} and \eqref{eq:int_1-eps}, 
\begin{align*}
&\int_0^1 \int_{\RR^d} |\tp(1-s,\tilde{x},z) p(s,z,\tilde{y}) - \tp(1-s, x,z)p(s,z,y)| q(z)\,dz\,ds \\
& \le \int_0^\eps\int_{\RR^d} [\tp(1-s,\tilde{x},z) p(s,z,\tilde{y}) + \tp(1-s, x,z)p(s,z,y)] q(z)\,dz\,ds \\
& + \int_{1-\eps}^1 \int_{\RR^d} [\tp(1-s,\tilde{x},z) p(s,z,\tilde{y}) + \tp(1-s, x,z)p(s,z,y)] q(z)\,dz\,ds \\
& + \int_\eps^{1-\eps} \int_{\RR^d} |\tp(1-s,\tilde{x},z) p(s,z,\tilde{y}) - \tp(1-s, x,z)p(s,z,y)| q(z)\,dz\,ds \\
& \le \eps c(x,y) + \int_\eps^{1-\eps} \int_{\RR^d} |p(1-s,\tilde{x},z) p(s,z,\tilde{y}) - p(1-s, x,z)p(s,z,y)| q(z)\,dz\,ds,
\end{align*}
where $c(x,y)$ is a constant.
By Lemma \ref{lem:ub} and \eqref{eq:lb}, for $\eps < s < 1-\eps$ we have $\tp(1-s,\tilde{x},z) \approx \tp(1-s,x,z)$ and $p(s,z,\tilde{y}) \approx p(s,z,y)$. By Lemma \ref{lem:tp_cont1} and the dominated convergence theorem, the last integral  is arbitrarily small. This completes the proof.
\end{proof}

\subsection{Proof of Theorem \ref{thm:main}}
We combine Lemma~ \ref{lem:ub},
inequality \eqref{eq:lb} and Lemma~\ref{lem:c}.

\subsection{Blowup} 
\begin{corollary}
If $\kappa > \kappa^*$ in \eqref{eq:SchrOp}, then $\tp(t,x,y) \equiv \infty$.
\end{corollary}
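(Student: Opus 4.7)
The plan is to exhibit, inside the non-negative defining series $\tp = \tp^{(\kappa)} = \sum_{n\ge 0} p_n$, a single contribution that is already $+\infty$ by comparing with the critical kernel $\tp^* := \tp^{(\kappa^*)}$. Set $\varepsilon = \kappa - \kappa^* > 0$. The key observation is the $[0,\infty]$-valued identity
\[
\tp(t,x,y) \;=\; \tp^*(t,x,y) \;+\; \varepsilon \int_0^t\!\int_{\Rd} \tp^*(s,x,z)\,|z|^{-\alpha}\,\tp(t-s,z,y)\,dz\,ds,
\]
which, combined with the monotonicity $\tp \ge \tp^*$ of the perturbation series in the coupling, yields
\[
\tp(t,x,y) \;\ge\; \varepsilon \int_0^t\!\int_{\Rd} \tp^*(s,x,z)\,|z|^{-\alpha}\,\tp^*(t-s,z,y)\,dz\,ds.
\]
This identity can be obtained either by subtracting the Duhamel equations \eqref{eq:Df} for $\tp$ and for $\tp^*$ and iterating the resulting equation for $\tp - \tp^*$, or equivalently by regrouping $\tp = \sum_{n\ge 0} \kappa^n R_n$ (where $R_n \ge 0$ is the $n$-fold iterated convolution of $p$ against $|z|^{-\alpha}$) via the binomial identity $\kappa^n = (\kappa^* + \varepsilon)^n$ and Tonelli, which naturally collects copies of $\tp^*$ between the $\varepsilon$-insertions.

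Next I would show the right-hand side is $+\infty$ by a pointwise lower bound near the origin. By Theorem~\ref{thm:main} applied at critical coupling, i.e.\ with $\delta = \delta^* = (d-\alpha)/2$, for fixed $s \in (0,t)$ and $x \neq 0$, and for $|z| \le (s^{1/\alpha} \wedge |x|)/2$,
\[
\tp^*(s,x,z) \;\ge\; c_1(s,x)\,|z|^{-\delta^*},
\]
and symmetrically $\tp^*(t-s,z,y) \ge c_1(t-s,y)\,|z|^{-\delta^*}$ for $|z| \le ((t-s)^{1/\alpha} \wedge |y|)/2$. The decisive numerical identity is $2\delta^* + \alpha = d$, so near $z = 0$ the integrand is bounded below by
\[
\tp^*(s,x,z)\,|z|^{-\alpha}\,\tp^*(t-s,z,y) \;\ge\; c_2(s,x,t-s,y)\,|z|^{-d},
\]
which is not locally integrable at the origin. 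Thus the inner $z$-integral is $+\infty$ for every $s \in (0,t)$, so the full double integral is $+\infty$, and therefore $\tp(t,x,y) = +\infty$ for every $t > 0$ and $x,y \in \Rd \setminus \{0\}$.

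The main obstacle is making the first step rigorous without presupposing finiteness of $\tp$. This is precisely why non-negativity of every $p_n$ (hence of every $R_n$) is essential: Tonelli sanctions the arbitrary rearrangement that produces the critical kernel as a resummed weight between the $\varepsilon$-insertions, even when the resulting series takes the value $+\infty$. Once that is in hand, the second step is a direct quantitative consequence of Theorem~\ref{thm:main}, with the critical identity $2\delta^* + \alpha = d$ — the very identity that characterises $\kappa^*$ — accounting for the borderline singularity $|z|^{-d}$. As a structural sanity check, at $\kappa = \kappa^*$ one has $\varepsilon = 0$ and the same manipulation produces no non-trivial lower bound, consistently with the finiteness of $\tp^*$ established earlier in the paper.
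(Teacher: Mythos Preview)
Your proposal is correct and follows essentially the same route as the paper: both view $\tp$ as the Schr\"odinger perturbation of $\tp^*$ by $(\kappa-\kappa^*)|z|^{-\alpha}$, use $\tp\ge\tp^*$ in the resulting Duhamel identity, and then invoke the lower bound from Theorem~\ref{thm:main} for $\tp^*$ to produce the non-integrable singularity $|z|^{-2\delta^*-\alpha}=|z|^{-d}$ near the origin. Your write-up is more explicit than the paper's (which cites \cite[Lemma~8]{MR2457489} for the Duhamel identity and simply writes ``$=\infty$'' for the integral), but the argument is the same.
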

\begin{proof}
Clearly, $\tp \ge \tp^*$. By Theorem \ref{thm:main},
\begin{align}\label{eq:psar}
\tp^*(t,x,y) \ge c p(t,x,y) t^{d-\alpha}|x|^{-(d-\alpha)/2}|y|^{-(d-\alpha)/2}\,.
\end{align}
The kernel $\tp$ may be considered as a perturbation of $\tp^*$ by $(\kappa-\kappa^*)|x|^{-\alpha}$ \cite[Lemma~8]{MR2457489}. By Duhamel's formula and \eqref{eq:psar},
\begin{align*}
\tp(t,x,y) &= \tp^*(t,x,y) + (\kappa-\kappa^*)\int_0^t \int_{\Rd} \tp(t-s,x,z) |z|^{-\alpha} \tp^*(s,x,z)\,dz\,ds \\
&\ge (\kappa-\kappa^*)\int_0^t \int_{\Rd} \tp^*(t-s,x,z) |z|^{-\alpha} \tp^*(s,x,z)\,dz\,ds = \infty.
\end{align*}
\end{proof}

\section{Quadratic forms}\label{sec:P}

In this section we give a different proof of the upper bound for $\tp$ in Theorem \ref{thm:main}.
To this end we employ Davies' method \cite{MR882426} with the usual setting of quadratic forms and Sobolev inequalities, as extended to nonlocal operators by Carlen, Kusuoka and Stroock \cite{MR898496}, see also Grigor'yan \cite[Section 5 and 10]{MR2218016} and Murat and Saloff-Coste \cite[Introduction]{MR3601569} for recent perspectives. 
Recall that Davies' method consists in Doob-type conditioning by a suitable nonnegative function and deriving the on-diagonal estimates for the conditioned semigroup from appropriate Sobolev inequalities. The argumnt depends on a careful analysis of the domain of the quadratic form of $\tp$.

We first define the quadratic form ${\mathcal E}$ of $\Delta^{\alpha/2}$, cf. \eqref{eq:Hi}, in the usual way \cite{MR2778606}: 
$$
{\mathcal E}
[f]=\lim_{t\to 0+}\frac{1}{t}(f-P_t f,f)
=\frac12 \int_\Rd \int_\Rd [f(x)-f(y)]^2\nu(y-x)dxdy,\qquad f\in L^2(\Rd).$$
We let $$\mathcal{D}(\mathcal{E})=\{f\in L^2(\Rd):\mathcal{E}[f]<\infty\}.$$
Similarly, although less explicitly we define 
$$
\tilde {\mathcal E}
[f]=\lim_{t\to 0+}\frac{1}{t}(f-\tilde P_t f,f),
\qquad f\in L^2(\Rd),$$
and $$\mathcal{D}(\tilde{\mathcal{E}})=\{f\in L^2(\Rd):\tilde{\mathcal{E}}[f]<\infty\}.$$
This form is pivotal. To handle it we 
introduce an auxiliary quadratic expression:
\begin{equation}\label{eq:Ezk}
\overline{\mathcal{E}}[f]=\frac12\int_{\R^d}\!\int_{\R^d} 
\left[\frac{f(x)}{h(x)}-\frac{f(y)}{h(y)}\right]^2
h(x)h(y) \nu(y-x)
\,dy\,dx,\quad f\in L^2(\Rd),
\end{equation}
and we let $\mathcal{D}(\overline{\mathcal{E}})=\{f\in L^2(\Rd):\overline{\mathcal{E}}[f]<\infty\}$.

\begin{lemma}\label{lem:form_domain}
We have $\mathcal{D}(\mathcal{E})\subset \mathcal{D}(\tilde{\mathcal{E}})\subset \mathcal{D}(\overline{\mathcal{E}})$ and 
$$\tilde{\mathcal{E}}[f]=\overline{\mathcal{E}}[f]=\mathcal{E}[f]-\int_{\Rd}f^2(x)q(x)dx,\quad f\in \mathcal{D}(\mathcal{E}).$$
\end{lemma}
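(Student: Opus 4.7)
The plan is threefold: (i) derive the algebraic identity $\overline{\mathcal{E}}[f]=\mathcal{E}[f]-\int f^2 q\,dx$ on $\mathcal{D}(\mathcal{E})$ by a direct manipulation of the integrand; (ii) realize $\tilde{\mathcal{E}}$ as the form of an $h$-transformed Markov semigroup, and apply Fatou's lemma to obtain $\tilde{\mathcal{E}}\ge\overline{\mathcal{E}}$ on all of $L^2$; (iii) complement this with a matching upper bound $\tilde{\mathcal{E}}[f]\le \mathcal{E}[f]-\int f^2 q\,dx$ for $f\in\mathcal{D}(\mathcal{E})$ coming from Duhamel's formula.

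For (i), I would start from the pointwise identity
$$\left[\frac{f(x)}{h(x)}-\frac{f(y)}{h(y)}\right]^2 h(x)h(y)=[f(x)-f(y)]^2+f(x)^2\,\frac{h(y)-h(x)}{h(x)}+f(y)^2\,\frac{h(x)-h(y)}{h(y)},$$
integrate against $\tfrac12\nu(y-x)\,dx\,dy$, and collapse the two cross terms by Fubini and the symmetry of $\nu$ into $\int f(x)^2 h(x)^{-1}\int[h(y)-h(x)]\nu(y-x)\,dy\,dx$. The inner integral is $\Delta^{\alpha/2}h(x)$, which equals $-q(x)h(x)$ for $x\ne 0$: this is exactly \eqref{eq:toz} with $\beta=\delta$, obtained by the integration-by-parts in $t$ used in Lemma \ref{lem:p_id} (which is legitimate since $f(0)=0$). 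All Fubini exchanges and the finiteness of the three pieces are justified by the Hardy inequality \eqref{eq:Hi}, giving $\int f^2 q\le(\kappa/\kappa^*)\mathcal{E}[f]<\infty$ for $f\in\mathcal{D}(\mathcal{E})$.

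For (ii), Theorem \ref{thm:1} tells us that $h$ is $\tilde p$-invariant, so $Q_t g(x):=\tilde P_t(gh)(x)/h(x)$ is a symmetric Markov semigroup on $L^2(\mu)$ with $\mu=h^2\,dx$. The Markov form identity for $Q_t$, pulled back by $f=gh$, yields
$$(f-\tilde P_t f,f)_{L^2}=\tfrac12\iint\left[\tfrac{f(x)}{h(x)}-\tfrac{f(y)}{h(y)}\right]^2 \tilde p(t,x,y)h(x)h(y)\,dx\,dy.$$
Since $\tilde p\ge p$ and $\lim_{t\to 0}p(t,x,y)/t=\nu(y-x)$ for $x\ne y$, we get $\liminf_{t\to 0^+}\tilde p(t,x,y)/t\ge\nu(y-x)$ almost everywhere; the spectral theorem applied to the symmetric contraction semigroup $\tilde P_t$ (Proposition \ref{prop:SCP}) further ensures that $(f-\tilde P_t f,f)/t$ is non-increasing in $t$, so $\tilde{\mathcal{E}}[f]$ exists as a (possibly infinite) monotone limit. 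Dividing the displayed identity by $t$ and applying Fatou's lemma gives $\tilde{\mathcal{E}}[f]\ge\overline{\mathcal{E}}[f]$ for every $f\in L^2$, hence $\mathcal{D}(\tilde{\mathcal{E}})\subset\mathcal{D}(\overline{\mathcal{E}})$.

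For (iii), Duhamel's formula \eqref{eq:Df} and symmetry of $P_s$ give, for $f\in\mathcal{D}(\mathcal{E})$,
$$\tfrac1t(f-\tilde P_t f,f)=\tfrac1t(f-P_t f,f)-\tfrac1t\int_0^t (q\tilde P_{t-s}f,P_s f)\,ds.$$
The first term tends to $\mathcal{E}[f]$. I would verify the convergence of the second one first for $f\in C_c^\infty(\Rd\setminus\{0\})$, where $P_s f$ and $\tilde P_s f$ converge to $f$ uniformly on compact subsets of $\Rd\setminus\{0\}$ while staying bounded by a $q$-integrable majorant, so dominated convergence gives $(q\tilde P_{t-s}f,P_s f)\to(qf,f)=\int f^2 q$ and the time average has the same limit; the general case $f\in\mathcal{D}(\mathcal{E})$ follows by density in the form norm $\sqrt{\mathcal{E}[\cdot]+\|\cdot\|_2^2}$, using $|(qu,v)|\le(\kappa/\kappa^*)\mathcal{E}[u]^{1/2}\mathcal{E}[v]^{1/2}$ (Hardy plus Cauchy--Schwarz) and $\mathcal{E}[P_s f]\le\mathcal{E}[f]$. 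Combined with (i) and (ii), this proves the equalities on $\mathcal{D}(\mathcal{E})$ and the remaining inclusion $\mathcal{D}(\mathcal{E})\subset\mathcal{D}(\tilde{\mathcal{E}})$. The main obstacle is the critical case $\kappa=\kappa^*$, where the relative form bound of $q$ with respect to $\mathcal{E}$ is exactly $1$ and $\tilde P_s$ need not form-contract $\mathcal{E}$; a natural remedy is to establish the identity first for truncated potentials $q_n=q\wedge n$ (where the subcritical argument applies) and then monotonically pass $n\to\infty$ in the perturbation series \eqref{def_p_tilde}, invoking the Beppo Levi theorem on the forms together with the scale-invariant Hardy bound.
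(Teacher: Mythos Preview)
Your plan is essentially correct and close to the paper's own proof, but there are two places where the paper's execution is tighter than your sketch.

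In step (ii) you do exactly what the paper does: use the invariance $\tilde P_t h=h$ (Theorem~\ref{thm:1}) to write $\tilde{\mathcal E}_t[f]$ in the Doob-conditioned form, minorize $\tilde p\ge p$, use $t^{-1}p(t,x,y)\to\nu(y-x)$, and apply Fatou. No difference there.

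In step (i) your pointwise identity is the ground-state representation; the paper simply quotes it from \cite[Proposition~5]{MR3460023}. Your direct computation is fine, but the Fubini you invoke does not follow from the Hardy inequality alone: the cross term $\int\!\!\int f(x)^2\,|h(y)-h(x)|\,h(x)^{-1}\nu(y-x)\,dy\,dx$ is not a priori finite, and the inner integral is only a principal value. The careful justification (splitting near and far, symmetrizing before taking absolute values) is precisely the content of \cite{MR3460023}; your ``legitimate since $f(0)=0$'' is not the right reason, and in any case $f\in\mathcal D(\mathcal E)$ need not vanish at $0$.

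In step (iii) your initial density argument does have the gap you flag: in the critical case one cannot control $\tilde P_s f$ in the $\mathcal E$-form norm, so the Cauchy--Schwarz bound $|(qu,v)|\le(\kappa/\kappa^*)\,\mathcal E[u]^{1/2}\mathcal E[v]^{1/2}$ is unusable for $u=\tilde P_sf$. Your remedy (truncate $q$) is exactly what the paper does, but the paper's implementation avoids the density/approximation step altogether. It computes $\tilde{\mathcal E}^\varepsilon[f]=\mathcal E[f]-\int f^2 q^\varepsilon$ directly from the Dyson series (the bound $p_n^\varepsilon\le p\cdot (t/\varepsilon)^n/n!$ makes all terms beyond $n=1$ contribute $O(t^2)$), and then --- this is the point you are missing --- it first restricts to \emph{nonnegative} $f$, where the kernel monotonicity $\tilde p\ge\tilde p^\varepsilon$ immediately gives $\tilde{\mathcal E}_t[f]\le\tilde{\mathcal E}^\varepsilon_t[f]$ for every $t$, hence $\tilde{\mathcal E}[f]\le\mathcal E[f]-\int f^2 q^\varepsilon$; letting $\varepsilon\to0$ and combining with (i)--(ii) yields the identity for $f\ge0$. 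Only then does the paper pass to signed $f\in\mathcal D(\mathcal E)$ by polarization (using $f_\pm\in\mathcal D(\mathcal E)$). Your ``Beppo Levi on the forms'' is vague about how to pass to signed $f$, and the nonnegativity-then-polarization trick is the clean way to do it.
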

\begin{proof}
 As usual, we denote $(f,g) = \int_{\Rd} f(x) g(x) d x$ for $f,g \in L^2(\Rd)$. 
Recall that $\{P_t\}_{t\geq 0}$ is a strongly continuous semigroup in $L^2(\Rd)$.
  Also, each $P_t$ is symmetric on $L^2(\Rd)$:
\begin{equation*}
   (P_t f,g) = (f,P_tg),
\end{equation*}
where the application of the Fubini's theorem is justified because
\begin{equation*}
(P_t |f|, |g|) \leq \|f\|_{L^2}\|g\|_{L^2}<\infty. 
\end{equation*}
For $f\in L^2(\Rd)$ we let
\begin{equation*}
  \mathcal{E}_t [f] = \tfrac{1}{t} (f-P_t f, f), \quad t>0, 
\end{equation*}
and 
\begin{equation*}
  \mathcal{E} [f] = \sup_t \mathcal{E}_t [f].
\end{equation*}
We have 
\begin{align*}
  \mathcal{E}_t [f] &= \tfrac{1}{t} (f,f) -\tfrac{1}{t}(P_t f,f)  = \tfrac{1}{t}(f,f) -\tfrac{1}{t}(P_{t/2}f, P_{t/2}f)\\
  &= \tfrac{1}{t}\left[ (f,f) - (P_{t/2}f, P_{t/2}f )\right] \geq 0,
\end{align*}
because $P_{t/2}$ is a contraction. By the spectral theorem, 
$(0,\infty)\ni t\mapsto \mathcal{E}_t[f]$ is non-increasing function, hence $\mathcal{E}_t[f] \to \mathcal{E}[f]$ as $t\to 0$ \cite[Lemma~1.3.4]{MR2778606}.
By Proposition \ref{prop:SCP}, $\tilde P_t$ is also a contraction. Let $f\in L^2(\Rd)$
and $\tilde {\mathcal{E}}[f]=\sup_t  \tilde {\mathcal{E}} _t [f]$, where $t>0$ and
\begin{equation*}
  \tilde {\mathcal{E}} _t [f] = \tfrac{1}{t} (f -\tilde P_t f,f) =  \tfrac{1}{t}\left[ (f,f) - (\tilde P_{t/2}f, \tilde P_{t/2}f )\right].
\end{equation*}

By Theorem \ref{thm:1} we have $\tilde P_t h= h$. For $f\in L^2(\Rd)$ we get
\begin{align*}
f(x) - \tilde{P_t}f(x) 
&=\int_{\RR^d}\tilde{p}(t,x,y)\left(\frac{f(x)}{h(x)}-\frac{f(y)}{h(y)}\right)h(y)dy,
\end{align*}
hence,
\begin{align*}
(f- \tilde{P}_t f,f) =\int_{\RR^d}\int_{\RR^d}\tilde{p}(t,x,y)\left(\frac{f(x)}{h(x)}-\frac{f(y)}{h(y)}\right)\frac{f(x)}{h(x)}h(x)h(y)dxdy.
\end{align*}
By the symmetry of $\tp$ we also have
\begin{align*}
(f- \tilde{P}_tf,f) = \int_{\RR^d}\int_{\RR^d}\tilde{p}(t,x,y)\left(\frac{f(y)}{h(y)}-\frac{f(x)}{h(x)}\right)\frac{f(y)}{h(y)}h(x)h(y)dxdy.
\end{align*}
Therefore,
\begin{align*}
  \tilde{\mathcal{E}}_t[f] &= \tfrac{1}{t}(f-\tilde P_t f,f) \\
  & = \tfrac{1}{2t} \int_{\bbfR^d}\int_{\bbfR^d} \tilde p(t,x, y) \left( \frac{f(x)}{h(x)} - \frac{f(y)}{h(y)}\right)^2 h(x) h(y) d x d y \\
  &\geq \tfrac{1}{2} \int_{\bbfR^d}\int_{\bbfR^d}\frac1t p(t,x,y) \left(\frac{f(x)}{h(x)}-\frac{f(y)}{h(y)}\right)^2 h(x)h(y) d x d y.
\end{align*}
We note that $\frac1t p(t,x,y)\to \nu(y-x)$ as $t\to 0$, which follows, e.g., from  \cite[Theorem 2.1]{MR0119247} and scaling.
By Fatou's lemma, 
\begin{equation*}\tilde{\mathcal{E}}[f]\geq \overline{\mathcal{E}}[f],\quad f\in L^2(\Rd).
\end{equation*}
Therefore $ \mathcal{D}(\tilde{\mathcal{E}})\subset \mathcal{D}(\overline{\mathcal{E}})$. By   \cite[Proposition~5]{MR3460023}\footnote{The trivial factor $\tfrac{1}{2}$ is missing in \cite[Proposition~5]{MR3460023}.},
\begin{align}\label{eq:1.5}
	\tEe[f] + \int_{\Rd} f^2(x) q(x)\,dx \ge \overline{\mathcal{E}}[f]+\int_{\Rd} f^2(z) q(z) d z =  \mathcal{E} [f],\quad f\in L^2(\Rd).
\end{align}
We next consider an approximation of $\tilde{\mathcal E}$. Let $\ve >0$ and $0<q^{\ve } := q \wedge \tfrac{1}{\ve}\le 1/\ve$. Let $\tilde p^{\ve}$  be the Schr\"odinger perturbation of $p$ by $q^{\ve}$, i.e. we let
\begin{align*}  
p^\ve_0(t,x,y) &= p(t,x, y),\\
p^\ve_{n}(t,x,y) & = \int_0^t \int_{\Rd}p^\ve_{n-1}(s,x,z) q^{\ve} (z) p(t-s,z, y) d z d s, \quad n=1,2,\ldots,\\
\tilde p^{\ve} (t,x,y) &= \sum_{n=0}^{\infty }p^{\ve}_n(t,x,y).
\end{align*}
By induction,
\begin{equation*}
  0\le p^\ve_n(t,x,y) \leq p(t,x, y) (t/\ve)^n/n!,
\end{equation*} 
hence, uniformly in $x, y\in \Rd$,
\begin{align*}
  0\le p^\ast(t,x,y) &:= \tilde p^{\ve}(t,x,y) - p(t,x, y) - p^\ve_1(t,x,y) \\
  &\leq p(t,x, y) \left( e^{\frac{t}{\ve}} - 1 - \tfrac{t}{\ve}\right) = p(t,x, y) O(t^2) \quad \text{as}\quad t\to 0.
\end{align*}
We denote 
$P^\ast_t f(x) = \int_{\Rd}p^*(t,x,y) f(y) d y$ and we get
\begin{equation*}
  \tfrac{1}{t} \left|(P_t^\ast f,f)\right| \leq c \tfrac{1}{t} t^2 \|f\|_{L^2}^2 \to 0 \quad \text{as}\quad t \to 0.
\end{equation*}
For $f \in L^2(\Rd)$, we  let
$
  P^{(1,\ve)}_t f(x) = \int_{\Rd}p^\ve_1(t,x, y) f(y) d y,
$
and we obtain
\begin{align*}
  \tfrac{1}{t}(P^{(1,\ve)}_t f,f) & = \tfrac{1}{t}\int_{\Rd}\int_{\Rd}\int_0^t \int_{\Rd} p(s,x,z)q^{\ve}(z)p(t-s,z,y)f(y)f(x) d z d s d y d x\\
&= \tfrac{1}{t} \int_{\Rd} \int_0^t P_sf(z) q^{\ve}(z) P_{t-s}f(z) d s d z. 
\end{align*}
Since $\{P_t\}_{t\geq 0}$ is strongly continuous semigroup of contraction on  $L^2(\Rd)$,  the above converges to $\int_{\Rd}f^2(z) q^{\ve} (z) d z$ as $t\to 0$.
 We have that $\tilde P^\ve_t f(x)= \int_{\Rd}\tilde p^\ve(t,x, y) f(y) d y$ is a symmetric strongly continuous semigroup on $L^2(\Rd)$. Each $\tilde P_t^\ve$ is contractive because it is smaller than $\tilde P_t$ on nonnegative functions.
 
We note that
$\tilde {\mathcal{E}}_t^{\ve}[f]:= \tfrac{1}{t} (f -\tilde P^\ve_t f,f)\to \tilde {\mathcal{E}}^{\ve}[f]:=\sup_s \tilde {\mathcal{E}}^{\ve}_s[f]$ as $t\to 0$. Therefore,
\begin{equation*}
   \tilde{\mathcal{E}}^{\ve} [f] = \mathcal{E}[f] - \int_{\Rd}f^2(z) q^{\ve} (z) d z, \quad f\in L^2(\Rd).
\end{equation*}  
If $L^2(\Rd)\ni f\geq 0$, then $\tilde P_t f\geq \tilde P^\ve_t f$, so $\tilde {\mathcal{E}}_t [f] \leq \tilde {\mathcal{E}}^\ve_t [f]$, and
\begin{equation*}
  \tilde{\mathcal{E}}[f]  +\int_{\Rd} f^2(z) q(z) d z \leq \mathcal{E} [f].
\end{equation*}
By \eqref{eq:1.5}, we conclude that for nonnegative $f\in L^2(\R^d)$, 
\begin{equation*}
\tilde{\mathcal{E}}[f]+\int_{\Rd} f^2(z) q(z) d z = \mathcal{E} [f] = \overline{\mathcal{E}}[f]+\int_{\Rd} f^2(z) q(z) d z.
\end{equation*}
For general signed $f\in\mathcal{D}(\mathcal{E})$ we have $f_+,\,f_-\in \mathcal{D}(\mathcal{E})$. Since $\mathcal{D}(\mathcal{E})$, $\mathcal{D}(\tilde{\mathcal{E}})$ are  linear spaces, we obtain that $\mathcal{D}(\mathcal{E})\subset \mathcal{D}(\tilde{\mathcal{E}})$.
We	 will extend \eqref{eq:2} to $f$ with variable sign. Recall that by the general theory \cite[Chapter~1]{MR2778606},
\begin{equation*}
  \tilde{\mathcal{E}}(f,g) = \lim_{t\to 0^+} \tfrac{1}{t} (f - \tilde P_t f,g)
\end{equation*}
exists for general signed $f,g\in \mathcal{D}(\tilde{\mathcal{E}})$, i.e.  if $  \tilde{\mathcal{E}}[f]<\infty$ and $\tilde{\mathcal{E}}[g]<\infty$. Furthermore,
\begin{equation*}
  \tilde{\mathcal{E}}(f,g) = \tfrac{1}{2}\left( \tilde{\mathcal{E}}[f+g] - \tilde{\mathcal{E}}[f] - \tilde{\mathcal{E}}[g]\right).
\end{equation*}
Let $f,g \geq 0$ and $f,g\in\mathcal{D}(\mathcal{E})$. By Hardy inequality and \rf{eq:2},
\begin{align*}
   \tilde{\mathcal{E}}(f,g) &= \tfrac{1}{2}\Bigg[ \mathcal{E}[f+g] - \int_{\bbfR^d}(f(x)+g(x))^2 q (x) d x - \mathcal{E}[f] \\
&+ \int_{\bbfR^d}f^2(x) q(x) d x - \mathcal{E}[g] + \int_{\bbfR^d} g^2(x) q(x)d x \Bigg] \\
   & = \mathcal{E}(f,g) - \int_{\bbfR^d} f(x)g(x) q(x) d x.
\end{align*}
Since $\mathcal{D}(\mathcal{E})\subset \mathcal{D}(\tilde{\mathcal{E}})$, it follows that for arbitrary (signed) $f\in \mathcal{D}(\mathcal{E})$, 
\begin{align*}
   \tilde{\mathcal{E}}[f] &= \tilde{\mathcal{E}}[f_+] + \tilde{\mathcal{E}}[f_-] - 2 \tilde{\mathcal{E}}(f_+,f_-) \\
   &= \mathcal{E}[f_+] - \int_{\bbfR^d} f_+^2(x) q(x) d x + \mathcal{E}[f_-] - \int_{\bbfR^d} f_-^2(x) q(x) d x - 2\mathcal{E}(f_+,f_-) \\
&+ 2 \int_{\bbfR^d} f_+(x)f_-(x) q(x) d x= \mathcal{E}(f_+-f_-, f_+ -f_-) - \int_{\bbfR^d} (f_+(x) -f_-(x))^2 q(x) d x\\
   &= \mathcal{E}[f] - \int_{\bbfR^d} f^2(x) q(x) d x =\overline{\mathcal{E}}[f],
\end{align*}
where in the last line we used \eqref{eq:1.5}
\end{proof}

We will be concerned with approximating $\mathcal{D}(\overline{\mathcal{E}})$ by smooth functions. Let $\rho\in C^\infty_c(\Rd)$ be a radial and radially nonincreasing nonnegative function such that $\mathrm{supp}\,\rho =
\overline{B(0,1)}$ and $\int_{\Rd}\rho(x) dx = 1$. For $\varepsilon > 0$, set $\rho_\varepsilon(x)=\varepsilon^{-d}\rho(x/\varepsilon)$.
\begin{lemma}\label{lem:den1}There exists $c=c(d,\delta)$ such that  
$$\overline{\mathcal{E}}[h(\rho_\varepsilon*f)]\leq c\rho(0)\, \overline{\mathcal{E}}[hf],\quad f\in L^1_{loc}(\Rd).$$
\end{lemma}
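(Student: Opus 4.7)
The plan is to exploit the explicit formula
\[
\overline{\mathcal{E}}[hu]=\tfrac12\int_{\R^d}\int_{\R^d}[u(x)-u(y)]^2\,h(x)h(y)\,\nu(y-x)\,dy\,dx,
\]
which follows from \eqref{eq:Ezk} applied to $f=hu$. Setting $u=\rho_\varepsilon*f$, I first use Jensen's inequality (since $\rho_\varepsilon$ is a probability density) to write
\[
[(\rho_\varepsilon*f)(x)-(\rho_\varepsilon*f)(y)]^2\le\int_{\R^d}\rho_\varepsilon(z)[f(x-z)-f(y-z)]^2\,dz,
\]
and then translate the spatial variables $x\mapsto x+z$, $y\mapsto y+z$ (note that $\nu(y-x)$ is translation invariant). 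This yields
\[
\overline{\mathcal{E}}[h(\rho_\varepsilon*f)]\le\tfrac12\int_{\R^d}\int_{\R^d}[f(x)-f(y)]^2\,K_\varepsilon(x,y)\,\nu(y-x)\,dy\,dx,
\]
where $K_\varepsilon(x,y):=\int_{\R^d}\rho_\varepsilon(z)\,h(x+z)h(y+z)\,dz$.

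The whole lemma therefore reduces to the pointwise bound
\begin{equation}\label{eq:keybound}
K_\varepsilon(x,y)\le c\,\rho(0)\,h(x)h(y),\qquad x,y\in\R^d\setminus\{0\},
\end{equation}
with $c$ depending only on $d$ and $\delta$. To prove \eqref{eq:keybound} I would use $\rho_\varepsilon(z)\le\rho(0)\varepsilon^{-d}\mathbf{1}_{B(0,\varepsilon)}(z)$ and apply the Cauchy--Schwarz inequality to reduce the estimate to
\[
\frac{1}{\varepsilon^d}\int_{B(0,\varepsilon)}|x+z|^{-2\delta}\,dz\le c\,|x|^{-2\delta},
\]
i.e.\ that the average of $|\cdot|^{-2\delta}$ over $B(x,\varepsilon)$ is comparable to $|x|^{-2\delta}$. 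Since $2\delta\le d-\alpha<d$, this is a standard Riesz-potential estimate: for $|x|\ge 2\varepsilon$ one has $|x+z|\ge|x|/2$ on $B(0,\varepsilon)$, and for $|x|<2\varepsilon$ one bounds the integral by the integral over $B(0,3\varepsilon)$ and notes that $|x|^{-2\delta}\ge(2\varepsilon)^{-2\delta}$, so both give the same order $\varepsilon^{d-2\delta}$.

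The main (and essentially only) obstacle is the averaging bound \eqref{eq:keybound}; the Jensen step and the change of variables are routine and do not require any integrability assumption beyond $f\in L^1_{\rm loc}(\R^d)$, which guarantees that $\rho_\varepsilon*f$ is well defined pointwise. Putting the four steps together yields
\[
\overline{\mathcal{E}}[h(\rho_\varepsilon*f)]\le c\,\rho(0)\cdot\tfrac12\int_{\R^d}\int_{\R^d}[f(x)-f(y)]^2 h(x)h(y)\,\nu(y-x)\,dy\,dx=c\,\rho(0)\,\overline{\mathcal{E}}[hf],
\]
as required, with $c=c(d,\delta)$.
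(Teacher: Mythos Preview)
Your proof is correct and follows the same overall strategy as the paper: apply Jensen's inequality to the convolution, translate the spatial variables using the translation invariance of $\nu$, and then establish the pointwise bound \eqref{eq:keybound} on the averaged weight. The paper's $\mathrm{I}(x,y)=\int h(x-z)h(y-z)\rho_\varepsilon(z)\,dz$ is exactly your $K_\varepsilon(x,y)$ after the harmless substitution $z\mapsto -z$.

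The one genuine difference is in how \eqref{eq:keybound} is proved. The paper argues directly by a three-case analysis in $(|x|,|y|)$ relative to $\varepsilon$ (assuming $|x|\le|y|$) and in the case $|x|,|y|<2\varepsilon$ invokes the rearrangement inequality to control $\int_{B(0,\varepsilon)} h(x-z)h(y-z)\,dz$. Your route is cleaner: Cauchy--Schwarz with respect to $\rho_\varepsilon(z)\,dz$ decouples $x$ and $y$, reducing everything to the single-variable estimate $\varepsilon^{-d}\int_{B(0,\varepsilon)}|x+z|^{-2\delta}\,dz\le c\,|x|^{-2\delta}$, which you then handle by the standard two-case split $|x|\ge 2\varepsilon$ versus $|x|<2\varepsilon$. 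This avoids the rearrangement inequality entirely; note that the integrability at the origin uses $2\delta\le d-\alpha<d$, exactly as in the paper. Both arguments yield a constant depending only on $d$ and $\delta$.
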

\begin{proof}
By Jensen's inequality, 
\begin{align*}
2\overline{\mathcal{E}}[h(\rho_\varepsilon*f)]&=\int\int\left(\int(f(x+z)-f(y+z))\rho_\varepsilon(z)dz\right)^2\nu(y-x)h(x)h(y)dxdy\\
&\leq \int \int\int(f(x+z)-f(y+z))^2\nu(y-x)h(x)h(y)dxdy\rho_\varepsilon(z)dz\\
&= \int \int(f(x)-f(y))^2\nu(y-x)\int h(x-z)h(y-z)\rho_\varepsilon(z)dzdxdy.
\end{align*}
Let 
$$
\mathrm{I}(x,y)= \int h(x-z)h(y-z)\rho_\varepsilon(z)dz.
$$
Note that $\supp \rho_\eps = \overline{B(0,\eps)}$ and $\rho_\eps (z) \le \eps^{-d} \rho(0)$. Denote $\omega_d = |B(0,1)|$.
Let $|x|\leq |y|$. If $|x|\geq 2\varepsilon $, then $\mathrm{I}(x,y)\leq 2^{2\delta} \omega_d \rho(0) h(x)h(y) $. For $|x|<2\varepsilon\leq |y|$, by the monotonicity of $h$ and $\rho$ we get
$$\mathrm{I}(x,y)\leq  2^{\delta}h(y)\varepsilon^{-d}\rho(0)\int_{B(x,3\varepsilon)}h(x-z)dz  =  2^{\delta}3^{d-\delta}\frac{\omega_d}{d-\delta} h(y)\varepsilon^{-\delta}\rho(0)\leq c\rho(0)h(y)h(x),$$
where $c= 2^{2\delta}3^{d-\delta}\omega_d/(d-\delta)$. If $|y|<2\varepsilon$, then by the rearrangement inequality \cite[Theorem 3.4]{MR1817225},
$$\mathrm{I}(x,y)\leq \varepsilon^{-d}\rho(0) \int_{B(0,3\varepsilon)}h^2(z)dz = \rho(0)3^{d-2\delta}\frac{\omega_d}{d-2\delta}  \varepsilon^{-2\delta} \leq c\rho(0)h(x)h(y),$$
where $c=2^{2\delta}3^{d-2\delta}\omega_d/(d-2\delta)$. Hence,
\begin{equation}\label{I_est}
\mathrm{I}(x,y)\leq c\rho(0) h(x)h(y),\quad x,y\in \Rd,
\end{equation}
where $c=c(d,\delta)$. The proof is complete.
\end{proof}

\begin{lemma}\label{lem:den2}Let $\chi\in C^\infty(\Rd)$ and $\textbf{1}_{B^c_{1}}\leq\chi\leq\textbf{1}_{B^c_{1/2}}$.
For all $f\in C^\infty_c(\Rd)$ and $\varepsilon\in(0,1)$,
$$\overline{\mathcal{E}}[h(f\chi(\cdot/\varepsilon))]\leq 2\overline{\mathcal{E}}[hf] + 
c(d,\alpha)\left[\left(\|\nabla \chi\|_\infty\|f\|_\infty+\|\nabla f\|_\infty\right)^2 + \|f\|_\infty^2\right].$$
\end{lemma}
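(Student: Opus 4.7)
The plan is to decompose $f\chi(\cdot/\varepsilon)=f-f\psi_\varepsilon$ where $\psi_\varepsilon(x):=1-\chi(x/\varepsilon)$. Since $\chi\geq\mathbf{1}_{B_1^c}$, the function $\psi_\varepsilon$ is supported in $\overline{B_\varepsilon}$, with $\|\psi_\varepsilon\|_\infty\leq 1$ and $\|\nabla\psi_\varepsilon\|_\infty\leq \|\nabla\chi\|_\infty/\varepsilon$. Directly from \eqref{eq:Ezk}, the functional $g\mapsto \overline{\mathcal{E}}[hg]^{1/2}$ is a seminorm in $g$, so the triangle inequality together with $(a+b)^2\leq 2(a^2+b^2)$ gives
\[
\overline{\mathcal{E}}[h(f\chi(\cdot/\varepsilon))] \;\leq\; 2\,\overline{\mathcal{E}}[hf]\;+\;2\,\overline{\mathcal{E}}[hf\psi_\varepsilon].
\]
It thus suffices to bound $\overline{\mathcal{E}}[hf\psi_\varepsilon]$ by $\tfrac{1}{2}c(d,\alpha)\bigl[A^2+\|f\|_\infty^2\bigr]$ with $A:=\|\nabla\chi\|_\infty\|f\|_\infty+\|\nabla f\|_\infty$.

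The function $g:=f\psi_\varepsilon$ satisfies (i) $\supp g\subseteq\overline{B_\varepsilon}$; (ii) $\|g\|_\infty\leq\|f\|_\infty$; (iii) its Lipschitz constant is at most $L:=\|\nabla f\|_\infty+\|f\|_\infty\|\nabla\chi\|_\infty/\varepsilon$, hence $\varepsilon L\leq A$ since $\varepsilon\in(0,1)$. In the double integral defining $\overline{\mathcal{E}}[hg]$ I would split according to $\{|x-y|<\varepsilon\}$ and $\{|x-y|\geq\varepsilon\}$. On the near part I use $[g(x)-g(y)]^2\leq L^2|x-y|^2$, and on the far part $[g(x)-g(y)]^2\leq 4\|f\|_\infty^2$. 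In both parts the support condition forces $\min(|x|,|y|)\leq\varepsilon$, hence the other point lies in $B_{2\varepsilon}$ on the near part.

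Passing to the scale $u=x/\varepsilon$, $v=y/\varepsilon$, each contribution acquires a factor $\varepsilon^{2+d-\alpha-2\delta}L^2 = \varepsilon^{d-\alpha-2\delta}(\varepsilon L)^2\leq A^2$ (near part) or $\varepsilon^{d-\alpha-2\delta}\cdot 4\|f\|_\infty^2\leq 4\|f\|_\infty^2$ (far part), using $\delta\leq(d-\alpha)/2$ and $\varepsilon<1$, multiplied by the $\varepsilon$-free integrals
\[
I_1:=\!\!\iint_{\min(|u|,|v|)\leq 1,\,|u-v|<1}\!\!|u-v|^{2-d-\alpha}|u|^{-\delta}|v|^{-\delta}\,du\,dv,\quad I_2:=\!\!\iint_{\min(|u|,|v|)\leq 1,\,|u-v|\geq 1}\!\!|u-v|^{-d-\alpha}|u|^{-\delta}|v|^{-\delta}\,du\,dv.
\]
Both are finite constants depending only on $d,\alpha,\delta$, and since $\delta$ varies continuously over the compact interval $[0,(d-\alpha)/2]$ determined by $d,\alpha$, the dependence collapses to a single constant $c(d,\alpha)$.

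The main technical obstacle is the finiteness of $I_1$ because of the three interacting singularities at $u=0$, $v=0$, and $v=u$. Assuming by symmetry $|u|\leq 1$, I would split the $v$-integration into $\{|v|<|u|/2\}$, $\{|v|\in[|u|/2,2|u|]\}$, and $\{|v|>2|u|\}$, which isolates at most two singularities at a time; each piece is then handled by elementary scale-invariant estimates, using $\alpha<2$ for local integrability of $|u-v|^{2-d-\alpha}$ and $\delta<d/2$ for the two-point weight $|u|^{-\delta}|v|^{-\delta}$. The integral $I_2$ is simpler: $|u-v|\geq 1$ removes the local singularity, and splitting $|v|\leq 2$ versus $|v|>2$ (where $|u-v|\geq|v|/2$) gives finiteness from $\delta<d$ near zero and integrability of $|v|^{-\delta-d-\alpha}$ at infinity.
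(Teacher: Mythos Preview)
Your argument is correct. The ingredients are the same as in the paper---the Lipschitz bound on the cutoff, the scaling $x=\varepsilon u$, $y=\varepsilon v$, and the elementary integrability of $|u|^{-\delta}|v|^{-\delta}|u-v|^{2-d-\alpha}$ over bounded regions---but the organization differs. The paper works directly with $f_\varepsilon=f\chi(\cdot/\varepsilon)$ and decomposes the domain of integration by the \emph{positions} of $x$ and $y$: on $B_\varepsilon^c\times B_\varepsilon^c$ one has $f_\varepsilon=f$ so this region contributes at most $2\overline{\mathcal E}[hf]$; on $B_{2\varepsilon}\times B_{2\varepsilon}$ the Lipschitz bound and scaling give the $(\|\nabla\chi\|_\infty\|f\|_\infty+\|\nabla f\|_\infty)^2$ term; and the mixed region $B_\varepsilon\times B_{2\varepsilon}^c$ gives the $\|f\|_\infty^2$ term. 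You instead first invoke the seminorm structure of $g\mapsto\overline{\mathcal E}[hg]^{1/2}$ to peel off $2\overline{\mathcal E}[hf]$ algebraically, reducing to a compactly supported perturbation $g=f\psi_\varepsilon$, and only then split---this time by the \emph{distance} $|x-y|$ rather than by location. Your route is a bit more modular (the triangle-inequality step cleanly isolates the error term and would transplant to other cutoffs), while the paper's is slightly more direct and avoids the seminorm observation. Both lead to the same scale-free integrals and the same finiteness checks; your remark that the $\delta$-dependence is uniform over $[0,(d-\alpha)/2]$ matches the paper's implicit treatment of the constant as $c(d,\alpha)$.
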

\begin{proof}Denote $f_\eps(x) = f(x) \chi(x/\eps)$ and $A(x,y)=[f_\eps(x)-f_\eps(y)]^2\nu(y-x)h(x)h(x)$.
Since $f_\eps(x)=f(x)$ for $x\in B^c_{\varepsilon}$, 
$$\iint_{B^c_{\varepsilon}\times B^c_{\varepsilon}}A(x,y)dxdy\leq 2\overline{\mathcal{E}}[hf].$$
Observe that
$$|f_\eps(x)-f_\eps(y)|\leq |x-y|(\|\nabla f\|_\infty+ \varepsilon^{-1}\|\nabla\chi\|_\infty\|f\|_\infty).$$
Thus,
\begin{align*}\iint_{B_{2\varepsilon}\times B_{2\varepsilon}}A(x,y)dxdy&\leq \varepsilon^{-2}(\|\nabla f\|_\infty+\|\nabla\chi\|_\infty\|f\|_\infty)^2\iint_{B_{2\varepsilon}\times B_{2\varepsilon}}|x-y|^2\nu(y-x)h(x)h(x)dxdy\\
&=c(d,\alpha)\varepsilon^{d-\alpha}h(\varepsilon)^2(\|\nabla f\|_\infty+\|\nabla\chi\|_\infty\|f\|_\infty)^2\\
&\leq c(d,\alpha)(\|\nabla f\|_\infty+\|\nabla\chi\|_\infty\|f\|_\infty)^2.
\end{align*}
Furthermore,
\begin{align*}\iint_{B_{\varepsilon}\times B^c_{2\varepsilon}}A(x,y)dxdy&\leq 4\|f\|_\infty^2\iint_{B_{\varepsilon}\times B^c_{2\varepsilon}}\nu(y-x)h(x)h(y)dxdy\\
&=c(d,\alpha)\varepsilon^{d-\alpha}h(\varepsilon)^2\|f\|_\infty^2\leq c(d,\alpha)\|f\|_\infty^2,
\end{align*}
which ends the proof.
\end{proof}

\begin{theorem}\label{prop:Hilbert}
$(\mathcal{D}(\tilde{\mathcal{E}}),\sqrt{\tilde{\mathcal{E}}} + \|\cdot\|_{L^2})$ is a Hilbert space and $C^\infty_c(\R^d)$ is dense in $\mathcal{D}(\tilde{\mathcal{E}})$ with the norm $\sqrt{\tilde{\mathcal{E}}} + \|\cdot\|_{L^2}$ . Furthermore, $\tilde{\mathcal{E}}=\overline{\mathcal{E}}$ on $L^2(\Rd)$.
\end{theorem}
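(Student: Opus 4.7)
The plan is to argue in three stages, combining general semigroup theory with the two approximation lemmas already prepared. First I would verify the Hilbert space property. By Proposition~\ref{prop:SCP}, $\{\tilde P_t\}$ is a strongly continuous semigroup of symmetric contractions on $L^2(\Rd)$, so by the standard correspondence (Fukushima, Oshima and Takeda~\cite[Theorem~1.3.1]{MR2778606}) the associated form $\tilde{\mathcal{E}}$ is closed, i.e.\ $(\mathcal{D}(\tilde{\mathcal{E}}), \tilde{\mathcal{E}}_1^{1/2})$ with $\tilde{\mathcal{E}}_1 := \tilde{\mathcal{E}} + \|\cdot\|_{L^2}^2$ is a Hilbert space; the equivalent norm $\sqrt{\tilde{\mathcal{E}}} + \|\cdot\|_{L^2}$ yields the same topology. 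A direct Fatou argument on the defining integral in \eqref{eq:Ezk} shows that $\overline{\mathcal{E}}$ is also lower semicontinuous on $L^2$, hence $(\mathcal{D}(\overline{\mathcal{E}}), \sqrt{\overline{\mathcal{E}}} + \|\cdot\|_{L^2})$ is a Hilbert space as well, a fact used below.

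Second, the crux is to show $C^\infty_c(\Rd)$ is dense in $\mathcal{D}(\overline{\mathcal{E}})$ in its graph norm. Given $f \in \mathcal{D}(\overline{\mathcal{E}})$, put $g := f/h \in L^1_{\mathrm{loc}}(\Rd)$. For $\delta, \eta \in (0,1)$ and $R>1$ let $\psi_{\eta,R}(x) := \chi(\eta x)\,\big(1 - \chi(x/R)\big)$, where $\chi$ is the smooth cutoff of Lemma~\ref{lem:den2}; thus $\psi_{\eta,R}$ is supported in an annulus bounded away from $0$. With $g_\delta := \rho_\delta * g \in C^\infty(\Rd)$, set
\[
   f_{\delta,\eta,R} := h \cdot \psi_{\eta,R} \cdot g_\delta.
\]
For $\delta$ small enough that $\supp(g_\delta) \cap \supp(\psi_{\eta,R})$ avoids $\{0\}$, $f_{\delta,\eta,R} \in C^\infty_c(\Rd)$ because $h$ is smooth away from the origin. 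Lemma~\ref{lem:den1} controls $\overline{\mathcal{E}}[h g_\delta] \le c \rho(0) \overline{\mathcal{E}}[f]$, and two applications of Lemma~\ref{lem:den2} (inserted at the scales $\eta$ and $R$, for a smooth and bounded representative of $h g_\delta$ restricted to the relevant annulus) bound $\overline{\mathcal{E}}[f_{\delta,\eta,R}]$ uniformly in $\delta, \eta, R$ by $\overline{\mathcal{E}}[f]$ plus lower-order sup-norm terms.

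Third, I would handle convergence: as $\delta \to 0$ with $\eta, R$ fixed, $f_{\delta,\eta,R} \to \psi_{\eta,R} f$ in $L^2$, using that $h \psi_{\eta,R}$ is bounded with compact support in $\Rd \setminus \{0\}$ and that mollification converges in $L^2_{\mathrm{loc}}$; then $\psi_{\eta,R} f \to f$ in $L^2$ as $\eta \to 0$, $R \to \infty$ by dominated convergence. A diagonal choice yields a sequence $\{f_n\} \subset C^\infty_c(\Rd)$ with $f_n \to f$ in $L^2$ and $\sup_n \overline{\mathcal{E}}[f_n] < \infty$. Since $\mathcal{D}(\overline{\mathcal{E}})$ is a Hilbert space, a bounded sequence has a weakly convergent subsequence, and Mazur's theorem provides convex combinations $\tilde f_n$ of the $f_n$ that converge in the $\overline{\mathcal{E}}$-graph norm; the $L^2$ limit identifies the strong limit as $f$. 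This proves density of $C^\infty_c(\Rd)$ in $\mathcal{D}(\overline{\mathcal{E}})$.

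The remaining conclusions drop out. Given $f \in \mathcal{D}(\overline{\mathcal{E}})$ pick $f_n \in C^\infty_c$ converging to $f$ in the $\overline{\mathcal{E}}$-graph norm. Since $C^\infty_c \subset \mathcal{D}(\mathcal{E})$, Lemma~\ref{lem:form_domain} gives $\tilde{\mathcal{E}}[f_n - f_m] = \overline{\mathcal{E}}[f_n - f_m] \to 0$, so $\{f_n\}$ is Cauchy in $\mathcal{D}(\tilde{\mathcal{E}})$; by completeness $f_n \to \tilde f$ in $\mathcal{D}(\tilde{\mathcal{E}})$ and $\tilde f = f$ by $L^2$-convergence. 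Hence $f \in \mathcal{D}(\tilde{\mathcal{E}})$ and $\tilde{\mathcal{E}}[f] = \lim \tilde{\mathcal{E}}[f_n] = \lim \overline{\mathcal{E}}[f_n] = \overline{\mathcal{E}}[f]$. Combined with $\mathcal{D}(\tilde{\mathcal{E}}) \subset \mathcal{D}(\overline{\mathcal{E}})$ from Lemma~\ref{lem:form_domain}, this yields $\mathcal{D}(\tilde{\mathcal{E}}) = \mathcal{D}(\overline{\mathcal{E}})$ and $\tilde{\mathcal{E}} = \overline{\mathcal{E}}$ on $L^2(\Rd)$; density of $C^\infty_c$ in $\mathcal{D}(\tilde{\mathcal{E}})$ follows because the two Hilbert spaces and their graph norms coincide. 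The main obstacle I expect is the third stage: ensuring form convergence of the approximation rather than only $L^2$-convergence with bounded form. The Banach--Saks/Mazur device sidesteps a direct proof of convergence for the double integral \eqref{eq:Ezk}, which would otherwise require delicate uniform estimates on the difference of weighted integrands near the singularity.
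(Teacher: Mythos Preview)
Your overall strategy—completeness from semigroup theory, then density of $C^\infty_c$ in $\mathcal{D}(\overline{\mathcal{E}})$ via mollification and cutoffs followed by Banach--Saks/Mazur, then transfer to $\mathcal{D}(\tilde{\mathcal{E}})$ via Lemma~\ref{lem:form_domain}—matches the paper's. The gap is in your stage two, where the appeal to ``two applications of Lemma~\ref{lem:den2}'' does not do what you need.

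Lemma~\ref{lem:den2} only controls \emph{inner} cutoffs: it bounds $\overline{\mathcal{E}}[h(f\chi(\cdot/\varepsilon))]$ for $f\in C^\infty_c(\Rd)$ and $\varepsilon\in(0,1)$, with an error involving $\|f\|_\infty$ and $\|\nabla f\|_\infty$. It says nothing about the outer factor $1-\chi(\cdot/R)$, and it requires its input to be compactly supported. After mollification, $g_\delta=\rho_\delta*g$ is smooth but need not be compactly supported (you only know $g=f/h\in L^1_{\mathrm{loc}}$), so you cannot feed it directly into Lemma~\ref{lem:den2}. Even if you first multiply by the outer cutoff, the error terms $\|g_\delta(1-\chi(\cdot/R))\|_\infty$ and $\|\nabla(g_\delta(1-\chi(\cdot/R)))\|_\infty$ are not bounded uniformly in $\delta$ and $R$, so the claimed uniform bound on $\overline{\mathcal{E}}[f_{\delta,\eta,R}]$ fails, and with it the single diagonal-plus-Mazur step.

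The paper handles the outer truncation with a different tool. After mollifying (Lemma~\ref{lem:den1} plus Banach--Saks) one reduces to $f\in C^\infty(\Rd)\cap\mathcal{D}(\mathcal{Q})$. Splitting $f=f\chi_1+f(1-\chi_1)$ with a fixed cutoff $\chi_1$ near $0$, the piece $f_2=f(1-\chi_1)$ is supported in $B_1^c$, where $h$ is bounded above and below; hence $hf_2\in\mathcal{D}(\mathcal{E})$, and the \emph{regularity of the standard Dirichlet form} $\mathcal{E}$ supplies $g_n\in C^\infty_c(\Rd\setminus B_{1/2})$ with $g_n\to hf_2$ in $\mathcal{E}$-norm. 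Then $w_n=g_n/h\in C^\infty_c$ and $f_1+w_n\in C^\infty_c(\Rd)$ converges to $f$ in $\mathcal{D}(\mathcal{Q})$. Only afterwards is Lemma~\ref{lem:den2} invoked, in a second Banach--Saks step, to pass from $C^\infty_c(\Rd)$ to $C^\infty_c(\Rd\setminus\{0\})$; at that point the input is already in $C^\infty_c$, so the sup-norm hypotheses are met. Your argument can be repaired by inserting this use of the regularity of $\mathcal{E}$ for the truncation at infinity; Lemma~\ref{lem:den2} alone does not cover that step.
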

\begin{proof}
 By Proposition \ref{prop:SCP} and  \cite[Lemma 1.3.2 and 1.3.4 and Theorem 1.3.1]{MR2778606} we obtain that
$(\tilde{\mathcal{E}},\mathcal{D}(\tilde{\mathcal{E}}))$ is closed. This means that $(\mathcal{D}(\tilde{\mathcal{E}}),\sqrt{\tilde{\mathcal{E}}} + \|\cdot\|_{L^2})$ is a Hilbert space.

We next consider $\overline{\mathcal{E}}$ defined by \eqref{eq:Ezk}.
Let $f_n$ be a Cauchy sequence in $(\mathcal{D}(\overline{\mathcal{E}}),\sqrt{\overline{\mathcal{E}}} + \|\cdot\|_{L^2})$. It is bounded in this norm and it is  a Cauchy sequence in $L^2(\Rd)$, hence it converges to $f$ in $L^2(\Rd)$.
Furthermore, there exists a subsequence $f_{n_k}$ such that it converges pointwise to $f$. Using Fatou's lemma we get
$$\overline{\mathcal{E}}[f]\leq \liminf_{k\to \infty}\overline{\mathcal{E}}[f_{n_k}]<\infty,$$
hence $f\in \mathcal{D}(\overline{\mathcal{E}})$. Since $f_n$ is a Cauchy sequence, by Fatou's lemma,
\begin{equation*}
  \lim_{n\to\infty}\overline{\mathcal{E}}[f-f_n]\leq \lim_{n\to\infty}\liminf_{k\to \infty}\overline{\mathcal{E}}[f_{n_k}-f_{n}]=0,
\end{equation*}
so $f_n$ converges to $f$ in $(\mathcal{D}(\overline{\mathcal{E}}),\sqrt{\overline{\mathcal{E}}} + \|\cdot\|_{L^2})$. Thus, $(\mathcal{D}(\overline{\mathcal{E}}),\sqrt{\overline{\mathcal{E}}} + \|\cdot\|_{L^2})$ is a Hilbert space.

We next prove that $C^\infty_c(\Rd)$ is dense in  $\mathcal{D}(\overline{\mathcal{E}})$ with the norm $\sqrt{\overline{\mathcal{E}}} + \|\cdot\|_{L^2}$. 
Let
$$\mathcal{Q}[f]=\overline{\mathcal{E}}[hf], \; f\in L^2(\Rd,h^2(x)dx)\quad \text{ and } \quad
\mathcal{D}(\mathcal{Q})=\frac1h\mathcal{D}(\overline{\mathcal{E}})=\{f/h: f\in \mathcal{D}(\overline{\mathcal{E}})\}.$$
Thus, $\mathcal{D}(\overline{\mathcal{E}})=h\mathcal{D}(\mathcal{Q})$.
Let $f\in \mathcal{D}(\mathcal{Q})$.
By Lemma~\ref{lem:den1}
the sequence $f_n:=\rho_{1/n}*f$ is bounded in the norm $\sqrt{\mathcal{Q}} + \|\cdot\|_{L^2(h^2(x)dx)}$. Indeed, by Jensen's inequality, Fubini's theorem and \rf{I_est}, 
\begin{align*}
\|f_n\|^2_{L^2(h^2(x)dx)}&=\|\rho_{1/n}*f\|^2_{L^2(h^2(x)dx)} \leq \int_{\Rd}\int_{\Rd}\rho_{1/n}(x-y)f^2(y)h^2(x) d x d y \\
&\leq c\rho(0)\int_{\Rd}f^2(y) h^2(y) d y =  c\rho(0)\|f\|^2_{L^2(h^2(x)dx)}.
\end{align*}
Since $\mathcal{D}(\mathcal{Q})$ is a Hilbert space, by the Banach-Saks theorem there exists a subsequence $\{f_{n_k}\}$ and $g\in \mathcal{D}(\mathcal{Q})$ such that $$\frac{1}{k}\sum^k_{m=1}f_{n_m}\to g.$$ 
Since $f_n\to f$ in $L^2(h^2(x)dx)$, $g=f$. Thus, $C^\infty(\Rd)\cap \mathcal{D}(\mathcal{Q})$ is dense in $ \mathcal{D}(\mathcal{Q})$. 

Now, fix $f\in C^\infty(\Rd)\cap \mathcal{D}(\mathcal{Q})$. Let $\chi_1\in C^\infty_c(\Rd)$ be such that $\textbf{1}_{B_1}\leq\chi_1\leq \textbf{1}_{B_2}$. Denote $f_1=f\chi _{1}$ and $f_2=f-f_1$. Then, $f=f\chi _{1} + f(1-\chi _{1})=f_1+f_2$.
 By the proof of Lemma \ref{lem:den2}, $C^\infty_c(\Rd)\subset \mathcal{D}(\mathcal{Q})$. Hence $f_1\in \mathcal{D}(\mathcal{Q})$ and so $f_2 \in \mathcal{D}(\mathcal{Q})$. Since $g=hf_2 \in \mathcal{D}(\overline{\mathcal{E}})$ and 
\begin{equation*}
  \int_{\Rd} g^2(x) q(x) d x \leq \kappa\int_{B_1^c}f^2(x)h^2(x) d x <\infty,
\end{equation*}
cf. \eqref{eq:dk}, we get $g\in \mathcal{D}(\mathcal{E})$ by \cite[Proposition~5]{MR3460023}. Since $\mathcal{E}$ is a regular Dirichlet form (see, e.g., \cite[Theorem 2.4]{MR2888033}), there is a sequence $g_n\in C_c^\infty (\Rd \setminus B_{1/2})$ which converges to $g$ in $\sqrt{{\mathcal{E}}} + \|\cdot\|_{L^2}$  as $n\to \infty$. Hence, $w_n:=g_n/h\in C_c^\infty (\Rd \setminus B_{1/2})$ converges  to $f_2$ 
in the norm $\sqrt{\mathcal{Q}} +\|\cdot\|_{L^2(h^2(x)d x)}$ as $n\to \infty$. Finally, a sequence $f_1+w_n\in C_c^\infty (\Rd)$ converges in $\mathcal{D}(\mathcal{Q})$ to $f$ as $n\rightarrow \infty$. This means that $C^\infty_c(\Rd)$ is dense in $\mathcal{D}(\mathcal{Q})$ with respect to the norm $\sqrt{\mathcal{Q}} +\|\cdot\|_{L^2(h^2(x)d x)}$.

Using Lemma \ref{lem:den2} and similar arguments as for the density of $C^\infty(\Rd)\cap \mathcal{D}(\mathcal{Q})$  in $\mathcal{D}(\mathcal{Q})$ we see that $C^\infty_c(\Rd\setminus\{0\})$ is dense in $C^\infty_c(\Rd)$ in the norm $\sqrt{\mathcal{Q}} + \|\cdot\|_{L^2(h^2(x)dx)}$, hence also in $ \mathcal{D}(\mathcal{Q})$. Since $hf\in C^\infty_c(\Rd\setminus\{0\})$ for $f\in C^\infty_c(\Rd\setminus\{0\})$, we obtain that $C^\infty_c(\Rd\setminus\{0\})$ is dense in $\mathcal{D}(\overline{\mathcal{E}})$.
By Lemma \ref{lem:form_domain}, $\tilde{\mathcal{E}}$ and $\overline{\mathcal{E}}$ coincide on $C^\infty_c(\Rd\setminus\{0\})$. Since $\mathcal{D}(\tilde{\mathcal{E}})\subset \mathcal{D}(\overline{\mathcal{E}})$ and both spaces are complete, they are equal.
\end{proof}

\subsection{Davies' method}\label{sec:Dm}

Below we show that the upper bound for $\tp$ in the subcritical and critical cases follows from the near supermedian property of $H$ in Corollary~\ref{cor:sh}, Theorem~\ref{prop:Hilbert},
the Sobolev inequality of Frank, Lieb, Seiringer \cite[ Corollary 2.5]{MR2425175} for  the quadratic form of $\Delta^{\alpha/2}+\kappa^*|x|^{-\alpha}$ on $C^\infty_c(\Rd)$ and, indeed, Davies' method. 

\begin{proof}[Second proof of the upper bound in Theorem~\ref{thm:main}]
It suffices to verify that there is a constant $C= C(\alpha , d)$
such that for all $t>0$ and $a.e.$ $x,y \in \bbfR^d$,
\begin{equation}\label{est_ker_sharp}
   \tilde p(t,x, y) \leq C p(t,x, y)H (t,x) H(t,y).
\end{equation}
We first apply \cite[Theorem A]{MR2064932} with $\varphi_s(x)=H(s,x)$
to secure a constant $C=C(d,\alpha)>0$ such that, for all $t>0$ and a.e $x,y\in \Rd$,
\begin{equation}\label{est_ker}
   \tilde p(t,x, y) \leq C t^{-\frac{2d-\alpha}{\alpha}}H (t,x) H(t,y).
\end{equation}
Due to Theorem~\ref{prop:Hilbert} we only have to check assumptions (1)-(4) in \cite[Theorem A]{MR2064932}. Of these, (4) holds with $c_0=1$. Since  $\delta\leq (d-\alpha)/2$ we get that $H(s,\cdot),1/H(s,\cdot)\in L^2_{loc}(\Rd)$, hence (2) there holds. 
The assumption (3) is a consequence of Corollary \ref{cor:sh}.
Lemma \ref{lem:form_domain} implies that the quadratic form corresponding to $\tilde p$ is equal to
$$\tilde{\mathcal{E}}(f,f)=  \mathcal{E}(f,f) - \int_{\bbfR^d} f^2(x) q(x) d x,\quad f\in C^\infty_c(\Rd).$$
By \cite[Corollary 2.5]{MR2425175} we have 
$$\|f\|^2_{L^q}\leq c (\tilde{\mathcal{E}}(f,f)+\|f\|^2_{L^2}),\quad f\in C^\infty_c(\Rd),$$
for $q=2+\alpha/(d-\alpha)$ and $c=c(d,\alpha)$. By Theorem~\ref{prop:Hilbert},  $C^\infty_c(\R^d)$  is dense in $\mathcal{D}(\tilde{\mathcal{E}})$, so the above Sobolev inequality holds on the whole $\mathcal{D}(\tilde{\mathcal{E}})$. This implies (1) in \cite[Theorem A]{MR2064932} with $j=1+\alpha/(2(d-\alpha))$ there.  By \cite[Theorem A]{MR2064932}, \eqref{est_ker} holds true.
By \eqref{est_ker}, 
$$
\tp(1,x,y) \le c H(1,x)H(1,y), \qquad a.e. \mbox{ for } \,x,y\in\Rd.
$$
Since $p(1,x,y) \ge c/(1+ |x-y|^{d+\alpha})$, 
\begin{align}\label{eq:tp_est_compact10}
	\tp(1,x,y) \le C_R p(1,x,y) H(1,x) H(1,y), \qquad |x|,|y| <2R,
\end{align} 
for all $R>0$. In the remaining case by symmetry we only need to consider $|x| \le |y|$. Let $R = (2\kappa)^{1/\alpha}$ and $|y| \ge 2R$. Note that 
$p(1-s,z,y) \le c p(1,x,y)$ for all $s \in (0,1)$ and $z \in B(0,R)$. By \eqref{eq2:int_tpinfty}
and \eqref{eq:tp_q},
\begin{align*}
	\tp(1,x,y) &\le 2p(1,x,y) + 2\int_0^1 \int_{|z|<R}\tp(s,x,z) q(z) p(1-s,z,y)\,dz\,ds \\
	& \le 2p(1,x,y) + 2c\int_0^1 \int_{|z|<R}\tp(s,x,z) q(z) p(1,x,y)\,dz\,ds \\ 
	& \le C p(1,x,y)H(1,x) \le C p(1,x,y)H(1,x)H(1,y).
\end{align*}
By this and \eqref{eq:tp_est_compact10},
\begin{align*}
	\tp(1,x,y) \le C p(1,x,y)H(1,x)H(1,y) \quad \text{ for almost all } \; x,y \in \bbfR^d.
\end{align*}
By scaling we get \eqref{est_ker_sharp}.
\end{proof}

\end{document}